\journal{Neural Networks}
\newcommand{\ck}[1]{\norm{#1}_{W^{k,\infty}}}
\newcommand{\cki}[1]{\norm{#1}_{W^{k,\infty}(I_i^N)}}
\newcommand{\linfi}[1]{\norm{#1}_{L^{\infty}(I_i^N)}}
\newcommand{\ckunit}[1]{\norm{#1}_{W^{k,\infty}([0,1]^d)}}
\newcommand{\linfunit}[1]{\norm{#1}_{L^{\infty}([0,1]^d)}}
\newcommand{\N}{{\mathbb{N}}}
\newtheorem{theorem}{Theorem}[section]
\newtheorem{remark}[theorem]{Remark}
\newtheorem{definition}[theorem]{Definition}
\newtheorem{lemma}[theorem]{Lemma}
\newtheorem{proposition}[theorem]{Proposition}
\newtheorem{corollary}[theorem]{Corollary}
\begin{document}

\begin{frontmatter}

\title{On the approximation of functions by tanh neural networks}

\author{Tim De Ryck\corref{mycorrespondingauthor}}
\cortext[mycorrespondingauthor]{Corresponding author}
\ead{tim.deryck@sam.math.ethz.com}
\author{Samuel Lanthaler}
\author{Siddhartha Mishra}
\address{Seminar for Applied Mathematics, ETH Z\"urich, R\"amistrasse 101, 8092 Z\"urich, Switzerland}

\begin{abstract}
We derive bounds on the error, in high-order Sobolev norms, incurred in the approximation of Sobolev-regular as well as analytic functions by neural networks with the hyperbolic tangent activation function. These bounds provide explicit estimates on the approximation error with respect to the size of the neural networks. We show that tanh neural networks with only two hidden layers suffice to approximate functions at comparable or better rates than much deeper ReLU neural networks.

\end{abstract}

\begin{keyword}
neural networks \sep tanh \sep function approximation \sep deep learning
\end{keyword}

\end{frontmatter}

\linenumbers

\section{Introduction}
Deep learning, relying on the use of deep artificial neural networks for regression and classification, has been very successful in different contexts in science and engineering in recent years \cite{lecun2015deep}. These include image recognition, natural language understanding, machine translation, game intelligence, robotics, autonomous systems and protein folding. 

Deep learning is also being increasingly used in scientific computing, particularly in the numerical solution of partial differential equations (PDEs). A very incomplete list of examples for the successful use of deep learning in this context includes the solution of high-dimensional linear and semi-linear parabolic partial differential equations \cite{HEJ1,E1} and references therein, the solution of parametric partial differential equations that arise in many-query problems like uncertainty quantification (UQ), PDE constrained optimization and (Bayesian) inverse problems  \cite{opschoor2019exponential,Kuty,PP1,LMR1,LMRP1} and in infinite-dimensional operator learning frameworks \cite{DeepOnets,LMK1,Stu1}. Another avenue for the application of deep neural networks in scientific computing is provided by \emph{physics-informed neural networks} (PINNs) \cite{Lag1,KAR1,KAR2,MM1,MM2}, which serve as replacements for traditional numerical methods for both forward as well as inverse problems for PDEs. 

The question of why deep neural networks are so successful at many diverse tasks in very different fields eludes a definitive answer. A very partial explanation may lie in the fact that artificial neural networks are \emph{universal approximators} i.e., any continuous (even measurable) mapping can be approximated by artificial neural networks to arbitrarily high accuracy \cite{barron1993universal,CY1,HOR1} and references therein. However, such universality results only imply the existence of a (shallow) neural network and do not provide any quantitative information (bounds) on the width of the underlying neural networks. 

The task of quantitatively relating the size and architecture of neural networks to their expressivity i.e., accuracy in approximating functions of a certain hypothesis class, has received considerable attention in the literature in the last few years. A seminal work in the direction is \cite{yarotsky2017error}, where the author derived explicit estimates on the size (width and depth) of a neural network with a ReLU activation function for approximating Lipschitz functions to any given accuracy in the $L^{\infty}$-norm. Expressivity results for such ReLU neural networks in Sobolev norms were presented in \cite{guhring2020error,herrmann2021constructive,opschoor2019exponential} and references therein, see also \cite{li2019better,montanelli2019new,opschoor2020deep,schwabzech2019,yarotsky2018optimal} and references therein for further approximation results for ReLU and related ReQU and RePU activation functions. 

Despite the fact that several quantitative results on the expressivity of neural networks have been obtained in recent years, we highlight some of the lacunae of the current state of the art in this direction, 
\begin{itemize}
    \item Most of the available results are on the expressivity and approximation properties of ReLU neural networks. Although ReLU activations are very common in practical applications of deep learning, there is a large number of areas where other activation functions are employed. One of the most popular activations is the tanh (hyperbolic tangent) activation function and the related sigmoid or logistic function (a scaled and shifted tanh). These activation functions are the basis of heavily used recurrent neural network (RNN) architectures such as LSTM \cite{lstm} and GRU \cite{GRU}. Other areas where smooth activation functions such as tanh are preferred over ReLU is in physics-informed neural networks (PINNs) for solving forward and inverse problems for PDEs \cite{KAR1,KAR2,MM1,MM2} and references therein, and in the use of quasi-random training points \cite{MR1,LMRS1}. Although the approximation abilities of general smooth activation functions have been investigated in \cite{costarelli2013approximation,constantine1996multivariate,guhring2021approximation,ohn2019smooth,pinkus1999approximation,siegel2020approximation}  and references therein, it is fair to say that the level of detail in existing results for the expressivity of ReLU neural networks, is not yet available for tanh neural networks. 
    \item Moreover, most of the approximation results for smooth activation functions, with the exception of the recent paper \cite{guhring2021approximation}, measure error in $L^p$-norms. However, it is essential to measure errors in higher-order Sobolev norms for many applications, such as PINNs where the neural network needs to be differentiated in order to evaluate the underlying PDE residual.
    \item A persistent focus of approximation results for neural networks has been to highlight the role of depth of the neural network, see \cite{poggio2017and} for a review and further references. In particular, {\color{black}{there are several results to the effect}} that very deep neural networks are, in some sense, more expressive than shallower networks, which in turn might explain the superior performance of deep neural networks in many applications {\color{black} \cite{rolnick2017power, lin2017does, bengio2007scaling, bianchini2014complexity}}. The empirical superiority of deep networks over their shallower counterparts has indeed been observed in many applications in computer science. However, in the context of scientific computing, empirical experience has revealed that shallower but wider networks result in superior performance over deep and narrow neural networks, see \cite{LSK1} and references therein. A reason for this observation lies in the fact that deeper networks might be harder to train in the relatively data poor regime of scientific computing. Some theoretical understanding of this deterioration of performance for deeper networks, at least in the context of ReLU networks is provided in \cite{grohs2021proof}.  However, most of the available approximation theory results trade width for depth and there is little theoretical understanding of why relatively shallow networks can perform well in some contexts. 
    \item Most of the available results on expressivity focus on asymptotic approximation rates i.e., the complexity of the network as the approximation error $\epsilon \rightarrow 0$. However, the fundamental question is how large a neural network should be to provide a certain accuracy of this approximation. This requires going beyond asymptotic approximation rates and providing explicit bounds on the underlying constants. {\color{black} Such explicit bounds are available for approximation in Hilbert and $L^p$ spaces with $p<\infty$ \cite{barron1993universal, barron1994approximation,kurkova2008geometric, lavretsky2002geometric, makovoz1996random, kainen2012dependence} and approximation using a non-standard activation function \cite{guliyev2018approximationa,guliyev2018approximationb}, but they remain mostly unavailable for function approximation in $W^{k,\infty}$ spaces with neural networks with widely used activation functions.} 
    \item The approximation error is only one component of the total error of neural networks, with optimization and generalization errors being the other components \cite{MLbook,CS1}. In particular, standard approaches to estimate the generalization error such as covering number estimates \cite{beck2019full} or Rademacher complexity \cite{MLbook} require explicit estimates on the weights of the underlying neural networks, in addition to bounds on their width and depth. Such estimates on weights of the best approximations of functions in the class of neural networks are rarely available in the current literature. 
\end{itemize}
The main objective of this paper is to address some of the afore-mentioned deficiencies in the literature on approximation properties of neural networks. We will focus on the expressivity of neural networks with the very popular tanh activation function and will aim to prove error and complexity bounds in high-order Sobolev norms for such tanh neural networks in approximating functions belonging to Sobolev spaces as well as $C^k$-spaces. We go beyond the usual practice of proving only asymptotic convergence rates and will provide explicit approximation error bounds for explicit network architectures in order to answer the question of \textit{``How large should a neural network be to approximate a specified function to some chosen accuracy $\epsilon>0$?"}. All our results will be for tanh neural networks with at most two hidden layers.  

{\color{black}A key application of our results is on the approximation of analytic functions by tanh neural networks. We will prove that a two hidden layer tanh neural network suffices to approximate an analytic function at an exponential rate, in terms of the network width, even in Sobolev norms.} This result provides an improvement over available results for the approximation of analytic functions by ReLU neural networks \cite{wang2018exponential,opschoor2019exponential,herrmann2021constructive} and also neural networks with smooth activation functions \cite{mhaskar1996neural} and further illustrate the powers of rather shallow tanh networks at approximating smooth functions. Finally, we also derive explicit bounds on the width of the tanh neural networks as well as asymptotic bounds on their weights, thus paving the way for bounds on the generalization error for these neural networks. 

The rest of the paper is organized as follows: in Section \ref{sec:2}, we introduce the notation for the rest of the paper. Our main results, presented in Section \ref{sec:5}, rely on the uniform approximation of polynomials by tanh neural networks, discussed in Section \ref{sec:3}, and on the approximation of a partition of unity, presented in Section \ref{sec:pou}. In Section \ref{sec:6}, we discuss the contents of this paper and distinguish them from other related papers. 

\section{Preliminaries}
\label{sec:2}

We start by providing an overview of all the notation and the definitions that will be used frequently throughout the paper. 

\subsection{Multi-index notation}\label{sec:multi-index}

For $d\in\mathbb{N}$, we call a $d$-tuple of non-negative integers $\alpha \in \N^d_0$ a multi-index. We write $\abs{\alpha} = \sum_{i=1}^d \alpha_i$, $\alpha! = \prod_{i=1}^d \alpha_i!$ and, for $x\in\mathbb{R}^d$, we denote by $x^\alpha =  \prod_{i=1}^d x_i^{\alpha_i}$ the corresponding multinomial. Given two multi-indices $\alpha, \beta\in \N^d_0$, we say that $\alpha \leq \beta$ if, and only if, $\alpha_i\leq \beta_i$ for all $i=1,\dots, d$. For a multi-index $\alpha$, we define the following multinomial coefficient
\begin{equation}
    \binom{\abs{\alpha}}{\alpha} = \frac{\abs{\alpha}!}{\alpha!}, 
\end{equation}
and, given $\alpha \le \beta$, we define a corresponding multinomial coefficient by
\begin{equation}
    \binom{\beta}{\alpha} = \prod_{i=1}^d \binom{\beta_i}{\alpha_i} = \frac{\beta !}{\alpha! (\beta-\alpha)!}.
\end{equation}
{\color{black}For $\Omega\subseteq \mathbb{R}^d$ and a function $f:\Omega\to\mathbb{R}$ we denote by
\begin{equation}
    D^\alpha f= \frac{\partial^{\abs{\alpha}} f}{\partial x_1^{\alpha_1}\cdots \partial x_d^{\alpha_d}}
\end{equation}
the classical or distributional (i.e. weak) derivative of $f$.}
We will frequently encounter the set $P_{n,d} = \{\alpha\in  \mathbb{N}_0^d : \abs{\alpha} = n\}$ (notation as in \cite{moak1990combinatorial}). In particular, we will need estimates on its cardinality. This is the subject of the following lemma. 

\begin{lemma}\label{lem:size-pqn}
Let $n\in\mathbb{N}$, $d\in\mathbb{N}_{\geq 2}$ and let $P_{n,d} = \{\alpha\in  \mathbb{N}_0^d : \abs{\alpha} = n\}$. Then 
\begin{equation}
    \abs{P_{n,d} } = \binom{n+d-1}{n} \leq \sqrt{\pi} \min\{e^{d-1}n^{d-1},e^{n}(d-1)^{n}\} \quad \text{and} \quad \abs{P_{d, d} } \leq 5 ^{d}. 
\end{equation}
\end{lemma}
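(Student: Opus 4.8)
The first claim is the classical ``stars and bars'' count: a multi-index $\alpha\in\N_0^d$ with $\abs{\alpha}=n$ is the same datum as a placement of $n$ indistinguishable balls into $d$ labelled boxes, of which there are $\binom{n+d-1}{d-1}=\binom{n+d-1}{n}$. For the two upper bounds I would exploit the symmetry $\binom{n+d-1}{n}=\binom{n+d-1}{d-1}$ and expand the binomial coefficient as a telescoping product in whichever of the two variables is being isolated.

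For the bound $\sqrt{\pi}\,e^{d-1}n^{d-1}$, I would write
\begin{equation}
\binom{n+d-1}{n}=\binom{n+d-1}{d-1}=\prod_{j=1}^{d-1}\frac{n+j}{j}\le \prod_{j=1}^{d-1}\frac{n(j+1)}{j}=n^{d-1}\prod_{j=1}^{d-1}\frac{j+1}{j}=d\,n^{d-1},
\end{equation}
where the inequality uses $n+j\le n(j+1)$, valid because $n\ge 1$, and the final equality is a telescoping product. Symmetrically, for the bound $\sqrt{\pi}\,e^{n}(d-1)^{n}$ I would write $\binom{n+d-1}{n}=\prod_{j=1}^{n}\frac{(d-1)+j}{j}\le(d-1)^{n}\prod_{j=1}^n\frac{j+1}{j}=(d-1)^{n}(n+1)$, where now $(d-1)+j\le(d-1)(j+1)$ is valid because $d-1\ge 1$. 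It then remains only to absorb the leftover linear factors into the exponentials: from $e^{x}\ge 1+x$ we get $e^{d-1}\ge d$ and $e^{n}\ge n+1$, so that $d\,n^{d-1}\le\sqrt{\pi}\,e^{d-1}n^{d-1}$ and $(d-1)^{n}(n+1)\le\sqrt{\pi}\,e^{n}(d-1)^{n}$; taking the minimum of the two bounds gives the stated inequality.

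For the last estimate I would combine the Pascal-type identity $\binom{2d}{d}=\binom{2d-1}{d-1}+\binom{2d-1}{d}=2\binom{2d-1}{d}$ (using $\binom{2d-1}{d-1}=\binom{2d-1}{d}$) with the central binomial bound $\binom{2d}{d}\le 2^{2d}=4^{d}$, which is immediate from the binomial theorem applied to $(1+1)^{2d}$. Hence $\abs{P_{d,d}}=\binom{2d-1}{d}=\tfrac12\binom{2d}{d}\le\tfrac12\,4^{d}\le 5^{d}$.

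I do not anticipate a genuine obstacle here; the only points requiring a little care are choosing the right variable in which to expand the binomial coefficient so as to get each of the two ``dual'' bounds, and checking that the crude scalar inequalities $d\le\sqrt{\pi}e^{d-1}$ and $n+1\le\sqrt{\pi}e^{n}$ indeed suffice. (If sharper constants were desired, one could instead apply Stirling's formula to the three factorials in $\binom{n+d-1}{n}=\frac{(n+d-1)!}{n!\,(d-1)!}$, at the price of a less transparent argument.)
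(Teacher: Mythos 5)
Your proof is correct, and it takes a genuinely different and more elementary route than the paper. The paper obtains both upper bounds by applying Stirling's approximation to $\binom{n+d-1}{n}$, rewriting the result as $\frac{e}{\sqrt{2}\pi}\bigl(1+\tfrac{n}{d-1}\bigr)^{d-1}\bigl(1+\tfrac{d-1}{n}\bigr)^{n}$ (up to a bounded factor), and then invoking the two inequalities $(1+a/b)^b\le e a^b$ and $(1+a/b)^b\le e^a$; the factor $\sqrt{\pi}$ arises from absorbing the Stirling constant $e^2/(\sqrt{2}\pi)$. You instead expand the binomial coefficient as a telescoping product $\prod_{j}\frac{n+j}{j}$ (respectively $\prod_{j}\frac{(d-1)+j}{j}$) and bound it termwise via $n+j\le n(j+1)$ and $(d-1)+j\le(d-1)(j+1)$, which requires only $n\ge1$ and $d\ge2$ — exactly the stated hypotheses. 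Your argument is fully self-contained (no Stirling) and in fact yields the sharper bounds $d\,n^{d-1}$ and $(n+1)(d-1)^n$, of which the stated $\sqrt{\pi}e^{d-1}n^{d-1}$ and $\sqrt{\pi}e^n(d-1)^n$ are crude consequences via $e^x\ge1+x$. Likewise, your treatment of $\abs{P_{d,d}}=\binom{2d-1}{d}=\tfrac12\binom{2d}{d}\le\tfrac12\cdot4^d$ improves on the paper's $5^d$, which the paper derives by specializing its Stirling estimate to $n=d$. The only thing the paper's approach "buys" in exchange is that the same Stirling computation handles all three claims uniformly; your argument needs a separate (but short) observation for the $\abs{P_{d,d}}$ bound. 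Either proof is acceptable; yours is arguably preferable for transparency and constants.
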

\begin{proof}
It is well known that $\abs{P_{n,d} } = \binom{n+d-1}{n}$. We use Stirling's approximation,
\begin{align*}
     \abs{P_{n,d} } &=  \binom{n+d-1}{n} \leq \frac{e(n+d-1)^{n+d-1/2}}{2\pi n^{n+1/2}(d-1)^{d-1/2}} \\
     &\leq \frac{e }{2\pi} \left(\frac{n+d-1}{d-1}\right)^{d-1}\left(\frac{n+d-1}{n}\right)^{n}\sqrt{\frac{n+d-1}{n(d-1)}}\\
     &\leq \frac{e }{\sqrt{2}\pi} \left(1+\frac{n}{d-1}\right)^{d-1}\left(1+\frac{d-1}{n}\right)^{n}.
\end{align*}
To estimate the last term, we note that there are two possible approximations: for $a,b\geq 1$ it holds that $(1+a/b)^b\leq ea^b$ and also $(1+a/b)^b\leq e^a$. Using the fact that $e^2/\sqrt{2}\pi\leq\sqrt{\pi}$, we obtain
\begin{equation}
    \abs{P_{n,d} } \leq \sqrt{\pi}e^{d-1}n^{d-1} \qquad \text{and}\qquad \abs{P_{n,d} } \leq \sqrt{\pi}e^{n}(d-1)^{n}.
\end{equation}
Setting $n=d$ for $\lambda\in\mathbb{N}$, we also find that
\begin{equation}
    \abs{P_{d,d} } \leq\frac{e }{\sqrt{2}\pi} \left(1+\frac{d}{d-1}\right)^{d}\left(1+\frac{d-1}{d}\right)^{d} \leq \left(3+\frac{d}{d -1}\right)^{d} \leq 5^{d}, 
\end{equation}
since $\frac{e }{\sqrt{2}\pi} \leq 1$ and $\frac{x}{x-1}\leq 2$ for $x\geq 2$. 

\end{proof}

\subsection{Sobolev spaces}\label{sec:sobolev}

Let $d\in\mathbb{N}$, $1\leq p\leq \infty$ and let $\Omega \subseteq \mathbb{R}^d$ be open. We denote by $L^p(\Omega)$ the usual Lebesgue space and for $k\in\mathbb{N}_0$ we define the Sobolev space $W^{k,p}(\Omega)$ as
\begin{equation}
    W^{k,p}(\Omega) = \{f \in L^p(\Omega): D^\alpha f \in L^p(\Omega) \text{ for all } \alpha\in\mathbb{N}^d_0 \text{ with } \abs{\alpha}\leq k\}. 
\end{equation}
For $p<\infty$, we define the following seminorms on $W^{k,p}(\Omega)$, 
\begin{equation}
    \abs{f}_{W^{m,p}(\Omega)} = \left(\sum_{\abs{\alpha}= m}\norm{D^\alpha f}^p_{L^p(\Omega)}\right)^{1/p} \qquad \text{for } m=0,\ldots, k, 
\end{equation}
and for $p=\infty$ we define
\begin{equation}
    \abs{f}_{W^{m,\infty}(\Omega)} =\max_{\abs{\alpha}= m} \norm{D^\alpha f}_{L^\infty(\Omega)}\qquad \qquad \text{for } m=0,\ldots, k. 
\end{equation}
Based on these seminorms, we can define the following norm for $p<\infty$,
\begin{equation}
    \norm{f}_{W^{k,p}(\Omega)} = \left(\sum_{m=0}^k \abs{f}_{W^{m,p}(\Omega)}^p\right)^{1/p}, 
\end{equation}
and for $p=\infty$ we define the norm
\begin{equation}
    \norm{f}_{W^{k,\infty}(\Omega)} =\max_{0\leq m\leq k}  \abs{f}_{W^{m,\infty}(\Omega)}. 
\end{equation}
The space $W^{k,p}(\Omega)$ equipped with the norm $\norm{\cdot}_{W^{k,p}(\Omega)}$ is a Banach space. 
\subsection{Neural networks}\label{sec:neural-networks}

In this paper, we will consider function approximation using feedforward artificial neural networks where only connections between neighbouring layers are allowed. In the following, we formally introduce our definition of a neural network and the related terminology. 

Let $L\in\mathbb{N}$ and $l_0,\ldots, l_L\in\mathbb{N}$. Let $\sigma:\mathbb{R}\to\mathbb{R}$ be an \textit{activation function} and define the parameter space
\begin{equation}
    \Theta= \bigcup_{L\in\mathbb{N}}\bigcup_{l_0,\ldots,l_L\in\mathbb{N}}\bigtimes_{k=1}^L \left(\mathbb{R}^{l_k\times l_{k-1}}\times\mathbb{R}^{l_k}\right). 
\end{equation}
For $\theta\in\Theta$, we define $\theta_k:= (W_k,b_k)$ and $\mathcal{A}_k:\mathbb{R}^{l_{k-1}}\to\mathbb{R}^{l_{k}}:x\mapsto W_k x+b_k$ for $1\leq k\leq L$ and we denote by $\Psi_\theta:\mathbb{R}^{l_0}\to\mathbb{R}^{l_L}$, $x \mapsto \Psi_\theta(x)$, the function
\begin{equation}
    \Psi_\theta(x) = \begin{cases}\mathcal{A}_1(x) & L=1,\\ (\mathcal{A}_L \circ \sigma \circ \mathcal{A}_{L-1} \circ \sigma \circ\cdots \circ \sigma \circ \mathcal{A}_1)(x) & L\geq 2, \end{cases}
\end{equation}
where $\sigma$ is applied element-wise. 
We refer to $\Psi_\theta$ as the realization of the \textit{neural network} associated to the parameter $\theta$ with $L$ layers and widths $(l_0,l_1, \ldots, l_L)$. We refer to the first $L-1$ layers as \textit{hidden layers}. For $1\leq k\leq L$, we say that layer $k$ has width $l_k$ and we refer to $W_k$ and $b_k$ as the \textit{weights and biases} corresponding to layer $k$. {\color{black} The width of $\Psi_\theta$ is defined as $\max(l_0,\dots, l_L)$.} If $L=2$, we say that $\Psi_\theta$ is a \textit{shallow neural network}; if $L\geq 3$, we say that $\Psi_\theta$ is a \textit{deep neural network}. Hence, a shallow neural network has exactly one hidden layer whereas deep neural networks can have two or more hidden layers.  

In this work, we will focus on neural networks which use the hyperbolic tangent as activation function, defined by
\begin{equation}
    \sigma(x) := \tanh(x) = \frac{e^{x}-e^{-x}}{e^{x}+e^{-x}}\quad \text{for } x\in\mathbb{R}. 
\end{equation}
We will refer to these networks as \textit{tanh neural networks}. Even though our ideas can be carried over to other smooth activation functions, focusing on a particular activation will allow us to prove precise and explicit bounds without sacrificing the clarity of our arguments. In particular, we note that all results of this work directly apply to the sigmoid or logistic activation function, which is simply a shifted and scaled version of the hyperbolic tangent.

We close this section by recalling some basic properties of neural network calculus which we will use throughout, without explicitly referring to them. 
\begin{proposition}[Parallelization of neural networks] Let $L\in\mathbb{N}$, $l_0,l_0',\ldots, l_L,l_L'\in\mathbb{N}$ and $\theta, \vartheta\in\Theta$ such that $\Psi_\theta$ is a neural network with widths $(l_0, \ldots, l_L)$ and $\Psi_\vartheta$ is a neural network with widths $(l_0', \ldots, l_L')$. Then there exists $\eta\in\Theta$ such that $\Psi_\eta$ is a neural network with widths $(l_0+l_0', \ldots, l_L+l_L')$ for which it holds that $\Psi_\eta(x) = (\Psi_\theta((x_1, \ldots, x_{l_0})), \Psi_\vartheta((x_{l_0+1}, \ldots, x_{l_0+l_0'})))$ for all $x\in\mathbb{R}^{l_0+l_0'}$. 
\end{proposition}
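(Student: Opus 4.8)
The plan is to build $\eta$ explicitly by stacking the parameters of $\theta$ and $\vartheta$ in block-diagonal form at every layer. Writing $\theta_k = (W_k, b_k)$ and $\vartheta_k = (W_k', b_k')$ for $1 \leq k \leq L$, I would set $\eta_k = (\widetilde{W}_k, \widetilde{b}_k)$ with
\[
\widetilde{W}_k = \begin{pmatrix} W_k & 0 \\ 0 & W_k' \end{pmatrix} \in \mathbb{R}^{(l_k+l_k')\times(l_{k-1}+l_{k-1}')}, \qquad \widetilde{b}_k = \begin{pmatrix} b_k \\ b_k' \end{pmatrix} \in \mathbb{R}^{l_k+l_k'}.
\]
Then $\Psi_\eta$ is by construction a neural network with $L$ layers and widths $(l_0+l_0', \ldots, l_L+l_L')$, and it only remains to verify the stated identity for its realization.

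The main step is an induction over the layer index $k$. Denote by $\mathcal{A}_k$, $\mathcal{A}_k'$ and $\widetilde{\mathcal{A}}_k$ the affine maps attached to $\theta_k$, $\vartheta_k$ and $\eta_k$ respectively. The block-diagonal structure immediately gives $\widetilde{\mathcal{A}}_k(y,z) = (\mathcal{A}_k(y), \mathcal{A}_k'(z))$ for all $y\in\mathbb{R}^{l_{k-1}}$ and $z\in\mathbb{R}^{l_{k-1}'}$, while the fact that $\sigma$ is applied element-wise means $\sigma((y,z)) = (\sigma(y),\sigma(z))$ for any such pair. Feeding these two observations alternately into the composition $\widetilde{\mathcal{A}}_L \circ \sigma \circ \widetilde{\mathcal{A}}_{L-1}\circ\sigma\circ\cdots\circ\sigma\circ\widetilde{\mathcal{A}}_1$ shows, layer by layer, that every intermediate activation of $\Psi_\eta$ is the concatenation of the corresponding intermediate activations of $\Psi_\theta$ and $\Psi_\vartheta$, so that evaluating at $x\in\mathbb{R}^{l_0+l_0'}$ yields $\Psi_\eta(x) = (\Psi_\theta((x_1,\ldots,x_{l_0})), \Psi_\vartheta((x_{l_0+1},\ldots,x_{l_0+l_0'})))$. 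The case $L=1$ is trivial, since then $\Psi_\eta = \widetilde{\mathcal{A}}_1$ and no activation is involved.

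I do not expect any genuine obstacle here; the only points requiring care are keeping track of the dimensions so that the block matrices compose correctly, and treating the base case $L=1$ separately because the definition of $\Psi_\theta$ singles it out. One could equally phrase the argument as a repeated use of this two-network case together with associativity of concatenation, but the direct block-matrix construction above is the cleanest route.
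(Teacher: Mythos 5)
Your construction is correct and complete: the block-diagonal stacking of weight matrices and concatenation of biases, combined with the observation that the element-wise activation respects concatenation, is exactly the standard argument for this fact. The paper states this proposition as a basic property of neural network calculus without supplying a proof, and your block-diagonal construction is precisely the intended one.
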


\begin{proposition}[Composition of neural networks]
 Let $L,L'\in\mathbb{N}$, $l_0,\ldots, l_L=l_0',\ldots, l_{L'}'\in\mathbb{N}$ and $\theta, \vartheta\in\Theta$ such that $\Psi_\theta$ is a neural network with widths $(l_0, \ldots, l_L)$ and $\Psi_\vartheta$ is a neural network with widths $(l_0', \ldots, l_{L'}')$. Then there exists $\eta\in\Theta$ such that $\Psi_\eta$ is a neural network with widths $(l_0, \ldots, l_L=l_0', \ldots, l_{L'}')$ for which it holds that $\Psi_\eta = \Psi_\vartheta \circ \Psi_\theta$. 
\end{proposition}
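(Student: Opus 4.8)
The plan is to construct $\eta$ by hand, concatenating the layerwise data of $\theta$ and $\vartheta$ and fusing the two affine maps that abut at the interface into a single one. Write the layers of $\theta$ as $\theta_k=(W_k,b_k)$ with $\mathcal{A}_k(x)=W_kx+b_k$ for $1\le k\le L$, and the layers of $\vartheta$ as $\vartheta_k=(W_k',b_k')$ with $\mathcal{A}_k'(x)=W_k'x+b_k'$ for $1\le k\le L'$. In every case, whether $L=1$ or $L\ge 2$, the last map applied by $\Psi_\theta$ is the affine map $\mathcal{A}_L\colon\mathbb{R}^{l_{L-1}}\to\mathbb{R}^{l_L}$; likewise the first map applied by $\Psi_\vartheta$ is the affine map $\mathcal{A}_1'\colon\mathbb{R}^{l_0'}\to\mathbb{R}^{l_1'}$. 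Since $l_L=l_0'$, the composition $\mathcal{A}_1'\circ\mathcal{A}_L$ is well defined.

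The first step is the elementary observation that the composition of two affine maps is affine: $\mathcal{A}_1'\circ\mathcal{A}_L$ equals $x\mapsto W_1'W_Lx+W_1'b_L+b_1'$, an affine map $\mathbb{R}^{l_{L-1}}\to\mathbb{R}^{l_1'}$. I then take $\eta\in\Theta$ to be the parameter whose successive layers are $\theta_1,\dots,\theta_{L-1}$, then the single layer $(W_1'W_L,\ W_1'b_L+b_1')$, then $\vartheta_2,\dots,\vartheta_{L'}$; when $L=1$ this list starts with the fused layer and when $L'=1$ it ends with it, so the degenerate cases need no separate treatment. By construction $\Psi_\eta$ is the realization of a neural network whose two layer-chains have been joined at the interface with the affine maps $\mathcal{A}_L$ and $\mathcal{A}_1'$ merged, so it has $(L-1)+(L'-1)$ hidden layers and widths $(l_0,\dots,l_{L-1},l_1',\dots,l_{L'}')$, obtained from the two given width tuples by absorbing the shared junction dimension $l_L=l_0'$.

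It then remains to verify that $\Psi_\eta=\Psi_\vartheta\circ\Psi_\theta$. I would do this by expanding both sides as alternating compositions of affine maps and copies of $\sigma$ and appealing to associativity of composition together with the identity $\mathcal{A}_1'\circ\mathcal{A}_L=(W_1'W_L,\ W_1'b_L+b_1')$ from the previous step: the $L-1$ copies of $\sigma$ carried over from $\Psi_\theta$ and the $L'-1$ copies carried over from $\Psi_\vartheta$ occupy exactly the positions prescribed by the definition of a network realization, and in particular no $\sigma$ sits between the output of $\mathcal{A}_{L-1}$ and the fused layer. I do not expect any genuine obstacle here — the argument is essentially bookkeeping. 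The one point that has to be handled with care, and the reason the fusion is necessary rather than a naive stacking of all $L+L'$ affine maps, is that a network realization must strictly alternate affine maps with the activation $\sigma$, so the two affine maps meeting at the interface cannot both survive as separate layers; this is precisely what collapses the junction dimension and yields the stated architecture.
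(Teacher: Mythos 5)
The paper states this proposition without proof (it is listed among ``basic properties of neural network calculus'' that are recalled and used implicitly), so there is no in-paper argument to compare against. Your construction --- fusing the two affine maps $\mathcal{A}_L$ and $\mathcal{A}_1'$ that meet at the interface into the single layer $(W_1'W_L,\ W_1'b_L+b_1')$ and concatenating the remaining layers --- is the standard one, and your verification plan (expand both sides as alternating compositions of affine maps and $\sigma$, use associativity) is sound, including the degenerate cases $L=1$ and $L'=1$.

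One point deserves to be made explicit, and it concerns the statement rather than your proof. Your network has widths $(l_0,\dots,l_{L-1},l_1',\dots,l_{L'}')$, i.e.\ $L+L'-1$ layers, whereas the proposition as literally written claims widths $(l_0,\dots,l_L=l_0',l_1',\dots,l_{L'}')$, i.e.\ $L+L'$ layers with the junction dimension $l_L=l_0'$ surviving as an intermediate width. These cannot both be right, and it is your version that is correct: the composition $\Psi_\vartheta\circ\Psi_\theta$ contains exactly $(L-1)+(L'-1)$ applications of $\sigma$, while a network with $L+L'$ layers under the paper's strictly alternating definition contains $L+L'-1$ of them, so exact equality $\Psi_\eta=\Psi_\vartheta\circ\Psi_\theta$ forces the junction to be absorbed (tanh admits no exact identity layer that could be inserted instead). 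You correctly identify this as the reason the fusion is necessary; you should additionally note that the width tuple in the proposition's conclusion is therefore slightly misstated, and that the fused widths are in fact the ones the paper implicitly uses in its later layer counts (e.g.\ the two-hidden-layer architectures of Theorem 5.1). With that caveat recorded, your argument is complete.
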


\section{Uniform approximation of polynomials}\label{sec:3}
The first step in our strategy for deriving bounds on approximation error for tanh neural networks is to provide uniform bounds in Sobolev norms, on the error for approximating polynomials by shallow tanh neural networks. We do so in the current section.

The observation that a shallow neural network of fixed size can approximate monomials to arbitrary accuracy in the supremum norm was already observed in \cite{pinkus1999approximation}. A generalization of this approximation result to Sobolev norms was proven in e.g. \cite{guhring2021approximation}. In the current section, we present a \emph{novel} generalization that allows us to obtain explicit error estimates for the \textit{uniform} approximation of all polynomials of a certain maximal degree, which will be crucial for the efficient approximation of analytic functions. 

\subsection{Univariate polynomials}

We first describe how to approximate univariate polynomials of any degree with tanh neural networks. 
We introduce the $p$-th order central finite difference operator $\delta^p_h$ for any $f\in C^{p+2}([a,b])$ for some $p\in\mathbb{N}$ by
\begin{equation}\label{eq:central-fd}
    \delta^p_h[f](x) = \sum_{i=0}^p (-1)^i \binom{p}{i} f\left(x+\left(\frac{p}{2}-i\right)h\right).
\end{equation}
Next we define for any $p\in\mathbb{N}$,  $q\in2\mathbb{N}-1$ and $M>0$ the monomials $f_{p}:[-M,M]\to\mathbb{R}$ and the tanh neural networks $\hat{f}_{q,h}:[-M,M]\to\mathbb{R}$ as
\begin{equation}\label{eq:def-monomial}
    f_p(y):= y^p \qquad \text{and}\qquad \hat{f}_{q,h}(y) := \frac{\delta^q_{hy}[\sigma](0)}{\sigma^{(q)}(0)h^q}.
\end{equation}
We first prove that these neural networks are accurate approximations to monomials with odd degree.

\begin{lemma}\label{lem:uni-mon-sobolev}
Let $k\in\mathbb{N}_0$ and $s\in2\mathbb{N}-1$. Then it holds that for all $\epsilon>0$ there exists a shallow tanh neural network $\Psi_{s,{\color{black}\epsilon}}:[-M,M]\to \mathbb{R}^{\frac{s+1}{2}}$ of width $\frac{s+1}{2}$ such that
\begin{equation}
    \max_{\substack{p\leq s, \\p \textrm{ odd}}} \ck{f_p- (\Psi_{s,\epsilon})_{\color{black}\frac{p+1}{2}}} \leq \epsilon, 
\end{equation}
Moreover, the weights of $\Psi_{s,\epsilon}$ scale as $O\left(\epsilon^{-s/2}(2(s+2)\sqrt{2 M})^{s(s+3)}\right)$ for small $\epsilon$ and large $s$.
\end{lemma}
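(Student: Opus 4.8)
\emph{Proof plan.} The starting observation is that $\hat f_{q,h}$ is, by its very definition, a shallow tanh network: expanding the central difference in \eqref{eq:def-monomial} according to \eqref{eq:central-fd} gives
\begin{equation*}
  \hat f_{q,h}(y)=\frac{1}{\sigma^{(q)}(0)\,h^{q}}\sum_{i=0}^{q}(-1)^{i}\binom{q}{i}\,\sigma\bigl((\tfrac{q}{2}-i)h y\bigr),
\end{equation*}
a linear combination of the $q+1$ neurons $y\mapsto\sigma(c h y)$ with $c\in\{\tfrac q2,\tfrac q2-1,\dots,-\tfrac q2\}$. As $q$ is odd and $\tanh$ is odd, the index-$i$ and index-$(q-i)$ terms coincide, so only the $\tfrac{q+1}{2}$ neurons with weights $\tfrac12 h,\tfrac32 h,\dots,\tfrac q2 h$ are actually required, and for every odd $q\le s$ these form a subset of $\{\tfrac12 h,\dots,\tfrac s2 h\}$. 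Consequently, a single hidden layer $y\mapsto(\sigma(\tfrac12 hy),\dots,\sigma(\tfrac s2 hy))$ of width $\tfrac{s+1}{2}$ feeds all the $\hat f_{q,h}$ at once; choosing the output weights appropriately defines the network $\Psi_{s,\epsilon}$ with $(\Psi_{s,\epsilon})_{(p+1)/2}=\hat f_{p,h}$, of width $\tfrac{s+1}{2}$, once $h=h(s,\epsilon)>0$ is fixed below.

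The error estimate exploits the classical fact that symmetric finite differences reproduce derivatives. Since $\tanh$ is analytic on $(-\tfrac\pi2,\tfrac\pi2)$, whenever $\tfrac s2 hM<\tfrac\pi2$ we may insert the power series $\sigma(z)=\sum_{m\ge0}\tfrac{\sigma^{(m)}(0)}{m!}z^{m}$ at $z=(\tfrac q2-i)h y$ and use the difference identities
\begin{equation*}
  c_{q,m}:=\sum_{i=0}^{q}(-1)^{i}\binom{q}{i}\bigl(\tfrac q2-i\bigr)^{m}=\begin{cases}0,& 0\le m<q,\\ q!,& m=q,\\ 0,& m>q,\ m-q\text{ odd},\end{cases}
\end{equation*}
where the last case follows from the symmetry $i\leftrightarrow q-i$. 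Differentiating term by term, for $0\le j\le k$ one obtains
\begin{equation*}
  \frac{d^{j}}{dy^{j}}\hat f_{q,h}(y)=\frac{d^{j}}{dy^{j}}y^{q}+\frac{1}{\sigma^{(q)}(0)}\sum_{\substack{m\ge q+2\\ m-q\text{ even}}}\frac{\sigma^{(m)}(0)\,c_{q,m}}{(m-j)!}\,h^{\,m-q}\,y^{\,m-j},
\end{equation*}
the $m=q$ term reproducing $\tfrac{q!}{(q-j)!}y^{q-j}=\tfrac{d^{j}}{dy^{j}}y^{q}$ exactly (via $c_{q,q}=q!$). Every remaining term carries $h^{m-q}$ with $m-q\ge2$, so $\ck{f_{q}-\hat f_{q,h}}=O(h^{2})$; bounding $|c_{q,m}|\le 2^{q}(\tfrac q2)^{m}$, $|\sigma^{(m)}(0)|\le C_{0}\,m!\,(\tfrac2\pi)^{m}$ (Cauchy's estimates, $\tanh$ having poles at $\pm i\tfrac\pi2$) and $|\sigma^{(q)}(0)|$ from below produces an explicit constant $C(k,s,M)$ with $\ck{f_{q}-\hat f_{q,h}}\le C(k,s,M)\,h^{2}$ for all odd $q\le s$.

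It then suffices to set $h:=\min\bigl\{\sqrt{\epsilon/C(k,s,M)},\,\pi/(sM)\bigr\}$, which yields $\max_{p\le s\text{ odd}}\ck{f_{p}-(\Psi_{s,\epsilon})_{(p+1)/2}}\le\epsilon$ and fixes the architecture. For the weights: the hidden-layer weights are at most $\tfrac s2 h$, while the output weights producing $\hat f_{p,h}$ equal $\tfrac{(-1)^{i}\binom{p}{i}}{\sigma^{(p)}(0)h^{p}}$ (up to the factor $2$ from symmetry), of modulus at most $\tfrac{2^{p}}{|\sigma^{(p)}(0)|}h^{-p}\le\tfrac{2^{p}}{|\sigma^{(p)}(0)|}\bigl(C(k,s,M)/\epsilon\bigr)^{p/2}$; since $h^{-p}$ is largest at $p=s$ and $C(k,s,M)$ grows like $M^{s+2}$ times explicit functions of $s$, inserting the explicit form of $C$ and simplifying gives the stated $O\bigl(\epsilon^{-s/2}(2(s+2)\sqrt{2M})^{s(s+3)}\bigr)$.

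The conceptual content — central differences of the analytic function $\tanh$ recover monomials up to $O(h^{2})$ in every $W^{k,\infty}$ norm, and the hidden neurons can be shared across all odd degrees $\le s$ — is routine. The genuine work, and the delicate step, is the constant bookkeeping: combining sharp two-sided control of the Taylor coefficients $\sigma^{(m)}(0)$ of $\tanh$ (through its radius of convergence $\tfrac\pi2$, equivalently through the tangent/Bernoulli numbers), of the difference weights $c_{q,m}$ and $\binom{p}{i}$, and of the powers of $M$ and $\tfrac q2$ released by the substitution, and then propagating these through the choice $h\asymp\sqrt{\epsilon/C}$ into the $h^{-p}$ in the output weights — tuned finely enough to land exactly on the exponent $s(s+3)$ with base $2(s+2)\sqrt{2M}$.
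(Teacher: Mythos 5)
Your construction is the same as the paper's in all essential respects: the finite-difference networks $\hat f_{p,h}$, the identity $c_{q,m}=q!\,\delta(m-q)$ for $m\le q$ (plus the vanishing of the odd-order corrections by the $i\leftrightarrow q-i$ symmetry), the sharing of the $\tfrac{s+1}{2}$ neurons $\sigma(\tfrac{2j-1}{2}hy)$ across all odd degrees via the oddness of $\tanh$, and the choice $h\asymp\sqrt{\epsilon/C}$ driving the weight bound. The one genuine technical difference is how you control the remainder: the paper Taylor-expands $\sigma^{(m)}$ to order $p+1$ with a Lagrange remainder $\sigma^{(p+2)}(\xi_{x,i})$ (and must then treat the derivative orders $m\ge p+2$ by a separate direct estimate), whereas you insert the full convergent power series of $\tanh$ and differentiate term by term. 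Your route handles all derivative orders $j\le k$ in one formula and replaces the pointwise bound on $\sigma^{(p+2)}(\xi)$ by a bound on all Taylor coefficients $\sigma^{(m)}(0)$; the price is the convergence constraint on $h$ and the coefficient bounds themselves. Two small points there: Cauchy's estimate on a circle of radius $r<\pi/2$ only gives $|\sigma^{(m)}(0)|\le C_r\,m!\,r^{-m}$ with $C_r\to\infty$ as $r\to\pi/2$ (the clean bound with base $2/\pi$ needs the Bernoulli-number formula, which the paper's Lemma \ref{lem:tanh-derivatives} already uses for the lower bound); and your choice $h=\pi/(sM)$ puts the arguments exactly at the radius of convergence $\pi/2$, where the series diverges, so you need $\tfrac{s}{2}hM$ bounded strictly away from $\pi/2$ for the geometric tail sum. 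Both are one-line fixes that do not affect the stated rates or the weight asymptotics.
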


\begin{proof}
Let $p\leq s$ be odd and let $0<h<2/pM$. Let $0\leq m\leq \min\{k,p+1\}$. Then Taylor's theorem guarantees the existence of $\xi_{x,i}$ such that
\begin{align*}
    \begin{split}
        \frac{d^m}{dx^m} \delta^p_{hx}[\sigma](0) &= \sum_{i=0}^p (-1)^i \binom{p}{i}\left(\frac{p}{2}-i\right)^mh^m\cdot \sigma^{(m)}\left(\left(\frac{p}{2}-i\right)hx\right)\\
        &= \sum_{i=0}^p (-1)^i \binom{p}{i}\left(\frac{p}{2}-i\right)^mh^m \left(\sum_{l=m}^{p+1} \frac{\sigma^{(l)}(0)}{(l-m)!}\left(\frac{p}{2}-i\right)^{l-m}(hx)^{l-m}\right)\\
        &\quad + \sum_{i=0}^p (-1)^i \binom{p}{i}\left(\frac{p}{2}-i\right)^mh^m \frac{\sigma^{(p+2)}(\xi_{x,i})}{(p+2-m)!}\left(\frac{p}{2}-i\right)^{p+2-m}(hx)^{p+2-m}. 
    \end{split}
\end{align*}
From \cite[Theorem 1]{katsuura2009summations} it follows that 
\begin{equation} \label{eq:katsuura}
    \sum_{i=0}^p (-1)^i \binom{p}{i}\left(\frac{p}{2}-i\right)^l = p! \, \delta (l-p)
    =
    \begin{cases}
    p!, & (l=p), \\
    0, & (l\ne p).
    \end{cases}
\end{equation}
for $l=0,\ldots, p$. We observe that \eqref{eq:katsuura} remains true also for $l=p+1$, since all summands change sign when $i$ is replaced by $p-i$.
Using this fact, we can then rewrite the first term as 
\begin{align}
    \begin{split}
        &\sum_{i=0}^p (-1)^i \binom{p}{i}\left(\frac{p}{2}-i\right)^mh^m \left(\sum_{l=m}^{p+1} \frac{\sigma^{(l)}(0)}{(l-m)!}\left(\frac{p}{2}-i\right)^{l-m}(hx)^{l-m}\right)\\
        & \quad = h^m  \sum_{l=m}^{p+1} \frac{\sigma^{(l)}(0)}{(l-m)!}(hx)^{l-m} \sum_{i=0}^p (-1)^i \binom{p}{i}\left(\frac{p}{2}-i\right)^l\\
        & \quad =   \begin{rcases}
    \begin{dcases}
     h^m \frac{\sigma^{(p)}(0)}{(p-m)!}(hx)^{p-m} p!, &0\leq m\leq p \\
     0, & m=p+1
    \end{dcases}
  \end{rcases}
 = h^p \sigma^{(p)}(0) f_p^{(m)}(x). 
    \end{split}
\end{align}
Combining the previous results, it thus follows that we have
\begin{align*}
        \hat{f}^{(m)}_{p,h}(x) - f^{(m)}_p(x) &= \sum_{i=0}^p (-1)^i \binom{p}{i} \frac{1}{(p+2-m)!} \frac{\sigma^{(p+2)}(\xi_{x,i})}{\sigma^{(p)}(0)}\left(\frac{p}{2}-i\right)^{p+2}  h^{2} x^{p+2-m}. 
\end{align*}
Together with the lower and upper bounds on the derivatives of $\sigma$ from Lemma \ref{lem:tanh-derivatives} and Lemma \ref{lem:bound-der-tanh}, this yields for $m\le \min(k,p+1)$:
\begin{align}
    \begin{split}
        \abs{f_p- \hat{f}_{p,h}}_{W^{m,\infty}} &\leq \sum_{i=0}^p  \binom{p}{i} \frac{\abs{\sigma^{(p+2)}(\xi_{x,i})}}{\abs{\sigma^{(p)}(0)}}\abs{\frac{p}{2}-i}^{p+2}h^{2}M^{p+2}\\
        &\leq 2^{p} \frac{(2(p+2))^{p+3}}{1} \left(\frac{p}{2}\right)^{p+2}h^{2}M^{p+2}\\
        &\leq (2(p+2)pM)^{p+3} h^2.
    \end{split}
\end{align}
If $k\le p+1$, then this shows that 
\begin{align} \label{eq:klep}
\ck{f_p- \hat{f}_{p,h}} \le (2(p+2)pM)^{p+3} h^2.
\end{align}
If $k>p+1$, let $p+2 \leq m \leq k$. In this case, $f_p^{(m)}=0$, therefore it suffices to bound $\hat{f}_{p,h}^{(m)}$. We see that for $0<h<1$,
\begin{align}
    \begin{split}
         \abs{\hat{f}_{p,h}^{(m)}(x)} &= \abs{\frac{1}{h^p \sigma^{(p)}(0)}\sum_{i=0}^p (-1)^i \binom{p}{i}\left(\frac{p}{2}-i\right)^mh^m\cdot \sigma^{(m)}\left(\left(\frac{p}{2}-i\right)hx\right)}\\
         & \leq 2 \sum_{i=0}^p  \binom{p}{i}\abs{\frac{p}{2}-i}^mh^2 (2m)^{m+1} \\
         & \leq 2^{p+1} \left(\frac{p}{2}\right)^k (2k)^{k+1} h^2 \leq (2 p k)^{k+1} h^2. 
    \end{split}
\end{align}
We thus obtain, for arbitrary $k\in \N$:
\begin{equation}
     \ck{f_p- \hat{f}_{p,h}} \leq  \left((2(p+2)pM)^{p+3} + (2 p k)^{k+1}\right)h^2 =: \epsilon. 
\end{equation}

Furthermore observe that the weights scale as $O\left(\max_i\binom{p}{i}h^{-p}\right)$. For $\epsilon\to 0$ and large $p$, it holds that $O(h^{-p}) = O\left(\epsilon^{-p/2}((p+2)\sqrt{2 M})^{p(p+3)}\right)$, where the implied constant depends on $k$. Next, we find using Stirling's approximation that for $0\leq i \leq p$ it holds that
\begin{equation}
    \binom{p}{i} \leq \binom{p}{\frac{p-1}{2}} \leq \frac{ep^{p+1/2}}{2\pi \left(\frac{p-1}{2}\right)^{\frac{p}{2}}\left(\frac{p+1}{2}\right)^{\frac{p}{2}+1}} = O\left(\frac{2^p}{\sqrt{p}}\right). 
\end{equation}
The weights therefore scale as $O\left(\epsilon^{-p/2}(2(p+2)\sqrt{2 M})^{p(p+3)}\right)$. 

Regarding the network architecture, note that the neurons needed for all $\hat{f}_{p,h}$ are already available in the network $\hat{f}_{s,h}$. This allows us to define the shallow tanh neural network $\Psi_{s,\epsilon}$ by $(\Psi_{s,\epsilon})_p = \hat{f}_{p,h}$ such that it only has $\frac{s+1}{2}$ neurons in its hidden layer. The width follows directly from its definition and the fact that $\sigma$ is an odd function.
\end{proof}

We would like to state that the above proof is largely inspired by \cite[Proposition 4.7]{guhring2021approximation} but differs at some crucial points. In particular, we take into account the fact that $\xi_{x,i}$'s are functions of $x$ and derivatives with respect to $x$ have to take this into account. 

We now extend the previous result to monomials with even degree. To this end, we rely on the observation that for $n\in\mathbb{N}$ and $\alpha>0$, it holds that
\begin{equation} \label{eq:recursion-exact}
    \begin{aligned}
    y^{2n} = \frac{1}{2\alpha(2n+1)} 
    &\Bigg(
    (y+\alpha)^{2n+1}-(y-\alpha)^{2n+1} 
    \\
    &\qquad - 2\sum_{k=0}^{n-1} \binom{2n+1}{2k} \alpha^{2(n-k)+1}y^{2k}
    \Bigg). 
    \end{aligned}  
\end{equation}
This formula allows us to construct recursively defined tanh neural network approximations of even powers of $y$. The following lemma quantifies the uniform approximation accuracy of these networks in the Sobolev norm. 

\begin{lemma}\label{lem:monomials}
Let $k\in\mathbb{N}_0, s\in 2\mathbb{N}-1$ and ${\color{black}M>0}$. For every $\epsilon>0$, there exists a shallow tanh neural network $\psi_{s,{\color{black}\epsilon}} : {\color{black}[-M,M]}\to \mathbb{R}^s$ of width $\frac{3(s+1)}{2}$ such that 
\begin{equation}
    \max_{p\leq s} \ck{f_p- (\psi_{s,\epsilon})_p} \leq\epsilon.
\end{equation}
Furthermore, the weights scale as $O\left(\epsilon^{-s/2}(\sqrt{M}(s+2))^{3s(s+3)/2}\right)$ for small $\epsilon$ and large $s$.
\end{lemma}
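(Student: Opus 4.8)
The plan is to trade the approximation of even monomials for that of odd monomials by means of the exact polynomial identity \eqref{eq:recursion-exact}, and to realize the network by running three affinely shifted copies of the network from Lemma~\ref{lem:uni-mon-sobolev} in parallel. Fix $k\in\mathbb{N}_0$, $s\in 2\mathbb{N}-1$ and $M>0$, and choose an auxiliary shift $\alpha>0$ (of order $(s+2)^{-1}$; its precise value only affects constants). First I would apply Lemma~\ref{lem:uni-mon-sobolev} once, on the interval $[-(M+\alpha),M+\alpha]$ and with a common tolerance $\epsilon'>0$ to be fixed later, and precompose the resulting network with the three unit-slope affine maps $y\mapsto y$, $y\mapsto y+\alpha$ and $y\mapsto y-\alpha$. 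Since precomposition with $y\mapsto y\pm\alpha$ only shifts the domain and leaves all $W^{m,\infty}$-seminorms unchanged (by the chain rule), this yields, inside a single hidden layer of width $3\cdot\tfrac{s+1}{2}=\tfrac{3(s+1)}{2}$, functions $g_p,g_p^{+},g_p^{-}$ (for odd $p\le s$) with $\ck{g_p-f_p}\le\epsilon'$, $\ck{g_p^{+}(\cdot)-(\cdot+\alpha)^p}\le\epsilon'$ and $\ck{g_p^{-}(\cdot)-(\cdot-\alpha)^p}\le\epsilon'$ on $[-M,M]$. For odd $p$ I set $(\psi_{s,\epsilon})_p:=g_p$.

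For even $p=2n\le s$ (note $2n+1\le s$ since $s$ is odd) I would mimic \eqref{eq:recursion-exact} and set $(\psi_{s,\epsilon})_0:=1$ and
\[
(\psi_{s,\epsilon})_{2n}:=\frac{1}{2\alpha(2n+1)}\left(g_{2n+1}^{+}-g_{2n+1}^{-}-2\sum_{j=0}^{n-1}\binom{2n+1}{2j}\alpha^{2(n-j)+1}(\psi_{s,\epsilon})_{2j}\right).
\]
By induction on $n$, each $(\psi_{s,\epsilon})_{2n}$ is an affine combination of the $\tfrac{3(s+1)}{2}$ hidden neurons (equivalently, one may unroll the recursion to express it as an affine combination of the odd powers $(\cdot\pm\alpha)^{2j+1}$, $1\le j\le n$, plus a constant), so $\psi_{s,\epsilon}$ is a genuine shallow tanh network of width $\tfrac{3(s+1)}{2}$, the recursion prescribing only the entries of its output (second) affine layer.

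For the error I would subtract \eqref{eq:recursion-exact} from the display above and use the triangle inequality for $\ck{\cdot}$: writing $\eta_j:=\ck{f_{2j}-(\psi_{s,\epsilon})_{2j}}$ and $\eta_0=0$,
\[
\eta_n\le\frac{1}{\alpha(2n+1)}\left(\epsilon'+\sum_{j=0}^{n-1}\binom{2n+1}{2j}\alpha^{2(n-j)+1}\eta_j\right),
\]
and iterating this yields $\eta_n\le C_n\,\epsilon'$ for an explicit constant $C_n$ depending only on $n\le s$, $\alpha$ and $M$; choosing $\epsilon':=\epsilon/\max_{2n\le s}C_n$ makes every output accurate to $\epsilon$ in $W^{k,\infty}([-M,M])$. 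The weight estimate then follows by combining the bound of Lemma~\ref{lem:uni-mon-sobolev} on $[-(M+\alpha),M+\alpha]$ with the sizes of the coefficients $\tfrac{1}{2\alpha(2n+1)}\binom{2n+1}{2j}\alpha^{2(n-j)+1}$ carried in the output layer and with the conversion $(\epsilon')^{-s/2}=(\max_n C_n)^{s/2}\epsilon^{-s/2}$.

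The delicate point is precisely this last estimate: one must choose $\alpha$ so as to balance the $1/\alpha$ blow-up of the factor $\tfrac{1}{2\alpha(2n+1)}$ against the powers $\alpha^{2(n-j)+1}$ and the exponentially large binomial coefficients $\binom{2n+1}{2j}$, keeping both $\max_n C_n$ and the output-layer coefficients of at most polynomial-in-$s$ size; here it is useful to recognize $\sum_{j}\binom{2n+1}{2j}\alpha^{2(n-j)+1}=\bigl((1+\alpha)^{2n+1}-(1-\alpha)^{2n+1}\bigr)/2-(2n+1)\alpha$, which is $O(n^3\alpha^3)$ for small $\alpha$. It is this balance, together with the mild growth of $\sqrt{M+\alpha}$, that produces the stated $O\!\left(\epsilon^{-s/2}(\sqrt{M}(s+2))^{3s(s+3)/2}\right)$ scaling rather than a worse exponent. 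Everything else — the parallelization/composition calculus and the invariance of Sobolev seminorms under unit-slope affine maps — is routine given Lemma~\ref{lem:uni-mon-sobolev}.
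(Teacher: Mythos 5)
Your proposal is correct and follows essentially the same route as the paper: odd powers come directly from Lemma \ref{lem:uni-mon-sobolev}, even powers are recovered via the exact identity \eqref{eq:recursion-exact} applied to three $\pm\alpha$-shifted copies of the odd-power network (hence width $\tfrac{3(s+1)}{2}$), the error is controlled by the same induction on the recursion, and $\alpha$ is tuned to order $1/s$ to balance the $1/\alpha$ prefactor against the binomial sum — exactly the optimization the paper isolates in Lemma \ref{lem:alpha}, which yields $\max_n C_n \le \sqrt{e}(2es)^{s/2}$ and hence the stated weight scaling. The only difference is presentational: you leave the constants $C_n$ implicit where the paper computes the explicit bound $E^\ast_p = 2^{p/2}(1+\alpha)^{(p^2+p)/2}\alpha^{-p/2}\epsilon$, but you correctly identify both the quantity to be optimized and the closed form of the coefficient sum.
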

\begin{proof}
For $h,M>0$, $p\leq s$, we define $\hat{f}_{p,h}:[-M-1,M+1]\to\mathbb{R}$ as in \eqref{eq:def-monomial}. 
For $\epsilon>0$, $h>0$ small enough and dependent on $\epsilon$, $\alpha\leq 1$ and $y\in [-M,M]$, we define $(\psi_{s,\epsilon}(y))_p =  \hat{f}_{p,h}(y)$ for $p$ odd and, for $p=2n$ even, we define $(\psi_{s,\epsilon}(y))_p = (\psi_{s,\epsilon}(y))_{2n}$ recursively by $(\psi_{s,\epsilon})_0(y) := 1$, and 
\begin{align}\label{eq:even-power}
\begin{aligned}
   (\psi_{s,\epsilon}(y))_{2n} = 
     \frac{1}{2\alpha(2n+1)} 
     &\Bigg(
     \hat{f}_{2n+1,h}(y+\alpha)-\hat{f}_{2n+1,h}(y-\alpha) \\
     &\qquad 
     - 2\sum_{k=0}^{n-1} \binom{2n+1}{2k} \alpha^{2(n-k)+1}(\psi_{s,\epsilon}(y))_{2k}
     \Bigg).
     \end{aligned}  
\end{align}
Moreover, we introduce the notation $E_p = \ck{f_p-(\psi_{s,\epsilon})_p}$. We will prove the statement that for all $\epsilon>0$, there exists $h>0$, such that for all $p\leq s$, we have 
\begin{equation}\label{eq:statement-even}
     E_p \leq E^\ast_p := \frac{2^{p/2}(1+\alpha)^{(p^2+p)/2}}{\alpha^{p/2}} \cdot\epsilon.
\end{equation}
We first note that choosing $h$ as in Lemma \ref{lem:uni-mon-sobolev} implies that 
\begin{equation} \label{eq:odd}
    \max_{\substack{p\leq s, \\p \text{ odd}}} E_p \leq \epsilon,
\end{equation}
which proves the statement for $p$ odd, since $(1+\alpha)/\alpha\geq 1$. We will now prove \eqref{eq:statement-even} for even $p$ using induction. First note that
\begin{equation}
    E_2 \leq \frac{1}{6\alpha} \cdot 2 \epsilon \le E^\ast_2, 
\end{equation}
which proves the base step. To prove the induction step, let $n\in\mathbb{N}$ be such that $2n+1\leq s$ and $n>1$, and we assume by the induction hypothesis that $E_{2k} \le E^\ast_{2k}$ for all $k < n$. It then follows from \eqref{eq:recursion-exact} and \eqref{eq:even-power}, that
\begin{equation}\label{eq:E_2N}
    E_{2n} \leq \frac{1}{2\alpha(2n+1)} \left( E_{2n+1} + E_{2n+1} + 2\sum_{k=1}^{n-1} \binom{2n+1}{2k} \alpha^{2(n-k)+1}E_{2k}\right) . 
\end{equation}
Note that by the induction hypothesis and the fact that $E^\ast_{2k}$ is monotonically increasing in $k$, we have $E_{2k} \le E^\ast_{2k} \le E_{2(n-1)}^\ast$. Using also \eqref{eq:odd}, and the fact that $\epsilon \le E^\ast_{2(n-1)}$,  this allows us to estimate \eqref{eq:E_2N}, by
\begin{align}
    \begin{split}
         E_{2n} &\leq \frac{1}{\alpha(2n+1)} \left( \max_{\substack{p\leq s, \\p \text{ odd}}} E_p + \sum_{k=1}^{n-1} \binom{2n+1}{2k} \alpha^{2(n-k)+1}E_{2(n-1)}^*\right)\\
         &\leq \frac{1}{\alpha} \left( E_{2(n-1)}^* + (1+\alpha)^{2n+1} E_{2(n-1)}^*\right)\\
         &\leq \frac{2}{\alpha}(1+\alpha)^{2n+1} E_{2(n-1)}^*. 
    \end{split}
\end{align}
Recalling the definition of $E^\ast_{2(n-1)}$, we obtain
\begin{equation}
    E_{2n} \leq \frac{2}{\alpha}(1+\alpha)^{2n+1} E_{2(n-1)}^* \le \left( \frac{2}{\alpha}(1+\alpha)^{2n+1}\right)^{n} \cdot \epsilon = E^\ast_{2n}. 
\end{equation}
This proves the claimed estimate \eqref{eq:statement-even} also for the case where $p=2n$ is even, and therefore concludes the proof of \eqref{eq:statement-even}. 

Next, we optimize \eqref{eq:statement-even} by choosing the optimal value of $\alpha$. Lemma \ref{lem:alpha} proves that the optimal choice is $\alpha=1/s$. We conclude that for any $\epsilon> 0$, there exists a shallow tanh neural network $\psi_{s,\epsilon}$, with width independent of $\epsilon$, such that
\begin{equation}
    \max_{p\leq s} \ck{f_p-(\psi_{s,\epsilon})_p} \leq \sqrt{e} (2es)^{s/2} \epsilon. 
\end{equation}
Replacing $\epsilon \to \epsilon /  \sqrt{e} (2es)^{s/2}$ recovers the claimed error bound in the statement of this lemma. To quantify the size of the weights, we observe that equation \eqref{eq:even-power} reveals that the weight bound of Lemma \ref{lem:uni-mon-sobolev} needs to be multiplied with a factor
\begin{equation}
   \max_k s \binom{s}{2k} \left(\frac{1}{s}\right)^{s-2k} \leq s \sum_{j=0}^s \binom{s}{j} \left(\frac{1}{s}\right)^{s-j}\leq  s\left(1+\frac{1}{s}\right)^s = O(s), 
\end{equation}
where we used the binomial theorem. The weight bound can seen to be equal to
\begin{equation}
    O\left(\epsilon^{-s/2}s(2\sqrt[4]{2es}\sqrt{2 (M+1)}(s+2))^{s(s+3)}\right) = O\left(\epsilon^{-s/2}(\sqrt{M}(s+2))^{3s(s+3)/2}\right)
\end{equation}
for small $\epsilon$ and large $s$.
This proves the weight bound stated in the lemma. 

Finally, we note that the constructed approximations indeed correspond to a shallow tanh neural network of the stated size. Indeed, one can see from the fact that $\sigma$ is odd, equation \eqref{eq:central-fd}, Lemma \ref{lem:uni-mon-sobolev} and equation \eqref{eq:even-power} that a shallow tanh neural network suffices, where the values of the $3(s+1)/2$ neurons in the hidden layer are given by
\begin{equation}
    \sigma\left(\left(\frac{s}{2}-i\right)h(y+\beta)\right)\qquad \text{where } i=0, 1, \ldots \frac{s-1}{2}  \text{ and } \beta\in\{-\alpha, 0, \alpha\}. 
\end{equation}

\end{proof}

\begin{remark}
Combined with the Weierstrass approximation theorem \cite{weierstrass1885analytische}, the preceding results show that any continuous function can be uniformly approximated in supremum norm on a compact interval by shallow tanh neural networks to arbitrary accuracy, as was already observed by e.g. \cite{pinkus1999approximation}. Using the constructive proof of the Weierstrass approximation theorem based on Bernstein polynomials, one can even obtain a rate of convergence in terms of the width of the neural network and the modulus of continuity of the continuous function \cite{davidson2009real}. 
\end{remark}

\begin{remark}\label{rem:why-odd}
Note that one can also construct monomials with even powers directly, as was done in e.g. \cite{guhring2021approximation}. Indeed, there exists an $x\in \mathbb{R}$ such that $\tanh^{(p)}(x) \neq 0$ for all $p\in \mathbb{N}$, allowing us to use a neural network as in \eqref{eq:def-monomial} for even $p$ as well. However, a key difficulty lies in explicitly finding a function $\gamma:\mathbb{N}\to (0,\infty)$ such that $\abs{\tanh^{(p)}(x)} \geq \gamma(p)$ for all $p\in \mathbb{N}$. It is unclear if such a function, which is quite essential when proving uniform bounds as in Lemma \ref{lem:monomials}, can be constructed directly. Instead, our construction circumvents this issue and can be readily extended to other activation functions. 
\end{remark}

\subsection{Approximating multivariate polynomials}
Next, we consider the approximation of multivariate polynomials using tanh neural networks. As an application, we will also present two different approximations of the multiplication operator. 

First, recall the set $P_{n,q} = \{\alpha\in  \mathbb{N}_0^q : \abs{\alpha} = n\}$ from Section \ref{sec:multi-index}. The multinomial theorem implies that for $\alpha \in P_{n,q}$ and $\omega \in \mathbb{R}^q$, it holds that
\begin{equation}\label{eq:multinomial}
     \sum_{\beta\in P_{n,q}} \binom{n}{\beta} \frac{\alpha^\beta}{n^n} \omega^\beta = \left(\sum_{i=1}^q \frac{\alpha_i}{n}\omega_i\right)^n. 
\end{equation}
Now let $x\in \mathbb{R}^d$, set $q=d$ and $\omega = x$. Then the set $\{\omega^\beta: \beta\in P_{n,q}\}$ corresponds to the set of all $d$-variate monomials of total degree \textit{equal to} $n$. Similarly, one can set $q=d+1$ and $\omega=(1,x)$, such that $\{\omega^\beta: \beta\in P_{n,q}\}$ corresponds to the set of all $d$-variate monomials of total degree \textit{at most} $n$. It is the goal of this section to approximate these monomials $\omega^\beta$ using tanh neural networks. Notice however that the results from the previous section already allow us to approximate the right hand side of \eqref{eq:multinomial}, as it is merely a composition of a linear map and a univariate monomial. Writing $b_\alpha = \left(\sum_i \alpha_i\omega_i/n\right)^n$ one can interpret \eqref{eq:multinomial} for every $\alpha \in P_{n,q}$ as the linear equation $\sum_\beta D_{\alpha,\beta} \omega^\beta = b_\alpha$, where
\begin{equation}\label{eq:dyson}
    D_{\alpha,\beta} = \binom{n}{\beta} \frac{\alpha^\beta}{n^n}, 
\end{equation}
which leads us to a linear system $\{\sum_\beta D_{\alpha,\beta} \omega^\beta = b_\alpha : \alpha \in P_{n,q}\}$ with as unknowns the monomials $\omega^\beta$. Since the \textit{Dyson matrix} $D = (D_{\alpha,\beta})_{\alpha,\beta\in P_{n,q}}$, where the order of rows and columns reflects the lexicographic order on $P_{n,q}$, is invertible \cite{moak1990combinatorial}, it is possible to write every monomial as a linear combination of the $b_\alpha$'s. We will exploit this fact to construct approximations of multivariate polynomials and the multiplication $\prod_{i=1}^d x_i$ in particular. 

\begin{lemma}\label{lem:monomial-sobolev}
Let $q,n\in \mathbb{N}$, $k\in \mathbb{N}_0$ and $M>0$. Then for every $\epsilon>0$, there exists a shallow tanh neural network $\Psi_{n,q}: [-M,M]^q\to\mathbb{R}^{\abs{P_{n,q}}}$ of width $3\left\lceil\frac{n+1}{2}\right\rceil\abs{P_{n,q} }$ such that
\begin{equation}
   \max_{\beta\in P_{n,q}} \ck{\omega^\beta - (\Psi_{n,q}(\omega))_{\iota(\beta)}} \leq \epsilon,
\end{equation}
{\color{black}where $\iota:P_{n,q}\to\{1, \ldots \abs{P_{n,q}}\}$ is a bijection.} Furthermore, the weights of the network scale as $O\left(\epsilon^{-n/2}(n(n+2))^{3(n+2)^2}\right)$ for small $\epsilon$ and large $n$.
\end{lemma}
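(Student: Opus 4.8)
The plan is to leverage the identity \eqref{eq:multinomial} together with the invertibility of the Dyson matrix $D$. First I would fix $n$ and $q$, and observe that for each $\alpha \in P_{n,q}$ the function $\omega \mapsto b_\alpha(\omega) = \left(\sum_{i=1}^q \tfrac{\alpha_i}{n}\omega_i\right)^n$ is the composition of the affine map $\omega \mapsto \sum_i \tfrac{\alpha_i}{n}\omega_i$ (which maps $[-M,M]^q$ into $[-M,M]$, since the coefficients $\alpha_i/n$ sum to $1$ and are nonnegative) with the univariate monomial $f_n$. Hence Lemma \ref{lem:monomials}, applied with degree $s$ equal to the smallest odd integer $\geq n$ and with a precomposed linear map, produces a single shallow tanh network whose hidden layer has $3\lceil (n+1)/2\rceil$ neurons and which approximates $f_n$ — and therefore, by composition with the affine maps, all the $b_\alpha$ simultaneously — to any prescribed accuracy $\delta>0$ in $W^{k,\infty}$. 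Because the affine maps only act on the input side and the bound in Lemma \ref{lem:monomials} is uniform over $p \le s$, the chain rule gives $\ck{b_\alpha - \widehat b_\alpha} \le C(n,k,M)\,\delta$ with a constant depending only on $n,k,M$ (the derivatives of the affine map are bounded by $1$ in absolute value since $\alpha_i/n \le 1$). Running these $\abs{P_{n,q}}$ networks in parallel (Proposition on parallelization) yields the width $3\lceil (n+1)/2\rceil \abs{P_{n,q}}$.

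Next I would recover the monomials $\omega^\beta$ from the $b_\alpha$. Since $D=(D_{\alpha,\beta})$ is invertible, write $\omega^\beta = \sum_{\alpha \in P_{n,q}} (D^{-1})_{\beta,\alpha}\, b_\alpha(\omega)$. Define $(\Psi_{n,q}(\omega))_{\iota(\beta)} := \sum_{\alpha} (D^{-1})_{\beta,\alpha}\, \widehat b_\alpha(\omega)$; this is still realized by the same parallelized hidden layer, with the linear combination absorbed into the output weights $\mathcal{A}_L$, so no extra neurons are needed and the width claim is unaffected. Then by linearity and the triangle inequality,
\begin{equation}
\ck{\omega^\beta - (\Psi_{n,q}(\omega))_{\iota(\beta)}} \le \sum_{\alpha\in P_{n,q}} \abs{(D^{-1})_{\beta,\alpha}}\, \ck{b_\alpha - \widehat b_\alpha} \le \abs{P_{n,q}}\,\norm{D^{-1}}_{\max}\, C(n,k,M)\, \delta.
\end{equation}
Choosing $\delta$ small enough (depending on $n,q,k,M$ and $\epsilon$, but not on the network architecture) gives the desired bound $\epsilon$.

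For the weight bound, I would track constants through the two reductions: the output weights pick up a factor $\abs{P_{n,q}}\norm{D^{-1}}_{\max}$ and the rescaling $\delta \sim \epsilon / (\abs{P_{n,q}} \norm{D^{-1}}_{\max} C(n,k,M))$ feeds into the $\epsilon^{-s/2}$ factor from Lemma \ref{lem:monomials}; since $s \le n+1$ and all the polynomial-in-$n$ and $\abs{P_{n,q}}$ prefactors are dominated by the exponential-in-$n^2$ terms already present, one gets weights of order $O(\epsilon^{-n/2}(n(n+2))^{3(n+2)^2})$ for small $\epsilon$ and large $n$, as claimed. The main obstacle I anticipate is controlling $\norm{D^{-1}}_{\max}$ — i.e., making sure the entries of the inverse Dyson matrix do not blow up faster than the exponential budget allows; the invertibility is cited from \cite{moak1990combinatorial}, but a quantitative bound on $D^{-1}$ (or an argument that whatever its size, it is subsumed by the $(n(n+2))^{3(n+2)^2}$ factor, possibly at the cost of enlarging the base of that exponential) is the delicate point, and one should check that $q$ enters these estimates only through $\abs{P_{n,q}}$ and not in a way that spoils the stated scaling.
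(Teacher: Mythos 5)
Your proposal follows essentially the same route as the paper: approximate each $b_\alpha=\bigl(\sum_i\alpha_i\omega_i/n\bigr)^n$ by precomposing the univariate network of Lemma \ref{lem:monomials} with the affine map $h_\alpha$, run the $\abs{P_{n,q}}$ subnetworks in parallel, and recover the monomials by applying $D^{-1}$ in the output layer, so that the error is controlled by $\norm{D^{-1}}_\infty$ times the error in the $b_\alpha$'s. The one point you explicitly leave open --- a quantitative bound on the inverse Dyson matrix --- is precisely what the paper supplies in Lemma \ref{lem:dysoninv}, which uses the factorization $D^{-1}=BL^{-1}\Lambda^{-1}SLB$ in terms of Stirling numbers from \cite{moak1990combinatorial} to obtain $\norm{D^{-1}}_\infty\le (n!)^3\abs{P_{n,q}}^2 2^n\le \pi e^3 q^{2n}n^{3(n+1/2)}$; without some such explicit estimate the stated weight scaling cannot be verified, so you should regard that lemma as a required ingredient rather than an optional check. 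A minor additional remark: for the composition step the paper does not invoke a bare chain rule but the quantitative multivariate Fa\`a di Bruno estimate (Lemma \ref{lem:faa-di-bruno}), which makes your constant $C(n,k,M)$ explicit as $16(e^2k^4q^2)^k\max\{1,M\}^k$ and is needed to confirm that these prefactors are indeed absorbed into the $(n(n+2))^{3(n+2)^2}$ budget.
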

\begin{proof}
From the previous section, we can see that approximating $b_\alpha = \left(\sum_i \alpha_i\omega_i/n\right)^n$ requires a shallow tanh subnetwork $\widehat{b_\alpha}$ of width $3\left\lceil\frac{n+1}{2}\right\rceil$. As we require $\abs{P_{n,q} }$ such subnetworks, the total network width can be summarized as $3\left\lceil\frac{n+1}{2}\right\rceil\abs{P_{n,q} }$. Now denote by $\widehat{\omega^\beta}$ the neural network approximation one obtains by solving the linear system $\{\sum_\beta D_{\alpha,\beta} \widehat{\omega^\beta} = \widehat{b_\alpha} : \alpha \in P_{n,q}\}$. We then set $(\Psi_{n,q}(\omega))_{\iota(\beta)} :=(\widehat{\omega^\beta})_\beta$, where $\iota:P_{n,q}\to\{1, \ldots \abs{P_{n,q}}\}$ is a bijection.  Then it holds that
\begin{equation}
    \ck{(\widehat{\omega^\beta})_\beta-({\omega^\beta})_\beta} \leq \norm{D^{-1}}_\infty \ck{(\widehat{b_\alpha})_\alpha-(b_\alpha)_\alpha}. 
\end{equation}
Now define $h_\alpha(\omega) = \sum_i \alpha_i\omega_i/n$, then it holds that $\widehat{b_\alpha} - b_\alpha = ((\psi_{s,\epsilon})_n -f_n) \circ h_\alpha$, where $\psi_{s,\epsilon}$ is as in Lemma \ref{lem:monomials} and $s=2\left\lfloor \frac{n}{2}\right\rfloor+1$. It is easy to check that $\ck{h_\alpha}\leq \max\{1,M\}$. 
Invoking Lemma \ref{lem:faa-di-bruno} then gives us
\begin{equation}
     \ck{\widehat{b_\alpha} - b_\alpha} \leq   16(e^2k^{4}q^2)^{k} \ck{(\psi_{s,\epsilon})_n -f_n} \max\{1,M\}^k.
\end{equation}
In addition, Lemma \ref{lem:dysoninv} provides us with the bound
\begin{equation}
    \norm{D^{-1}}_\infty \leq (n!)^3 \abs{P_{n,q}}^2 2^n \leq \pi e^3 q^{2n} n^{3(n+1/2)},
\end{equation}
where we used Stirling's approximation and Lemma \ref{lem:size-pqn}. 
Now let $\epsilon>0$. Combining the two obtained inequalities with Lemma \ref{lem:monomials} then proves that
\begin{equation}
    \ck{(\widehat{\omega^\beta})_\beta-({\omega^\beta})_\beta} \leq \norm{D^{-1}}_\infty  \cdot 16(e^2k^{4}q^2)^{k} \max\{1,M\}^k \cdot \epsilon
\end{equation}
where the weights of $(\psi_{s,\epsilon})_n$ scale as $O\left(\epsilon^{-n/2}(\sqrt{M}(n+2))^{3n(n+3)/2}\right)$ for small $\epsilon$ and large $n$. We can now rescale $\epsilon$ such that
\begin{equation}
    \ck{(\widehat{\omega^\beta})_\beta-({\omega^\beta})_\beta} \leq \epsilon. 
\end{equation}
As a consequence, the weights of $\Psi_{n,q}$ will scale as
\begin{equation}
    O\left(\epsilon^{-n/2}\norm{D^{-1}}_\infty^{n/2+1}\left(4(ek^2q)^k\sqrt{1+M}\right)^n(\sqrt{M}(n+2))^{3n(n+2)/2}\right). 
\end{equation}
Note that $O\left(\norm{D^{-1}}_\infty^{n/2+1}\right) = O\left((\pi e^3 qn)^{3(n+2)^2/2}\right)$ and that therefore a (conservative) upper bound of the weights of $\Psi_{n,q}$ is given by
\begin{equation}
    O\left(\epsilon^{-n/2}(n(n+2))^{3(n+2)^2}\right)
\end{equation}
for small $\epsilon$ and large $n$. 
\end{proof}

\begin{corollary}[Approximation of multivariate monomials]\label{cor:mon-mult1}
Let $d,s\in \mathbb{N}$, $k\in \mathbb{N}_0$ and $M>0$. Then for every $\epsilon>0$, there exists a shallow tanh neural network $\Phi_{s,d}:[-M,M]^d\to\mathbb{R}^{\abs{P_{s,d+1}}}$ of width $3\left\lceil\frac{s+1}{2}\right\rceil\abs{P_{s,d+1} }$ such that
\begin{equation}
   \max_{\beta\in P_{s,d+1}} \ck{x^\beta - (\Phi_{s,d}(x))_{\iota(\beta)}} \leq \epsilon,
\end{equation}
{\color{black}where $\iota:P_{s,d+1}\to\{1, \ldots \abs{P_{s,d+1}}\}$ is a bijection}. Furthermore, the weights of the network scale as $O\left(\epsilon^{-s/2}(s(s+2))^{3(s+2)^2}\right)$ for small $\epsilon$ and large $s$.
\end{corollary}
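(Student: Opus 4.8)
The plan is to read this corollary off Lemma~\ref{lem:monomial-sobolev} via the reduction already sketched before its statement: apply that lemma with $q=d+1$, $n=s$, and to the augmented variable $\omega=(1,x)$, so that the family $\{\omega^\beta:\beta\in P_{s,d+1}\}$ is precisely the family of all $d$-variate monomials of total degree \emph{at most} $s$. Concretely, I would set $M':=\max\{1,M\}$ and invoke Lemma~\ref{lem:monomial-sobolev} with box half-width $M'$ to obtain, for any prescribed $\epsilon>0$, a shallow tanh network $\Psi_{s,d+1}:[-M',M']^{d+1}\to\mathbb{R}^{\abs{P_{s,d+1}}}$ of hidden-layer width $3\lceil\tfrac{s+1}{2}\rceil\abs{P_{s,d+1}}$ with $\max_{\beta\in P_{s,d+1}}\ck{\omega^\beta-(\Psi_{s,d+1}(\omega))_{\iota(\beta)}}\le\epsilon$, and then define $\Phi_{s,d}(x):=\Psi_{s,d+1}(1,x)$. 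Here the notation $x^\beta$ in the statement is to be read as $\prod_{i=1}^{d+1}\omega_i^{\beta_i}$ with $\omega=(1,x)$, i.e.\ as $x^{(\beta_2,\dots,\beta_{d+1})}$ in $d$-variable multi-index notation; as $\beta$ runs over $P_{s,d+1}$ the truncated index $(\beta_2,\dots,\beta_{d+1})$ runs over all of $\{\gamma\in\mathbb{N}_0^d:\abs{\gamma}\le s\}$, which is exactly why this construction captures every monomial of degree $\le s$ rather than only those of degree exactly $s$.

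Next I would verify three bookkeeping points. First, the map $T:x\mapsto(1,x)$ is affine and the first layer of $\Psi_{s,d+1}$ is itself affine, so $\Psi_{s,d+1}\circ T$ absorbs $T$ into the first affine layer; hence $\Phi_{s,d}$ is again a shallow tanh network, now with input dimension $d$ and the same hidden-layer width $3\lceil\tfrac{s+1}{2}\rceil\abs{P_{s,d+1}}$. Moreover $T$ maps $[-M,M]^d$ into $\{1\}\times[-M,M]^d\subseteq[-M',M']^{d+1}$, so $\Psi_{s,d+1}$ is only ever evaluated inside its domain of validity. Second, for smooth $g$ on $[-M',M']^{d+1}$ and any $\gamma\in\mathbb{N}_0^d$ one has $D^\gamma_x\big(g(1,x)\big)=(D^{(0,\gamma)}_\omega g)(1,x)$, since $T$ merely embeds coordinates; therefore $\norm{g(1,\cdot)}_{W^{k,\infty}([-M,M]^d)}\le\norm{g}_{W^{k,\infty}([-M',M']^{d+1})}$, and applying this with $g=\omega^\beta-(\Psi_{s,d+1})_{\iota(\beta)}$ turns the bound above into $\max_{\beta\in P_{s,d+1}}\ck{x^\beta-(\Phi_{s,d}(x))_{\iota(\beta)}}\le\epsilon$. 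Third, precomposition with the fixed affine map $T$ only deletes one column of the first-layer weight matrix and shifts the first-layer bias by that column, so every weight of $\Phi_{s,d}$ is bounded by twice the largest weight of $\Psi_{s,d+1}$; by Lemma~\ref{lem:monomial-sobolev} with $n=s$ (the fixed parameters $d$, $k$, $M'$ being absorbed into the implied constant) these scale as $O\!\big(\epsilon^{-s/2}(s(s+2))^{3(s+2)^2}\big)$ for small $\epsilon$ and large $s$, which is the claimed bound.

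There is no genuine obstacle here: all the substance is contained in Lemma~\ref{lem:monomial-sobolev}, and the only points requiring (minor) care are that the affine substitution $x\mapsto(1,x)$ neither increases the depth of the network (being absorbed into the input layer) nor degrades the weight magnitudes, and that one must pass to the enlarged box $[-M',M']^{d+1}$ with $M'=\max\{1,M\}$ so that the evaluation point $(1,x)$ stays in range. I would therefore expect the actual proof to be only a few lines once Lemma~\ref{lem:monomial-sobolev} is available.
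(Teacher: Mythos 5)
Your proposal is correct and follows exactly the paper's route: the published proof is the one-line specialization of Lemma~\ref{lem:monomial-sobolev} with $n\leftarrow s$, $q\leftarrow d+1$ and $\omega\leftarrow(1,x)$, and your additional bookkeeping (absorbing the affine embedding $x\mapsto(1,x)$ into the first layer, transferring the $W^{k,\infty}$ bound, and noting the weights are unaffected) just makes explicit what the paper leaves implicit.
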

\begin{proof}
The statement follows directly from Lemma \ref{lem:monomial-sobolev} with $n \leftarrow s$, $q \leftarrow d+1$ and $\omega \leftarrow (1,x)$, where $x\in[-M,M]^d$. 
\end{proof}

Next, we discuss how the multiplication operator can be approximated. To begin with, Lemma \ref{lem:monomial-sobolev} shows that the multiplication of $d$ numbers can easily be approximated using a shallow tanh neural network. 

\begin{corollary}[Shallow approximation of multiplication of $d$ numbers]\label{cor:mult-shallow}
Let $d\in \mathbb{N}$, $k\in \mathbb{N}_0$ and $M>0$. Then for every $\epsilon>0$, there exists a shallow tanh neural network $\widehat{\times}_d^\epsilon: [-M,M]^d\to\mathbb{R}$ of width $3\left\lceil\frac{d+1}{2}\right\rceil\abs{P_{d,d} }$ such that
\begin{equation}
   \ck{\widehat{\times}_d^\epsilon(x)-\prod_{i=1}^d x_i} \leq \epsilon.
\end{equation}
Furthermore, the weights of the network scale as $O(\epsilon^{-d/2})$ for small $\epsilon$.
\end{corollary}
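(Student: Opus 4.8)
The plan is to observe that the map $x\mapsto\prod_{i=1}^d x_i$ is exactly the multivariate monomial $x^\beta$ associated to the multi-index $\beta=(1,\dots,1)\in\N_0^d$, which satisfies $\abs{\beta}=d$ and hence $\beta\in P_{d,d}$. Consequently the required network is obtained by specializing Lemma~\ref{lem:monomial-sobolev} and reading off a single output component.

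Concretely, I would first apply Lemma~\ref{lem:monomial-sobolev} with $n\leftarrow d$, $q\leftarrow d$ and $\omega\leftarrow x$: for any $\epsilon>0$ it yields a shallow tanh neural network $\Psi_{d,d}\colon[-M,M]^d\to\mathbb{R}^{\abs{P_{d,d}}}$ of width $3\left\lceil\frac{d+1}{2}\right\rceil\abs{P_{d,d}}$ with $\max_{\gamma\in P_{d,d}}\ck{\omega^\gamma-(\Psi_{d,d}(\omega))_{\iota(\gamma)}}\le\epsilon$, where $\iota\colon P_{d,d}\to\{1,\dots,\abs{P_{d,d}}\}$ is a bijection. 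I would then set $\widehat{\times}_d^\epsilon(x):=(\Psi_{d,d}(x))_{\iota(\beta)}$ with $\beta=(1,\dots,1)$. Selecting this coordinate amounts to post-composing $\Psi_{d,d}$ with the linear projection $y\mapsto y_{\iota(\beta)}$, which can be absorbed into the final affine map of $\Psi_{d,d}$; hence $\widehat{\times}_d^\epsilon$ is again a shallow tanh network with a hidden layer of width $3\left\lceil\frac{d+1}{2}\right\rceil\abs{P_{d,d}}$. Since $x^\beta=\prod_{i=1}^d x_i$, the claimed bound $\ck{\widehat{\times}_d^\epsilon(x)-\prod_{i=1}^d x_i}\le\epsilon$ follows immediately.

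For the weights, Lemma~\ref{lem:monomial-sobolev} guarantees a scaling of $O\!\left(\epsilon^{-d/2}(d(d+2))^{3(d+2)^2}\right)$ for small $\epsilon$. Because $d$ (as well as $k$ and $M$) is fixed, the factor $(d(d+2))^{3(d+2)^2}$ together with all other $d$-, $k$- and $M$-dependent quantities is an absolute constant, so the weights scale as $O(\epsilon^{-d/2})$; merging the projection into the last affine layer does not change this order.

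There is essentially no real obstacle here, as the statement is a direct corollary of Lemma~\ref{lem:monomial-sobolev}; the only points deserving an explicit line are the verification that $\beta=(1,\dots,1)\in P_{d,d}$ (so that $\prod_i x_i$ is among the monomials covered by that lemma) and the observation that extracting one output coordinate preserves both the shallow architecture and the stated width without degrading the weight bound. If a width estimate purely in terms of $d$ is desired, one may additionally invoke Lemma~\ref{lem:size-pqn} to replace $\abs{P_{d,d}}$ by $5^d$.
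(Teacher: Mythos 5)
Your proposal is correct and matches the paper's own proof, which likewise obtains the result by specializing Lemma~\ref{lem:monomial-sobolev} with $n \leftarrow d$, $q \leftarrow d$, $\omega \leftarrow x$ and reading off the component for $\beta=(1,\dots,1)\in P_{d,d}$. The extra details you supply (absorbing the coordinate projection into the output affine map, noting that $d$-dependent factors are constants in the weight bound) are exactly the routine checks the paper leaves implicit.
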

\begin{proof}
The statement follows directly from Lemma \ref{lem:monomial-sobolev} with $n \leftarrow d$, $q \leftarrow d$ and $\omega \leftarrow x$, where $x\in[-M,M]^d$.
\end{proof}

One issue with this shallow approximation is that the width of the network grows quickly with the dimension. The next lemma shows that the same accuracy can also be obtained using a deep tanh neural network for which both width and depth scale at most linearly with the input dimension. 

\begin{lemma}[Deep approximation of multiplication of $d$ numbers]\label{lem:mult-deep}
Let $d\in \mathbb{N}$, $k\in \mathbb{N}_0$ and $M>0$. Then for every $\epsilon>0$, there exists a tanh neural network $\widehat{\times}_d^\epsilon: [-M,M]^d\to\mathbb{R}$ with $\lceil \log_2(d)\rceil$ hidden layers and of width at most $3d$ such that
\begin{equation}
   \ck{\widehat{\times}_d^\epsilon(x)-\prod_{i=1}^d x_i} \leq \epsilon.
\end{equation}
Furthermore, the weights of the network scale as $O(\epsilon^{-1/2})$ for small $\epsilon$.
\end{lemma}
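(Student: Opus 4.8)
The plan is a divide-and-conquer construction that reduces the problem to the case $d=2$. The core building block is a shallow tanh network $\widehat{\times}_2^{\delta}$ of width at most $4$ that, for a given compact range and accuracy $\delta>0$, satisfies $\|\widehat{\times}_2^{\delta}(x,y)-xy\|_{W^{k,\infty}}\le\delta$ with weights $O(\delta^{-1/2})$. Via the polarization identity $xy=\tfrac14\big((x+y)^2-(x-y)^2\big)$ this reduces to approximating the univariate map $z\mapsto z^2$, for which I would take the two-neuron symmetric difference $\widehat{\mathrm{sq}}(z)=\big(\sigma(vz+c)+\sigma(-vz+c)-2\sigma(c)\big)/\big(\sigma''(c)v^2\big)$. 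Expanding $\sigma$ in a Taylor series around $c$ gives $\widehat{\mathrm{sq}}(z)=z^2+\tfrac{\sigma^{(4)}(c)}{12\,\sigma''(c)}v^2z^4+O(v^4)$; the crucial point is to pick the shift $c=c^{\ast}$ at a zero of $\sigma^{(4)}$ (explicitly $c^{\ast}=\operatorname{arctanh}\sqrt{2/3}$, at which $\sigma''(c^{\ast})\ne 0$, since $\sigma^{(4)}(x)=8\tanh(x)\big(1-\tanh^2(x)\big)\big(2-3\tanh^2(x)\big)$), so that the $v^2z^4$ term vanishes and the approximation becomes $O(v^4)$-accurate uniformly on the range, also in every fixed $W^{k,\infty}$-norm. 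As the only large weight of $\widehat{\mathrm{sq}}$ is the output weight $\sim v^{-2}$ and $\delta\sim v^4$, the weights scale like $\delta^{-1/2}$, which is the source of the $O(\epsilon^{-1/2})$ in the statement; without the special shift one would only get $O(v^2)$ accuracy and $O(\epsilon^{-1})$ weights.

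With the block at hand, I would arrange $x_1,\dots,x_d$ at the leaves of a balanced binary tree: at each level, every pair of current nodes is replaced by the approximate product of their values through a parallel copy of $\widehat{\times}_2$, and, when the number of current nodes is odd, the unpaired node is carried to the next level through a two-neuron near-identity $x\mapsto a_1\sigma(v_1x)+a_2\sigma(v_2x)$ with $a_1v_1+a_2v_2=1$, $a_1v_1^3+a_2v_2^3=0$ (this is $O(v^4)$-accurate with weights $O(v^{-1})$, so no worse than the product blocks). Since consecutive affine maps compose, stacking the $\lceil\log_2 d\rceil$ levels produces a single tanh network with exactly $\lceil\log_2 d\rceil$ hidden layers; the number of neurons in level $j$ is $4\lfloor m_{j-1}/2\rfloor$ plus at most two carry neurons, where $m_{j-1}\le\lceil d/2^{\,j-1}\rceil$ is the number of nodes at the previous level, hence at most $2d+2\le 3d$, attained essentially at the first level. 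I would also fix, level by level, a compact range $[-R_j,R_j]$ with $R_j\le (M+1)^{2^{j}}$ on which the level-$j$ blocks are built, and choose $v$ small enough at each level so that all arguments $v(x\pm y)+c^{\ast}$ lie inside the disc of convergence of the Taylor expansion of $\tanh$ at $c^{\ast}$.

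The accuracy is then established by an induction over the levels. Writing $p_S=\prod_{i\in S}x_i$ for the true partial product over a tree node $S=S'\cup S''$ and $\widehat p_S$ for the corresponding network value, one splits $\|\widehat p_S-p_S\|_{W^{k,\infty}}$ into the new block's intrinsic error $\|\widehat{\times}_2(\widehat p_{S'},\widehat p_{S''})-\widehat p_{S'}\widehat p_{S''}\|_{W^{k,\infty}}$, controlled by $\delta$ up to a constant via a Faà di Bruno / chain-rule estimate (Lemma~\ref{lem:faa-di-bruno}) applied to $\widehat{\times}_2$ composed with $(\widehat p_{S'},\widehat p_{S''})$, and the propagated error $\|\widehat p_{S'}\widehat p_{S''}-p_{S'}p_{S''}\|_{W^{k,\infty}}$, which the Leibniz rule bounds by a constant times $\|\widehat p_{S'}\|_{W^{k,\infty}}\|\widehat p_{S''}-p_{S''}\|_{W^{k,\infty}}+\|p_{S''}\|_{W^{k,\infty}}\|\widehat p_{S'}-p_{S'}\|_{W^{k,\infty}}$. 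Carrying along uniform bounds $\|\widehat p_S\|_{W^{k,\infty}}\le 2(M+1)^{|S|}$ (valid once $\delta$ is small), the error is multiplied by a factor $C=C(k,M,d)$ at each of the $\lceil\log_2 d\rceil$ levels, so that the final error is at most $C(k,M,d)\,\delta$. Choosing $\delta=\epsilon/C(k,M,d)$ gives the stated accuracy, and substituting this $\delta$ into the block weight bound $O(\delta^{-1/2})$ yields the weight scaling $O(\epsilon^{-1/2})$.

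The main obstacle is the Sobolev-norm error propagation through the composition. In contrast with an $L^{\infty}$-only argument, differentiating $\widehat{\times}_2(\widehat p_{S'},\widehat p_{S''})$ brings in, via the chain and Leibniz rules, products of derivatives of $\widehat p_{S'}$ and $\widehat p_{S''}$ up to order $k$, so one must propagate not merely the errors but also uniform $W^{k,\infty}$ bounds on all intermediate partial products, and check that neither these bounds nor the per-level amplification constants degenerate as $d$ grows (they grow only polynomially in $d$ for fixed $k$ and $M$, since there are only $\lceil\log_2 d\rceil$ levels). A secondary technical point is the parity/width bookkeeping needed to certify the width bound $3d$ including the carry neurons, together with verifying that the shift $c^{\ast}$ indeed annihilates $\sigma^{(4)}$ while keeping $\sigma''$ bounded away from zero on the relevant ranges.
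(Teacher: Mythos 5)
Your construction follows the same route as the paper's proof: the polarization identity $xy=\tfrac14\left((x+y)^2-(x-y)^2\right)$ reduces everything to approximating $z\mapsto z^2$ by a symmetric second difference of $\sigma$, the $d$-fold product is assembled as a balanced binary tree of depth $\lceil\log_2 d\rceil$ with near-identity carries for unpaired nodes, and the error is propagated through the levels with the Leibniz and Fa\`a di Bruno estimates (Lemmas \ref{lem:leibniz} and \ref{lem:faa-di-bruno}). The one place where you genuinely depart from the paper is the choice of the expansion point: the paper takes any $x_0$ with $\sigma''(x_0)\neq 0$, for which the normalized second difference satisfies $\widehat{\mathrm{sq}}(z)=z^2+O(h^2z^4)$, so the output weight $\sim h^{-2}$ scales like $\epsilon^{-1}$ rather than the claimed $\epsilon^{-1/2}$; your choice $c^{\ast}=\operatorname{arctanh}\sqrt{2/3}$, a zero of $\sigma^{(4)}$ at which $\sigma''\neq 0$, kills the $O(v^2z^4)$ term and upgrades the block accuracy to $O(v^4)$, which is exactly what is needed to make the stated weight bound $O(\epsilon^{-1/2})$ come out of the output weight $\sim v^{-2}$. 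In other words, your refinement supplies a justification for the weight scaling that the paper's sketch does not obviously deliver. The remaining bookkeeping (uniform $W^{k,\infty}$ bounds on the intermediate partial products, per-level amplification constants, and the width count $4\lfloor d/2\rfloor+2\le 3d$) is sound and consistent with the lemma as stated.
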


\begin{proof}
Using the finite difference approach \eqref{eq:central-fd}, we can approximate the quadratic function using $\delta^2_h[f](x_0)$ for some $h>0$ and $x_0\in[-1,1]$ such that $\sigma^{(2)}(x_0)\neq 0$. Observing that
\begin{equation}\label{eq:xy}
    xy = \frac{1}{4}\left((x+y)^2-(x-y)^2\right)
\end{equation}
then provides a recipe to approximate (in Sobolev norm) the multiplication of two numbers using a shallow tanh neural network with 6 neurons in its hidden layer. The proof is similar to that of Lemma \ref{lem:uni-mon-sobolev}. Moreover, Lemma \ref{lem:uni-mon-sobolev} shows as well that the identity can be approximated using a shallow tanh neural network with only one neuron in its hidden layer. 

The multiplication of $d$ numbers then follows easily from the multiplication of $2$ numbers. In e.g. \cite[Proposition 2.36]{opschoor2019exponential}, it is proven that the multiplication of $d$ numbers requires a neural network in the form of a binary tree of depth $\lceil \log_2(d)\rceil$ where each node computes the (approximate) multiplication of two numbers. The proof of our error bound follows from Lemma \ref{lem:leibniz} and \ref{lem:faa-di-bruno}. 
\end{proof}

\begin{remark}
For simplicity and motivated by its widespread use, we only focused on the hyperbolic tangent activation function here. Our approach can be generalized to any activation function $\phi$ for which there exist $\mathcal{P}\subseteq \mathbb{N}$ with $\sup \mathcal{P} = \infty$ and an explicitly known function $\gamma:\mathcal{P}\to (0,\infty)$ with $\abs{\phi^{(p)}} \geq \gamma(p)$ for all $p\in \mathcal{P}$. Monomials with degree $p\in \mathcal{P}$ can be constructed as in \eqref{eq:def-monomial}, the construction of monomials with degree $p\in \mathbb{N}\setminus\mathcal{P}$ is similar to the one described for multivariate polynomials. 
\end{remark}

\section{Approximation of partition of unity}\label{sec:pou}

Once we have approximated polynomials with shallow tanh neural networks, the next step in our construction is to approximate a suitable partition of unity. In this section, we show how one can mimic a partition of unity using tanh neural networks. We recall that a partition of unity is a set of functions $f_i:[0,1]^d\to[0,1]$ such that every $f_i$ is non-zero on only a small part of $[0,1]^d$ and such that $\sum_i f_i = 1$. For ReLU and RePU neural networks, such partitions of unity can be constructed exactly \cite{yarotsky2017error}. For tanh neural networks, we will prove that an approximate partition of unity can be constructed. A unifying framework for approximating partitions of unity by general neural networks has been proposed in \cite{guhring2021approximation}.

Let $d,N\in\mathbb{N}$ and $k\in\mathbb{N}_0$. For every $j\in\mathbb{N}^d$ with $\norm{j}_\infty \leq N$ we define $x^N_j$ such that $(x^N_j)_i = j_i/N_i$. We also define
\begin{equation}
    I_j^N = \bigtimes_{i=1}^d \left((j_i-1)/N,j_i/N\right). 
\end{equation}
Let $R >0$ be such that {\color{black}$\vert\sigma^{(m)}\vert$} is decreasing on $[R,\infty)$ for every $1\leq m\leq k$. Given $\epsilon > 0$, we first find an $\alpha = \alpha(N,\epsilon)$ large enough such that
\begin{align}\label{eq:alpha}
    \alpha/N \geq R, \quad 1 - \sigma(\alpha/N) \leq \epsilon, \quad \alpha^m \abs{\sigma^{(m)}(\alpha/N)} \leq \epsilon \text{  for all } 1\leq m \leq k. 
\end{align}
This is possible because $\lim_{x\to\infty} \sigma(x)= 1$ and because of Lemma \ref{lem:bound-der-tanh}. In particular, Lemma \ref{lem:alpha-growth} shows that a suitable choice of $\alpha$ is given by 
\begin{equation}
   \alpha = N \max\left\{R,\ln(\frac{(2k)^{k+1}(Nk)^k}{e^k\epsilon})\right\}.
\end{equation}
For $y\in \mathbb{R}$, we then define
\begin{align}
    \rho_1^N(y) &= \frac{1}{2}-\frac{1}{2}\sigma\left(\alpha\left(y-\frac{1}{N}\right)\right),\\
    \rho_j^N(y) &= \frac{1}{2}\sigma\left(\alpha\left(y-\frac{j-1}{N}\right)\right) - \frac{1}{2}\sigma\left(\alpha\left(y-\frac{j}{N}\right)\right)\quad \text{for } 2\leq j \leq N-1,\\
    \rho_N^N(y) &= \frac{1}{2}\sigma\left(\alpha\left(y-\frac{N-1}{N}\right)\right)+\frac{1}{2}.
\end{align}
In the remainder of the paper, we will assume for simplicity that $\rho_j^N$ is always of the second form. The calculations involving $\rho_1^N$ and $\rho_N^N$ can be done entirely similarly and do not change the stated results. 
Finally, we define for $D\leq d$ the functions
\begin{equation}
    \Phi^{N,D}_j(x) = \prod^{D}_{i=1} \rho_{j_i}^{N_i}(x_i)
\end{equation}
and the sets $\mathcal{V}_D = \{v\in\mathbb{Z}^d: \max_{1\leq i \leq D}\abs{v_i}\leq 1 \text{ and } v_{D+1}=\cdots = v_d = 0\}$. We will prove that the functions $\Phi^{N,d}_j$ approximate a partition of unity in the sense that for every $j$ it holds on $I_j^N$ that,
\begin{equation}
    \sum_{v\in\mathcal{V}_d}\Phi^{N,d}_{j+v} \approx 1 \quad \text{and} \quad \sum_{\substack{v\not\in\mathcal{V}_d,\\ j+v \in \{1,\ldots, N\}^d}}\Phi^{N,d}_{j+v} \approx 0.
\end{equation}
An example for $d=1$ and $N=7$ is shown in Figure \ref{fig:pou}. The next two lemmas formalize this approximation. Finally, a tanh neural network approximation of $\Phi^{N,d}_j$ can be constructed by replacing the multiplication operator by the network from e.g. Corollary \ref{cor:mult-shallow} or Lemma \ref{lem:mult-deep}.

\begin{figure}
    \centering
    
    \begin{tikzpicture}
\draw[->] (-1,0) -- (8,0);
\draw[->] (0,-1) -- (0,3);
\def\a{1.9}
\draw[domain=0:3,smooth,variable=\x,red] plot ({\x}, 
 {1-tanh(\a*(\x-1))});
 \draw[domain=0:4,smooth,variable=\x,red] plot ({\x}, 
 {tanh(\a*(\x-1))-tanh(\a*(\x-2))});
 \draw[domain=0:5,smooth,variable=\x,blue] plot ({\x}, 
 {tanh(\a*(\x-2))-tanh(\a*(\x-3))});
 \draw[domain=1:6,smooth,variable=\x,blue] plot ({\x}, 
 {tanh(\a*(\x-3))-tanh(\a*(\x-4))});
\draw[domain=2:7,smooth,variable=\x,blue] plot ({\x}, 
 {tanh(\a*(\x-4))-tanh(\a*(\x-5))});
 \draw[domain=3:7,smooth,variable=\x,red] plot ({\x}, 
 {tanh(\a*(\x-5))-tanh(\a*(\x-6))});
 \draw[domain=4:7,smooth,variable=\x,red] plot ({\x}, 
 {tanh(\a*(\x-6))+1});
 
 \draw[domain=0:7,smooth,variable=\x,blue,thick] plot ({\x}, 
 {tanh(\a*(\x-2))-tanh(\a*(\x-5))});
 \draw[domain=0:7,smooth,variable=\x,red, thick] plot ({\x}, 
 {2-tanh(\a*(\x-2))+tanh(\a*(\x-5))});
 \draw[domain=0:7,dashed,variable=\x,black] plot ({\x}, 2);
 
 \node[left] at (0,2) {$1$};
 \node[below] at (-0.2,0) {$0$};
 \foreach \i in {1,...,6}
{
         \node[below] at (\i,0) {$\frac{\i}{7}$};
}
\node[below] at (7,0) {$1$};
\node[below,gray] at (3.5,-0.4) {$I_4^7$};
\node[above,blue] at (4.5,2) {\small $\sum_{v\in\mathcal{V}_1}\Phi^{7,1}_{5+v}$};
\node[above,red] at (1.5,2) {\small $\sum_{v\not\in\mathcal{V}_1,\: 1 \leq 4+v \leq 7}\Phi^{7,1}_{4+v}$};
 
 \fill [gray, opacity=0.2] (3,-1) rectangle (4,3);
 
\end{tikzpicture}
    
    \caption{Example of an approximate partition of unity on $[0,1]$ with $N=7$. The thin lines represent the $\Phi^{7,1}_{j} = \rho^7_j$, $1\leq j \leq 7$.
    }
    \label{fig:pou}
\end{figure}
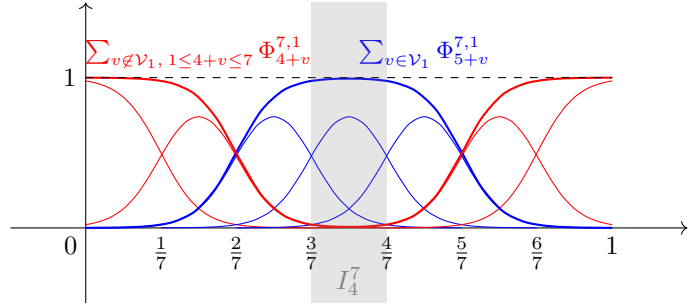

\begin{lemma}\label{lem:indicator-close}
If $0<\epsilon < 1/4$, then 
\begin{equation}
     \norm{\sum_{v\in \mathcal{V}_d}\Phi^{N,d}_{j+v}-1}_{W^{k,\infty}(I_j^N)} \leq 2^{dk} d\epsilon.
\end{equation}
\end{lemma}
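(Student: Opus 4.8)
The plan is to prove the estimate by induction on the dimension $D$ (proving it for $\Phi^{N,D}_j$ and $\mathcal{V}_D$ rather than just $D=d$), since the functions $\Phi^{N,D}_j$ are products over coordinates and the telescoping structure that makes $\sum_v \Phi$ close to $1$ is cleanest to track one coordinate at a time. First I would handle the base case $D=1$: for a fixed index $j$, the sum $\sum_{v\in\mathcal{V}_1}\rho^N_{j+v} = \rho^N_{j-1}+\rho^N_j+\rho^N_{j+1}$ telescopes, using the definition $\rho^N_j(y)=\tfrac12\sigma(\alpha(y-\tfrac{j-1}{N}))-\tfrac12\sigma(\alpha(y-\tfrac{j}{N}))$, to $\tfrac12\sigma(\alpha(y-\tfrac{j-2}{N}))-\tfrac12\sigma(\alpha(y-\tfrac{j+1}{N}))$. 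On $I_j^N$ we have $y-\tfrac{j-2}{N}\ge \tfrac1N$ and $\tfrac{j+1}{N}-y \ge \tfrac1N$, so using $\alpha/N\ge R$, $1-\sigma(\alpha/N)\le\epsilon$ and the oddness of $\sigma$, the value of this sum is within $\epsilon$ of $1$, and each of its derivatives of order $1\le m\le k$ is bounded by $\tfrac12\alpha^m|\sigma^{(m)}(\alpha(y-\tfrac{j-2}{N}))| + \tfrac12\alpha^m|\sigma^{(m)}(\alpha(y-\tfrac{j+1}{N}))| \le \alpha^m|\sigma^{(m)}(\alpha/N)| \le \epsilon$, invoking the monotonicity of $|\sigma^{(m)}|$ on $[R,\infty)$ from the choice of $\alpha$. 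This gives the $D=1$ bound $\|\sum_{v\in\mathcal V_1}\Phi^{N,1}_{j+v}-1\|_{W^{k,\infty}(I_j^N)}\le \epsilon$, which is the claimed $2^{k}\cdot 1\cdot\epsilon$ with room to spare.

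Next, for the induction step I would write $\sum_{v\in\mathcal{V}_{D}}\Phi^{N,D}_{j+v}(x) = \big(\sum_{v\in\mathcal{V}_{D-1}}\Phi^{N,D-1}_{j+v}(x_1,\dots,x_{D-1})\big)\cdot\big(\sum_{w\in\{-1,0,1\}}\rho^N_{j_D+w}(x_D)\big)$ — this factorization holds because $\mathcal{V}_D$ is exactly the product of $\mathcal{V}_{D-1}$ (in the first $D-1$ slots) with $\{-1,0,1\}$ in slot $D$, and $\Phi^{N,D}$ factors coordinatewise. Then I would subtract $1$ by the telescoping trick $AB - 1 = (A-1)B + (B-1)$, where $A = \sum_{v\in\mathcal V_{D-1}}\Phi^{N,D-1}_{j+v}$ and $B = \sum_w\rho^N_{j_D+w}$, and estimate in $W^{k,\infty}(I_j^N)$ using the submultiplicativity of the $W^{k,\infty}$ norm up to a combinatorial constant: $\|fg\|_{W^{k,\infty}}\le 2^k\|f\|_{W^{k,\infty}}\|g\|_{W^{k,\infty}}$ (Leibniz rule, possibly recorded as Lemma \ref{lem:leibniz} in the paper). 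The induction hypothesis gives $\|A-1\|_{W^{k,\infty}(I_j^N)}\le 2^{(D-1)k}(D-1)\epsilon$; the $D=1$ argument applied to the single coordinate $x_D$ gives $\|B-1\|_{W^{k,\infty}}\le\epsilon$; and $\|B\|_{W^{k,\infty}}\le 1+\epsilon$. Assembling: $\|AB-1\|_{W^{k,\infty}}\le 2^k\|A-1\|\,\|B\| + \|B-1\| \le 2^k\cdot 2^{(D-1)k}(D-1)\epsilon\cdot(1+\epsilon) + \epsilon$. Using $\epsilon<1/4$ so that $1+\epsilon\le 2$ (or more carefully absorbing the factor), this is $\le 2^{Dk}\,D\,\epsilon$, completing the induction and yielding the statement at $D=d$.

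The main obstacle I expect is bookkeeping the constants so that the clean bound $2^{dk}d\epsilon$ comes out, rather than something like $(2^{k+1})^d\epsilon$ — in particular getting the $(1+\epsilon)$ factors and the Leibniz constant $2^k$ to telescope into exactly $2^{dk}$ requires a little care (for instance one wants $2^k\cdot(1+\epsilon) \le 2^k\cdot 2$ absorbed against the gap between $2^{(D-1)k}\cdot 2^k=2^{Dk}$ and the coefficient actually needed, and one uses the slack from the $D=1$ bound being $\epsilon$ rather than $2^k\epsilon$). A secondary point to verify is that on $I_j^N$ all the relevant arguments $\alpha(x_i - (j_i+w-1)/N)$ for the surviving terms indeed lie in $[R,\infty)$ or $(-\infty,-R]$ so that the monotonicity of $|\sigma^{(m)}|$ applies uniformly; this is exactly what the three conditions in \eqref{eq:alpha} were set up to guarantee, so it should go through directly. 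Everything else — the telescoping, the Taylor-free derivative bounds via \eqref{eq:alpha}, and the elementary inequalities — is routine.
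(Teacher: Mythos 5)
Your proposal follows the paper's argument essentially verbatim: the same induction on the coordinate dimension $D$, the same telescoping of $\rho^N_{j_1-1}+\rho^N_{j_1}+\rho^N_{j_1+1}$ in the base case, the same derivative bounds obtained from the three conditions in \eqref{eq:alpha}, and the same coordinatewise factorization combined with the Leibniz estimate (Lemma \ref{lem:leibniz}) in the induction step. The one place where your bookkeeping, as written, does not close is the bound $\norm{B}_{W^{k,\infty}(I_j^N)}\le 1+\epsilon$ for $B=\sum_{w}\rho^N_{j_D+w}(x_D)$: with that, the induction step yields $2^{Dk}(D-1)(1+\epsilon)\epsilon+\epsilon$, and neither the crude absorption $1+\epsilon\le 2$ nor the slack from the base case brings this down to $2^{Dk}D\epsilon$ uniformly in $D$ (already for $k=0$ one would need $(D-1)\epsilon\le 0$, and for $k\ge 1$ the requirement $(D-1)\epsilon\le 1-2^{-Dk}$ fails for $D\ge 5$ when $\epsilon$ is close to $1/4$). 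The fix is the sharper bound $\norm{B}_{W^{k,\infty}(I_j^N)}\le 1$, which your own base-case computation already delivers: the telescoped form
\begin{equation*}
B(x_D)=\tfrac12\sigma\!\left(\alpha\big(x_D-\tfrac{j_D-2}{N}\big)\right)-\tfrac12\sigma\!\left(\alpha\big(x_D-\tfrac{j_D+1}{N}\big)\right)
\end{equation*}
satisfies $\abs{B}\le 1$ pointwise since $\abs{\sigma}<1$, and its derivatives of order $1\le m\le k$ are bounded by $\alpha^m\abs{\sigma^{(m)}(\alpha/N)}\le\epsilon<1$ on $I_j^N$. With $\norm{B}\le 1$ the step reads $2^k\cdot 2^{(D-1)k}(D-1)\epsilon+\epsilon\le 2^{Dk}D\epsilon$, which is exactly the paper's computation.
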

\begin{proof}
We will prove the statement holds by induction on $d$. We first note that, for $d=1$, we have
\begin{align} \label{eq:V1}
\begin{aligned}
    \sum_{v\in \mathcal{V}_1}\Phi^{N,1}_{j+v}(x) 
    &= \sum_{l=-1}^1 \rho_{j_1+l}^N(x_1)
    \\
    &=\frac{1}{2}\sigma\left(\alpha\left(x_1-\frac{j_1-2}{N}\right)\right) - \frac{1}{2}\sigma\left(\alpha\left(x_1-\frac{j_1+1}{N}\right)\right),
\end{aligned}   
\end{align}
from which easily follows that
\begin{equation}
    \sum_{v\in \mathcal{V}_1}\Phi^{N,1}_{j+v}(x) \leq 1. 
\end{equation}
Next, note that for $x\in I_j^N$
\begin{align}
\begin{split}
\sum_{v\in \mathcal{V}_1}\Phi^{N,1}_{j+v}(x) &=  \frac{1}{2}\sigma\left(\alpha\left(x_1-\frac{j_1-2}{N}\right)\right) - \frac{1}{2}\sigma\left(\alpha\left(x_1-\frac{j_1+1}{N}\right)\right)\\
&\geq \sigma\left(\frac{\alpha}{N}\right) \geq  1-\epsilon,
\end{split}
\end{align}
where we used the definition of $\alpha$ on the last line. 
Furthermore, for $1\leq m\leq k$, we get that
\begin{equation}
   \abs{\frac{d^m}{dx^m} \sum_{v\in \mathcal{V}_1}\Phi^{N,1}_{j+v}(x)} \leq \alpha^m \sigma^{(m)}\left(\frac{\alpha}{N}\right) \leq \epsilon,
\end{equation}
where we used \eqref{eq:V1} and the monotonic decay of $\sigma^{(m)}(x)$ for $x\in [\alpha/N,\infty)$ and our choice of $\alpha$ (cf. equation \eqref{eq:alpha}).
This allows us to conclude that
\begin{equation}
    \norm{\sum_{v\in \mathcal{V}_1}\Phi^{N,1}_{j+v}(x)-1}_{W^{k,\infty}(I_j^N)} \leq \epsilon. 
\end{equation}
For the induction step, we assume that for some $2 \leq D \leq d$ it holds that
\begin{equation}
    \norm{\sum_{v\in \mathcal{V}_{D-1}}\Phi^{N,D-1}_{j+v}-1}_{W^{k,\infty}(I_j^N)} \leq 2^{(D-1)k}(D-1) \epsilon.
\end{equation}  
Using Lemma \ref{lem:leibniz}, we find that for $x\in I_j^N$,
\begin{align}
\begin{split}
    \norm{\sum_{v\in \mathcal{V}_D}\Phi^{N,D}_{j+v}(x)-1}_{W^{k,\infty}(I_j^N)} &= \norm{\sum_{w\in \mathcal{V}_1}  \rho_{j_D+w}^{N}(x_D) \sum_{v\in \mathcal{V}_{D-1}}\Phi^{N,D-1}_{j+v}(x)-1}_{W^{k,\infty}(I_j^N)}\\
    &\leq \norm{\sum_{w\in \mathcal{V}_1}  \rho_{j_D+w}^{N}(x_D) -1}_{W^{k,\infty}(I_j^N)} \\
    &\quad + 2^k\norm{\sum_{w\in \mathcal{V}_1}  \rho_{j_D+w}^{N}(x_D)}_{W^{k,\infty}(I_j^N)} \norm{\sum_{v\in \mathcal{V}_{D-1}}\Phi^{N,D-1}_{j+v}(x)-1}_{W^{k,\infty}(I_j^N)}\\
    &\leq \epsilon + 2^k  2^{(D-1)k}(D-1)\epsilon \leq 2^{Dk} D\epsilon.
\end{split}    
\end{align}
This concludes the proof. 
\end{proof}

\begin{lemma}\label{lem:indicator-far}
Let $k\in\mathbb{N}_0$ and $v\in\mathbb{Z}^d$ with $\norm{v}_\infty\geq 2$. Then it holds that
\begin{equation}
     \norm{\Phi^{N,d}_{j+v}}_{W^{k,\infty}(I_j^N)} \leq \max\{1,(2k)^{2k} \alpha^k\} \epsilon.
\end{equation}
\end{lemma}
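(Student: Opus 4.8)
The plan is to reduce everything to one–dimensional estimates by exploiting the tensor–product structure $\Phi^{N,d}_{j+v}(x)=\prod_{i=1}^d\rho^N_{j_i+v_i}(x_i)$. Since the factors depend on distinct variables, for every multi-index $\beta$ with $\abs\beta\le k$ one has $D^\beta\Phi^{N,d}_{j+v}(x)=\prod_{i=1}^d(\rho^N_{j_i+v_i})^{(\beta_i)}(x_i)$, so that
\[
    \norm{\Phi^{N,d}_{j+v}}_{W^{k,\infty}(I_j^N)}=\max_{\abs\beta\le k}\;\prod_{i=1}^d\norm{(\rho^N_{j_i+v_i})^{(\beta_i)}}_{L^\infty(I^N_{j_i})}.
\]
Thus it suffices to control each one–dimensional factor. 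Two elementary observations will be used repeatedly: $0<\rho^N_m<1$ on all of $\mathbb R$ for every $m\in\mathbb Z$ (a difference of two ordered, bounded sigmoids), and, for $m'\ge1$, $(\rho^N_m)^{(m')}(y)=\tfrac{\alpha^{m'}}2\big(\sigma^{(m')}(\alpha(y-\tfrac{m-1}N))-\sigma^{(m')}(\alpha(y-\tfrac mN))\big)$, whence $\norm{(\rho^N_m)^{(m')}}_{L^\infty(\mathbb R)}\le\alpha^{m'}\norm{\sigma^{(m')}}_{L^\infty(\mathbb R)}\le\alpha^{m'}(2m')^{m'+1}$ by Lemma~\ref{lem:bound-der-tanh}.

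The heart of the argument is the coordinate $i_0$ with $\abs{v_{i_0}}\ge2$, which exists since $\norm v_\infty\ge2$. For $x_{i_0}\in I^N_{j_{i_0}}$, a direct computation shows that if $v_{i_0}\ge2$ then both arguments $\alpha(x_{i_0}-\tfrac{j_{i_0}+v_{i_0}-1}N)$ and $\alpha(x_{i_0}-\tfrac{j_{i_0}+v_{i_0}}N)$ are $\le-\alpha/N\le-R$, whereas if $v_{i_0}\le-2$ both are $\ge\alpha/N\ge R$. Using that $\abs{\sigma^{(m)}}$ is even and, by the choice of $R$, decreasing on $[R,\infty)$, together with the defining relations \eqref{eq:alpha} for $\alpha$, I would conclude $\norm{(\rho^N_{j_{i_0}+v_{i_0}})^{(m)}}_{L^\infty(I^N_{j_{i_0}})}\le\alpha^m\abs{\sigma^{(m)}(\alpha/N)}\le\epsilon$ for $1\le m\le k$. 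For $m=0$ the same region estimate forces $\rho^N_{j_{i_0}+v_{i_0}}$ to lie within $\tfrac12(1-\sigma(\alpha/N))\le\tfrac\epsilon2$ of $0$, using $\sigma(\alpha/N)\ge1-\epsilon$. So the $i_0$–factor is $\le\epsilon$ for every $\beta_{i_0}\le k$, and this is the only place the hypothesis $\norm v_\infty\ge2$ enters.

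It remains to combine. Fix $\beta$ with $\abs\beta\le k$. The $i_0$–factor contributes $\le\epsilon$; each remaining factor with $\beta_i=0$ contributes $\le1$ (by $0<\rho^N_m<1$); each remaining factor with $\beta_i\ge1$ contributes $\le\alpha^{\beta_i}(2\beta_i)^{\beta_i+1}$. Since $\sum_{i\ne i_0}\beta_i\le k$, the product of the $\alpha$–powers is $\le\alpha^k$ (assuming $\alpha\ge1$, which holds for the choice of $\alpha$ above), and since at most $k$ of the $\beta_i$ are nonzero, each bounded by $k$, the product $\prod_{i\ne i_0,\,\beta_i\ge1}(2\beta_i)^{\beta_i+1}\le(2k)^{\sum(\beta_i+1)}\le(2k)^{2k}$, because those exponents sum to at most $2k$. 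Multiplying out and taking the maximum over $\beta$ gives $\norm{\Phi^{N,d}_{j+v}}_{W^{k,\infty}(I_j^N)}\le(2k)^{2k}\alpha^k\epsilon$, and absorbing the trivial case $k=0$ (and any degenerate case $\alpha<1$) into a maximum yields the claimed bound $\max\{1,(2k)^{2k}\alpha^k\}\epsilon$. The main obstacle is not conceptual but bookkeeping: one must check that the two region estimates in coordinate $i_0$ are valid uniformly in $x_{i_0}$ over the whole cell $I^N_{j_{i_0}}$ (not merely at grid points), and that the $m=0$ case is handled without a decay hypothesis on $\sigma^{(0)}$; the combinatorial product over the "near" coordinates is then routine once the exponents are organized so the power of $2k$ never exceeds $2k$ and the power of $\alpha$ never exceeds $k$.
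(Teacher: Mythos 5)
Your proposal is correct and follows essentially the same route as the paper's proof: isolate the coordinate $i_0$ with $\abs{v_{i_0}}\geq 2$, bound that factor by $\epsilon$ using the conditions in \eqref{eq:alpha} and the monotone decay of $\abs{\sigma^{(m)}}$ beyond $R$, bound the remaining factors by $1$ or by $\alpha^{\beta_i}(2\beta_i)^{\beta_i+1}$ via Lemma \ref{lem:bound-der-tanh}, and close with the same exponent bookkeeping giving $(2k)^{2k}\alpha^k$. The only (immaterial) difference is in the $m=0$ estimate for the far coordinate, where you bound the difference of two sigmoid values directly by $1-\sigma(\alpha/N)$ instead of invoking the tanh subtraction identity as the paper does; both yield the required $\leq\epsilon$.
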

\begin{proof}
Let $x\in I_j^N$ and let $1\leq i \leq d$ be an index such that $\abs{v_i}\geq 2$. Using some basic equalities for the hyperbolic tangent function and the definition of $\alpha$, we obtain that
\begin{align}
    \begin{split}
        \abs{\rho_{j_i+v_i}^N(x_i)} &\leq \frac{1}{2}\sigma\left(\frac{2\alpha}{N}\right)-\frac{1}{2}\sigma\left(\frac{\alpha}{N}\right)\\
        &= \frac{1}{2}\sigma\left(\frac{\alpha}{N}\right) \left(1-\sigma\left(\frac{2\alpha}{N}\right)\sigma\left(\frac{\alpha}{N}\right)\right) \\
        & \leq  \frac{1}{2} \left(1-\sigma^2\left(\frac{\alpha}{N}\right)\right) \leq \epsilon. 
    \end{split}
 \end{align}
In addition, for every $1\leq \ell \leq d$, it holds that $\abs{\rho_{j_\ell+v_\ell}^N(x_\ell)} \leq 1$. This implies that
\begin{equation} \label{eq:42-linf}
     \norm{\Phi^{N,d}_{j+v}}_{L^\infty(I_j^N)} \leq \epsilon.
\end{equation}
Let $1\leq m\leq k$, then it holds that (by our choice of the index $i$),
\begin{equation}
    \abs{\frac{d^m}{dx_i^m} \rho_{j_i+v_i}^N(x_i)} \leq \alpha^m \abs{\sigma^{(m)}\left(\frac{\alpha}{N}\right)} \leq \epsilon. 
\end{equation}
Now let $\beta\in\mathbb{N}^d$ such that $1\leq \abs{\beta}\leq k$. Then 
\begin{equation} \label{eq:42-wk}
    \abs{D^\beta \Phi^{N,d}_{j+v}(x)} = \abs{\prod^{d}_{\ell=1} \frac{d^{\beta_\ell}}{dx_\ell^{\beta_\ell}}\rho_{j_\ell+v_\ell}^{N_\ell}(x_\ell)} \leq \epsilon \prod^{d}_{\ell=1, \ell\neq i, \beta_\ell \neq 0} (2\beta_\ell)^{\beta_\ell+1} \alpha^{\beta_\ell} \leq \epsilon (2k)^{2k} \alpha^k,
\end{equation}
where we used the fact that $\abs{\sigma^{(m)}(x)} \le (2m)^{m+1}$ in the first inequality (cf. Lemma \ref{lem:bound-der-tanh}). Combining \eqref{eq:42-linf} and \eqref{eq:42-wk} proves the statement. 
\end{proof}

\section{Main results}
\label{sec:5}

\subsection{Approximation of functions in Sobolev spaces}

We now present the first main result of the paper. It follows from the lemma of Bramble--Hilbert (Lemma \ref{lem:bramble-hilbert2}) that localized Taylor polynomials can approximate a function $f\in W^{s,\infty}([0,1]^d)$. For functions $f\in C^{s}([0,1]^d)$, this approximation follows from Taylor's theorem (Lemma \ref{lem:taylor}). We then use the results from the previous two sections to construct tanh neural networks that approximate localized Taylor polynomials in Sobolev norm. We prove that the function $f$ can be approximated by a tanh neural network with two hidden layers and we provide explicit bounds on the width and approximation error.

\begin{theorem}\label{thm:main}
Let $d,s \in \mathbb{N}$, $R>0$ as in \eqref{eq:alpha}, $\delta>0$ and $f\in W^{s,\infty}([0,1]^d)$. There exist constants $\mathcal{C}(d,k,s,f)$, $N_0(d)>0$, such that for every $N\in\mathbb{N}$ with $N>N_0(d)$ there exists a tanh neural network $\widehat{f}^N$ with two hidden layers, one of width at most $3\left\lceil\frac{s}{2}\right\rceil\abs{P_{s-1,d+1}}+d(N-1)$ and another of width at most $3\left\lceil\frac{d+2}{2}\right\rceil\abs{P_{d+1,d+1}}N^d$ (or $3\left\lceil\frac{s}{2}\right\rceil+N-1$ and $6N$ for $d=1$), such that,
\begin{equation}
     \norm{f-\widehat{f}^N}_{L^\infty([0,1]^d)} \leq  \left(1+\delta\right) \frac{\mathcal{C}(d,0,s,f)}{N^{s}}, 
\end{equation}
and {\color{black}for $k=1, \ldots, s-1$},
\begin{equation}
    \ckunit{f-\widehat{f}^N} \leq  3^d \left(1+{\color{black}\delta}\right) (2(k+1))^{3k} \max\left\{R^k,\ln^k\left(\beta N^{s+d+2}\right)\right\}\frac{\mathcal{C}(d,k,s,f)}{N^{s-k}}, 
\end{equation}
where we define 
\begin{equation}
    \beta = \frac{k^3 2^{d}\sqrt{d}\max\{1,\ckunit{f}^{1/2}\}}{\delta\min\{1,\sqrt{\mathcal{C}(d,k,s,f)}\} }. 
\end{equation}
If $f\in C^{s}([0,1]^d)$, then it holds that
\begin{equation}
    \mathcal{C}(d,k,s,f) = \max_{0\leq \ell \leq k} \frac{1}{(s-\ell)!}\left(\frac{3d}{2}\right)^{s-\ell}\abs{f}_{W^{s,\infty}([0,1]^d)}, \quad N_0(d) = \frac{3d}{2},
\end{equation}
and else it holds that
\begin{equation}
    \mathcal{C}(d,k,s,f) = \max_{0\leq \ell \leq k} \frac{\pi^{1/4}\sqrt{s}}{(s-\ell-1)!} \left(5d^2\right)^{s-\ell}\abs{f}_{W^{s,\infty}([0,1]^d)}, \quad N_0(d) = 5d^2. 
\end{equation}
In addition, the weights of $\widehat{f}^N$ scale as $O\left({\color{black}\mathcal{C}}^{-s/2}N^{d(d+s^2+k^2)/2}(s(s+2))^{3s(s+2)}\right)$.
\end{theorem}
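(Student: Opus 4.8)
The plan is to follow the classical ``localized Taylor polynomial plus partition of unity'' recipe, but with every ingredient replaced by the tanh‑network approximations built in Sections~\ref{sec:3}--\ref{sec:pou}. Fix $N>N_0(d)$ and partition $[0,1]^d$ into the $N^d$ cells $I_j^N$. On the cube $\tilde I_j^N$ consisting of $I_j^N$ together with its neighbours (side $3/N$) let $p_j$ be the order‑$(s-1)$ Taylor polynomial of $f$ when $f\in C^s$ (Lemma~\ref{lem:taylor}) or the averaged Taylor polynomial when $f\in W^{s,\infty}$ (Bramble--Hilbert, Lemma~\ref{lem:bramble-hilbert2}); in both cases $\|f-p_j\|_{W^{\ell,\infty}(\tilde I_j^N)}\le \mathcal C(d,\ell,s,f)\,N^{-(s-\ell)}$ for $\ell\le k$, which is exactly where the constants $\mathcal C$ and the threshold $N_0(d)$ (including the explicit $3d/2$, resp. $5d^2$ and $\pi^{1/4}\sqrt s$ factors, the latter via Lemma~\ref{lem:size-pqn}) come from. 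Let $f_N^\star=\sum_j \Phi^{N,d}_j\,p_j$ with the approximate partition of unity of Section~\ref{sec:pou} (sharpness $\epsilon$ to be fixed), and let $\widehat f^N$ be obtained from $f_N^\star$ by replacing every monomial $x^\beta$, $|\beta|\le s-1$, by the network of Corollary~\ref{cor:mon-mult1} and every product $\Phi^{N,d}_j(x)p_j(x)=\bigl(\prod_{i=1}^d\rho^N_{j_i}(x_i)\bigr)p_j(x)$ of $d+1$ numbers by the shallow multiplication network of Corollary~\ref{cor:mult-shallow}. The monomial subnetwork and the $\rho^N_{j_i}$ nodes sit in the first hidden layer (width $3\lceil s/2\rceil|P_{s-1,d+1}|+d(N-1)$, since $p_j$ is just a linear read‑out of the monomials), and the $N^d$ multiplication blocks sit in the second (width $3\lceil (d+2)/2\rceil|P_{d+1,d+1}|N^d$), which is precisely the architecture claimed; for $d=1$ one has $\Phi^{N,1}_j=\rho^N_j$ (a single node) and univariate monomials, so the widths collapse to $3\lceil s/2\rceil+N-1$ and $6N$, and this case is treated separately.

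For the error I would work cell by cell, writing $\|f-\widehat f^N\|_{W^{k,\infty}(I_j^N)}\le\|f-f_N^\star\|_{W^{k,\infty}(I_j^N)}+\|f_N^\star-\widehat f^N\|_{W^{k,\infty}(I_j^N)}$. On $I_j^N$ the first term splits as
\[
f - f_N^\star = f\Bigl(1-\sum_{v\in\mathcal V_d}\Phi^{N,d}_{j+v}\Bigr) + \sum_{v\in\mathcal V_d}\Phi^{N,d}_{j+v}(f-p_{j+v}) - \sum_{v\notin\mathcal V_d}\Phi^{N,d}_{j+v}\,p_{j+v},
\]
and the three pieces are estimated, respectively, by Lemma~\ref{lem:indicator-close} (times $\|f\|_{W^{k,\infty}}$ and a Leibniz constant), by the Taylor/Bramble--Hilbert bound combined with Leibniz (Lemma~\ref{lem:leibniz}) and the pointwise estimates $|D^\mu\Phi^{N,d}_{j+v}|\lesssim\alpha^{|\mu|}$, and by Lemma~\ref{lem:indicator-far} (times a uniform bound on $\|p_{j+v}\|_{W^{k,\infty}(I_j^N)}$ in terms of $\|f\|_{W^{s-1,\infty}}$, with at most $N^d$ admissible $v$). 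The decisive algebraic point is that in the middle piece the factor $\alpha^{|\mu|}$ from $D^\mu\Phi$ multiplies the factor $N^{-(s-|\gamma|+|\mu|)}$ from $D^{\gamma-\mu}(f-p_{j+v})$, and since $\alpha\sim N\max\{R,\ln(1/\epsilon)\}$ this collapses to $N^{-(s-k)}\max\{R,\ln(1/\epsilon)\}^{k}$ uniformly over $|\gamma|\le k$ and over the $3^d$ neighbours; the ``far'' piece forces $\epsilon$ to be of polynomial order $N^{-(s+d+2)}$ (the constants being packaged into $\beta$), which turns $\ln(1/\epsilon)$ into $\ln(\beta N^{s+d+2})$ and reproduces the stated error bounds, while the ``close'' piece is then negligible and absorbed into $\delta$.

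The second term $\|f_N^\star-\widehat f^N\|_{W^{k,\infty}(I_j^N)}$ involves only the internal accuracies of the monomial and multiplication subnetworks: by Leibniz and Faà di Bruno (Lemmas~\ref{lem:leibniz} and~\ref{lem:faa-di-bruno}) it is bounded by $N^d$ times a factor depending on $d,k,s,\|f\|_{W^{s-1,\infty}}$ times the sum of those two accuracies, so one simply chooses them small enough — this affects only the weights, not the width. The weight bound then follows by tracking the largest weight through the three ingredients: the partition‑of‑unity nodes contribute $\alpha\lesssim N\ln N$, the monomial network of Corollary~\ref{cor:mon-mult1} run at accuracy $\sim N^{-c}\mathcal C$ contributes the $\mathcal C^{-s/2}(s(s+2))^{3s(s+2)}$ factor and a power of $N$, and the multiplication blocks contribute a further power of $N$; collecting exponents yields $O\!\bigl(\mathcal C^{-s/2}N^{d(d+s^2+k^2)/2}(s(s+2))^{3s(s+2)}\bigr)$. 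I expect the main obstacle to be exactly the bookkeeping in the middle piece — obtaining the $\alpha^{|\mu|}N^{-(s-|\gamma|+|\mu|)}$ cancellation uniformly in all multi‑indices and all $3^d$ neighbours — together with committing to a single $\epsilon=\epsilon(N,\delta,d,k,s,f)$ that simultaneously pushes the ``close'', ``far'' and network‑internal errors below $\delta$ times the leading $N^{-(s-k)}$ term, while keeping the explicit constants $\mathcal C$, $N_0$ and the weight exponents in closed form.
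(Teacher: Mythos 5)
Your proposal is correct and follows essentially the same route as the paper's proof: localized Bramble--Hilbert/Taylor polynomials, the approximate partition of unity of Section~\ref{sec:pou}, the monomial and multiplication networks of Corollaries~\ref{cor:mon-mult1} and~\ref{cor:mult-shallow}, the same two-hidden-layer architecture, and the same $\alpha^{\abs{\mu}}\cdot N^{-(s-\abs{\gamma}+\abs{\mu})}$ cancellation with a polynomially small $\epsilon$ producing the $\ln^k(\beta N^{s+d+2})$ factor. The only difference is a trivial regrouping of the triangle-inequality terms (you separate "exact localized polynomials" from "all network replacements," whereas the paper groups the polynomial error with the network-vs-polynomial error), which does not change the argument.
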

\begin{proof}
We will prove the theorem in the following manner. We divide the unit cube into $N^d$ cubes of edge length $1/N$. On each of these cubes, $f$ can be  approximated in Sobolev norm by a polynomial. The global approximation can then be constructed by multiplying each polynomial with the indicator function of the corresponding cubes and summing over all cubes. We then prove that replacing these polynomials, multiplications and indicator functions with the tanh neural networks from the previous sections results in a new approximation that has approximately the same accuracy. In the last step we will calculate the size of the required neural network. 

\textbf{Step 1: construction of the approximation. }
Let us denote $J_j^N = \bigtimes_{i=1}^d \left((j_i-2)/N,(j_i+1)/N\right)$. We calculate that $\text{diam}(J_j^N) = \frac{3\sqrt{d}}{N}$ and that there exists a ball with diameter $\frac{1}{\sqrt{d}} \text{diam}(J_j^N)$ such that $J_j^N$ is star-shaped with respect to every point in this ball. As a consequence, the Bramble-Hilbert lemma (Lemma \ref{lem:bramble-hilbert2}) ensures the existence of a polynomial $p_j^N$ of degree at most $s-1$ such that
\begin{align}\label{eq:pkn-acc-sobolev}
\begin{split}
    \norm{f-p_j^N}_{W^{\ell,\infty}(J_j^N)} &\leq \frac{\pi^{1/4}\sqrt{s}}{(s-\ell-1)!} \left(\frac{5d^2}{N}\right)^{s-\ell} \abs{f}_{W^{s,\infty}([0,1]^d)}  \\ 
    &\leq \max_{0\leq m\leq \ell} \frac{\pi^{1/4}\sqrt{s}(5d^2)^{s-m}}{(s-m-1)!}\frac{\abs{f}_{W^{s,\infty}([0,1]^d)}}{N^{s-\ell}} =: \frac{\mathcal{C}(d,\ell,s,f)}{N^{s-\ell}},
\end{split}
\end{align}
for all $0\leq \ell \leq k$, under the assumption that $N>5d^2$, and where we used that $3\sqrt{e}\leq 5$.
If moreover $f\in C^{s}([0,1]^d)$, then Taylor's theorem (Lemma \ref{lem:taylor} with $\delta = \frac{3}{2N}$) ensures the existence of a polynomial $p_j^N$ of degree at most $s-1$ such that
\begin{align}
\begin{split}
    \norm{f-p_j^N}_{W^{\ell,\infty}(J_j^N)} &\leq \frac{1}{(s-\ell)!}\left(\frac{3d}{2N}\right)^{s-\ell}\abs{f}_{W^{s,\infty}([0,1]^d)} \\
    &\leq \max_{0\leq m\leq \ell} \frac{1}{(s-m)!}\left(\frac{3d}{2}\right)^{s-m} \frac{\abs{f}_{W^{s,\infty}([0,1]^d)}}{N^{s-\ell}}
    =: \frac{\mathcal{C}(d,\ell,s,f)}{N^{s-\ell}},
\end{split}
\end{align}
for all $0\leq \ell \leq k$, under the assumption that $N>3d/2$. The remainder of the argument will be independent of which polynomial $p_j^N$ and which definition of $\mathcal{C}(d,\ell,s,f)$ is used. To simplify notation, we also define $p^N=\sum_j p_j^N \chi_j$, where $\chi_j$ denotes the indicator function on $I_j^N$. 
Next, let $q_j^N$ be a tanh neural network as in Section 1 such that 
\begin{equation}\label{eq:qkn-acc-sobolev}
    \norm{q_j^N-p_j^N}_{W^{k,\infty}([0,1]^d)} \leq \eta. 
\end{equation}
In addition, we define 
\begin{equation}
    q_j^N(x)\widehat{\times} \Phi^{N,d}_j(x) := \widehat{\times}_{d+1}^{h}(q_j^N(x),\phi_{j_1}^{N,d}(x_1), \ldots, \phi_{j_d}^{N,d}(x_d)),
\end{equation}
where $\widehat{\times}_{d+1}^{h}$ is the network from Corollary \ref{cor:mult-shallow} and $h=h(N)$ will be defined in the remainder of the proof. 
We then define our approximation as
\begin{equation}
    \widehat{f}^N(x) = \sum_{j\in \{1,\ldots, N\}^d} q_j^N(x)\widehat{\times} \Phi^{N,d}_j(x). 
\end{equation}

\textbf{Step 2: estimating the error of the approximation. }The triangle inequality gives us
\begin{align}\label{eq:three-terms-sobolev}
\begin{split}
    \ckunit{f-\widehat{f}^N} &\leq \ckunit{f-\sum_{j\in \{1,\ldots, N\}^d} f \cdot \Phi^{N,d}_j} +  \ckunit{\sum_{j\in \{1,\ldots, N\}^d} (f -q_j^N)\cdot \Phi^{N,d}_j}\\
    &+ \ckunit{\sum_{j\in \{1,\ldots, N\}^d} (q_j^N \cdot \Phi^{N,d}_j-q_j^N \widehat{\times} \Phi^{N,d}_j)}
    \end{split}
\end{align}
We proceed by bounding each term of the right hand side separately. 

\textit{Step 2a: First term of \eqref{eq:three-terms-sobolev}.} Let $i\in\{0,\ldots, N\}^d$ be arbitrary. 
Recalling that $\mathcal{V}_d = \{v\in\mathbb{Z}^d: \norm{v}_\infty\leq 1\}$, we observe for $k\geq 1$,
\begin{align}
    \begin{split}
        \cki{f-\sum_{j\in \{1,\ldots, N\}^d} f \cdot \Phi^{N,d}_j} &\leq 2^k \cki{f}\cki{1-\sum_{v\in \mathcal{V}_d} \Phi^{N,d}_{i+v}} \\ & \quad +2^k \cki{f}\cki{ \sum_{\substack{j\in \{1,\ldots, N\}^d \\ j-i\not\in\mathcal{V}_d}} \Phi^{N,d}_{j}}\\
        &\leq 2^k \cki{f} (2^{kd}d\epsilon + N^d (2k)^{2k}\alpha^k \epsilon)\\
        &\leq 2^k \cki{f} 2^{kd}d\epsilon \\
        & \quad + 2^k \cki{f} N^d (2k)^{2k}N^k (k+1)^k \max\left\{R^k,\ln^k\left(\frac{2Nk^2}{\epsilon^{\frac{1}{k+1}}e}\right)\right\}\epsilon\\
        & \leq \delta (2(k+1))^{3k} \max\left\{R^k,\ln^k\left(\frac{2Nk^2}{\epsilon^{\frac{1}{k+1}}e}\right)\right\}\frac{\mathcal{C}(d,k,s,f)}{N^{s-k}},  
    \end{split}
\end{align}
where we used Lemma \ref{lem:leibniz}, Lemma \ref{lem:indicator-close}, Lemma \ref{lem:indicator-far} and Lemma \ref{lem:alpha-growth}, as well as a {\color{black}suitable definition of $\epsilon$, satisfying
\begin{align}\label{eqn:def-eps-1}
\epsilon 
\le
\frac{
\delta \mathcal{C}(d,k,s,f)
}{
2^{(k+1)d}dN^{s+d}\ckunit{f}
}.
\end{align}
} 
Analogously, for $k=0$, one can obtain that 
\begin{equation}
    \linfi{f-\sum_{j\in \{1,\ldots, N\}^d} f \cdot \Phi^{N,d}_j} \leq \linfi{f}( d\epsilon + N^d \epsilon) \leq \frac{\delta}{3} \frac{\mathcal{C}(d,k,s,f)}{N^{s-k}}.
\end{equation}

\textit{Step 2b: Second term of \eqref{eq:three-terms-sobolev} for $k=0$.} In order to bound the second term, we first make some auxiliary calculations. To begin with, we consider the case where $k=0$. We find that
\begin{align}
\begin{split}
\left|
\sum_{v\in \mathcal{V}_d}
(f-q^N_{i+v}) \Phi^{N,d}_{i+v}
\right|
&\le
\max_{v\in \mathcal{V}_d} |f-q^N_{i+v}|
\left(
\sum_{v\in \mathcal{V}_d} |\Phi^{N,d}_{i+v}|
\right)
\\
&=
\max_{v\in \mathcal{V}_d} |f-q^N_{i+v}|
\left|\sum_{v\in \mathcal{V}_d} \Phi^{N,d}_{i+v}\right|
\end{split}
\end{align}
where all functions are evaluated at some $x\in I_i^N$. We can then use the bounds
\begin{equation}
    \abs{f-q_{i+v}^N} \leq \frac{\mathcal{C}(d,0,s,f)}{N^{s}} + \eta,
\end{equation}
which follows from \eqref{eq:pkn-acc-sobolev} and \eqref{eq:qkn-acc-sobolev}, and, 
\begin{equation}
   \abs{\sum_{v\in \mathcal{V}_d} \Phi^{N,d}_{i+v}}  \leq 1+d\epsilon,
\end{equation}
which follows from Lemma \ref{lem:indicator-close}.
As a consequence, we find that
\begin{align}
\begin{split}
    \norm{\sum_{v\in \mathcal{V}_d} (f-q_{i+v}^N) \Phi^{N,d}_{i+v}}_{L^\infty(I_i^N)} &\leq   \left(\frac{\mathcal{C}(d,k,s,f)}{N^{s-k}} + \eta\right)(1+d\epsilon).
\end{split}
\end{align}
Combining this result with the triangle inequality,  \eqref{eq:pkn-acc-sobolev}, \eqref{eq:qkn-acc-sobolev} and Lemma \ref{lem:indicator-far}, we find that
\begin{align}
    \begin{split}
       &\linfi{\sum_{j\in \{1,\ldots, N\}^d} (f -q_j^N)\cdot \Phi^{N,d}_j}\\ &\leq \linfi{\sum_{v\in \mathcal{V}_d} (f-q_{i+v}^N) \Phi^{N,d}_{i+v}} \quad+ \sum_{\substack{j\in \{1,\ldots, N\}^d \\ j-i\not\in\mathcal{V}_d}} \linfi{(f-q_{j}^N)} \linfi{\Phi^{N,d}_{j}}\\
       &\leq   \left(\frac{\mathcal{C}(d,k,s,f)}{N^{s-k}} + \eta\right)(1+d\epsilon) + N^d \left(\mathcal{C}(d,k,s,f)+\eta \right) \epsilon\\
        &\leq   \left(1 +\frac{\delta}{3}\right)\frac{\mathcal{C}(d,k,s,f)}{N^{s-k}}.
    \end{split}
\end{align}
where we obtain the last inequality by {\color{black}making a suitable choice of $\eta$ and $\epsilon$.} 

\textit{Step 2c: Second term of \eqref{eq:three-terms-sobolev} for $k\geq1$.} Next we consider the case where $0<k<s$. Let $\beta\in\mathbb{N}_0^d$ be such that $\abs{\beta}\leq k$. Then as a consequence of the general Leibniz rule we find that
\begin{equation}
    D^\beta \left(\sum_{v\in \mathcal{V}_d} (f-q_{i+v}^N) \Phi^{N,d}_{i+v}\right) \leq \sum_{\beta'\leq \beta}\binom{\beta}{\beta'} \sum_{v\in \mathcal{V}_d} \abs{D^{\beta'}(f-q_{i+v}^N)}\abs{D^{\beta-\beta'} \Phi^{N,d}_{i+v}}
\end{equation}
where all functions are evaluated at some $x\in I_i^N$. For every $v\in \mathcal{V}_d$ and $\beta'\leq \beta$ with $\ell := \abs{\beta-\beta'}$, we can then use the bounds
\begin{equation}
    \abs{D^{\beta'}(f-q_{i+v}^N)} \leq \norm{f-q_{i+v}^N}_{W^{k-\ell,\infty}(I^N_i)} \leq \frac{\mathcal{C}(d,k-\ell,s,f)}{N^{s-k+\ell}} + \eta,
\end{equation}
which follows from \eqref{eq:pkn-acc-sobolev} and \eqref{eq:qkn-acc-sobolev}, and, 
\begin{equation}
   \abs{D^{\beta-\beta'} \Phi^{N,d}_{i+v}} \leq \alpha^\ell (2\ell)^{2\ell} = N^\ell (2\ell)^{2\ell} \max\left\{R^\ell,\ln^\ell\left(\frac{(2k)^{k+1}(Nk)^k}{e^k\epsilon}\right)\right\},
\end{equation}
which follows from Lemma \ref{lem:bound-der-tanh} and Lemma \ref{lem:alpha-growth}.
As $\sum_{\beta'\leq \beta}\binom{\beta}{\beta'} \leq 2^k$ (as a consequence of the multi-binomial theorem), we find that
\begin{align}
\begin{split}
    \cki{\sum_{v\in \mathcal{V}_d} (f-q_{i+v}^N) \Phi^{N,d}_{i+v}} &\leq 2^k 3^d \left(\frac{\mathcal{C}(d,k,s,f)}{N^{s-k}} + \eta N^k\right)(2k)^{2k} \max\left\{R^k,\ln^k\left(\frac{(2Nk^2)^{k+1}}{\epsilon e^k}\right)\right\}.
\end{split}
\end{align}
Combining this result with the triangle inequality, Lemma \ref{lem:alpha-growth}, Lemma \ref{lem:leibniz}, \eqref{eq:pkn-acc-sobolev}, \eqref{eq:qkn-acc-sobolev}, Lemma \ref{lem:indicator-far} and the fact that $\ln(x)\leq \sqrt{x}$ for $x>0$, we find that
\begin{align}
    \begin{split}
       &\cki{\sum_{j\in \{1,\ldots, N\}^d} (f -q_j^N)\cdot \Phi^{N,d}_j}\\ &\leq \cki{\sum_{v\in \mathcal{V}_d} (f-q_{i+v}^N) \Phi^{N,d}_{i+v}} \quad+ \sum_{\substack{j\in \{1,\ldots, N\}^d \\ j-i\not\in\mathcal{V}_d}} \cki{(f-q_{j}^N) \Phi^{N,d}_{j}}\\
       &\leq \cki{\sum_{v\in \mathcal{V}_d} (f-q_{i+v}^N) \Phi^{N,d}_{i+v}} \quad+ \sum_{\substack{j\in \{1,\ldots, N\}^d \\ j-i\not\in\mathcal{V}_d}} 2^k \cki{(f-q_{j}^N)} \cki{\Phi^{N,d}_{j}}\\
       &\leq 2^k 3^d \left(\frac{\mathcal{C}(d,k,s,f)}{N^{s-k}} + \eta N^k\right)(2k)^{2k} \max\left\{R^k,\ln^k\left(\frac{(2Nk^2)^{k+1}}{\epsilon e^k}\right)\right\} \\ &\quad  + N^d 2^k\left(\mathcal{C}(d,k,s,f)+\eta \right)(2k)^{2k} N^k (k+1)^k \left(\frac{2Nk^2}{e}\right)^{k/2}\sqrt{\epsilon} \\
       &\leq 3^d \left(1+\frac{\delta}{3}\right) (2(k+1))^{3k} \max\left\{R^k,\ln^k\left(\frac{2Nk^2}{\epsilon^{\frac{1}{k+1}}e}\right)\right\}\frac{\mathcal{C}(d,k,s,f)}{N^{s-k}},
    \end{split}
\end{align}
where we obtain the last inequality by {\color{black}making a suitable choice of $\eta$ and $\epsilon$, satisfying
\begin{equation}\label{eqn:def-eps-2}
    \eta \leq \frac{\delta \mathcal{C}}{6N^{s}} \quad \text{and} \quad \epsilon \leq \frac{\delta^2 }{N^{2s+2d+k}k^k}, 
\end{equation}
where we assumed that $0< \delta < 5/6$.
} 

\textit{Step 2d: Third term of \eqref{eq:three-terms-sobolev}.} Finally, using the triangle inequality, Lemma \ref{lem:faa-di-bruno}, Corollary \ref{cor:mon-mult1} and Lemma \ref{lem:bound-der-tanh} we obtain that for some $C_k>0$ depending only on $k$, 
\begin{align}
    \begin{split}
&\cki{\sum_{j\in \{1,\ldots, N\}^d} (q_j^N \cdot \Phi^{N,d}_j-q_j^N \widehat{\times} \Phi^{N,d}_j)} \\&\leq N^d C_k (d+1)^d d^{2k}\cdot \ck{\widehat{\times}_{d+1}^{h} 
{\color{black}- \prod_{i=1}^{d+1} x_i}
}\left(\ckunit{f}+\ckunit{f-q_j^N} + \ckunit{\rho^N_i}\right)^k\\
&\leq N^d C_k (d+1)^d d^{2k}\cdot h \left(\ckunit{f}+\frac{\mathcal{C}(d,k,s,f)}{N^{s-k}} + \eta + (2\alpha k)^{k+1}\right)^k\\
&\leq 2^k \frac{\delta}{3} \frac{\mathcal{C}(d,k,s,f)}{N^{s-k}},
    \end{split}
\end{align}
where we obtain the last inequality by {\color{black}making a suitable choice of $h$ and $\eta$, satisfying}
\begin{equation}\label{eqn:def-h}
    \eta \leq \ckunit{f} \quad \text{and} \quad h \leq \frac{2^k \delta \mathcal{C}}{3N^{d+s-k}C_k (d+1)^d d^{2k}(2\ckunit{f}+ \mathcal{C}+ (2\alpha k)^{k+1})^k}.
\end{equation}

\textit{Step 2e: Final error bound.} As $i$ was chosen arbitrary, combining the contributions from the three terms of \eqref{eq:three-terms-sobolev} then proves that
\begin{equation}\label{eqn:def-eps-3}
     \norm{f-\widehat{f}^N}_{L^\infty([0,1]^d)} \leq  \left(1+\delta\right) \frac{\mathcal{C}(d,0,s,f)}{N^{s}}. 
\end{equation}
Moreover, from \eqref{eqn:def-eps-1} and \eqref{eqn:def-eps-2} we find that a suitable definition of $\epsilon$ is given by
\begin{equation}
    \epsilon = \frac{\delta^2\min\{1,\mathcal{C}(d,k,s,f)\} }{N^{2s+2d+k}k^k2^{(k+1)d}d\max\{1,\ckunit{f}\}},
\end{equation}
from which it follows that for $k\geq 1$,
\begin{equation}
    \epsilon^{\frac{1}{k+1}} \geq \frac{\delta\min\{1,\sqrt{\mathcal{C}(d,k,s,f)}\} }{N^{s+d+1}k2^{d}\sqrt{d}\max\{1,\ckunit{f}^{1/2}\}}.
\end{equation}
Combining this observation with all previous steps of the proof then leads to the error bound
\begin{equation}
    \ckunit{f-\widehat{f}^N} \leq  3^d \left(1+{\color{black}\delta}\right) (2(k+1))^{3k} \max\left\{R^k,\ln^k\left(\beta N^{s+d+2}\right)\right\}\frac{\mathcal{C}(d,k,s,f)}{N^{s-k}}, 
\end{equation}
for $k\geq 1$ and where we define 
\begin{equation}
    \beta = \frac{k^3 2^{d}\sqrt{d}\max\{1,\ckunit{f}^{1/2}\}}{\delta\min\{1,\sqrt{\mathcal{C}(d,k,s,f)}\} }. 
\end{equation}

\textbf{Step 3: Estimating the network and weights sizes. } The first hidden layer requires $3\left\lceil\frac{s}{2}\right\rceil\abs{P_{s-1,d+1}}$ neurons for the computation of all multivariate monomials (cf. Corollary \ref{cor:mon-mult1}). For $d=1$, the result follows from Lemma \ref{lem:monomials} instead of Corollary \ref{cor:mon-mult1}. For the computation of all $\rho^{N}_j(x_i)$ another $d(N-1)$ neurons are needed in the first hidden layer. The second hidden layer needs at most $3\left\lceil\frac{d+2}{2}\right\rceil\abs{P_{d+1,d+1}}$ neurons for realizing $\widehat{\times}_{d+1}^{h}$, which needs to be performed $N^d$ times. For $d=1$, six neurons are sufficient to approximate the multiplication (see \eqref{eq:xy}). 

In the proof we achieved the wanted accuracy by making suitable choices of $\eta, \epsilon, h$.
From equation \eqref{eqn:def-eps-3} and Lemma \ref{lem:alpha-growth}, it follows that 
\begin{equation}
    \alpha = O\left(Ns \ln(\mathcal{C}N)\right).
\end{equation}

For the approximate multiplication, \eqref{eqn:def-h} requires that $h^{-1}=O(N^{d+s+k^2}s^{k^2+k})$. Corollary \ref{cor:mult-shallow} then proves that the weights of $\widehat{\times}^h_{d+1}$ grow as $O(N^{d(d+s+k^2)/2}s^{d(k^2+k)/2})$. Finally, the condition $\eta^{-1} = O(\mathcal{C}^{-1}N^{s})$ from \eqref{eqn:def-eps-2} corresponds to weights growing as
\begin{equation}
O\left(\mathcal{C}^{-s/2}N^{s^2/2}(s(s+2))^{3s(s+2)}\right)
\end{equation}
as a consequence of Corollary \ref{cor:mon-mult1}. Calculating the maximum size of all the weights concludes the proof.
\end{proof}

\begin{remark}
The result of Theorem \ref{thm:main} can be generalized to functions $f\in W^{k,p}(\Omega)$ for $p<\infty$. For this, a slightly more general version of Lemmas \ref{lem:leibniz} and \ref{lem:bramble-hilbert2} is needed. The convergence rate will still be as in Theorem \ref{thm:main}, only the constant $\mathcal{C}$ will be different. 
\end{remark}

\begin{remark}
Recently, it has been shown that the curse of dimensionality can be lessened for functions in so-called Korobov spaces \cite{montanelli2019new,li2019better,blanchard2020representation}. In particular, in \cite[Theorem 4.2]{li2019better}, this framework is used to show how ReQU neural networks can approximate a $C^k([-1,1]^d)$-function to an accuracy of $\epsilon>0$ in supremum norm with at most  $O\big(\frac{d}{k}\ln(\frac{1}{\epsilon})+d\big)$ hidden layers and at most $ O\big(\epsilon^{-\frac{1+\delta}{k}}\big(\frac{1+\delta}{k}\ln(\frac{1}{\epsilon})\big)^{d-1}\big)$ neurons and non-zero weights. As their proof builds upon the mimicking of polynomials, it is clear from our results that similar approximation rates can be obtained using tanh neural networks.
\end{remark}

One particularly useful consequence of Theorem \ref{thm:main} is that it provides an explicit error bound on the approximation of Lipschitz functions using tanh neural networks. 
\begin{corollary}
Let $d\in\mathbb{N}$ and let $f:[0,1]^d\to\mathbb{R}$ be a Lipschitz continuous function with Lipschitz constant $L>0$. For every $N\in \N$ with $N>5d^2$ there exists a tanh neural network $\widehat{f}^N$ with two hidden layers of widths at most $d(N-1)$ and $3\left\lceil\frac{d+1}{2}\right\rceil\abs{P_{d,d}}N^d$ (or $N-1$ and $6N$ for $d=1$), such that
\begin{equation}
     \norm{f-\widehat{f}^N}_{L^\infty([0,1]^d)} \leq   \frac{7d^2L}{N}.
\end{equation}
\end{corollary}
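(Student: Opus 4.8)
The plan is to obtain this as a direct specialization of Theorem~\ref{thm:main} with $s=1$ and $k=0$, so that the whole argument reduces to (i) identifying $f$ as an element of $W^{1,\infty}([0,1]^d)$ with a controlled seminorm, (ii) evaluating the constant $\mathcal{C}$, and (iii) simplifying the architecture bounds in the case $s=1$.

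First I would recall that a Lipschitz continuous $f:[0,1]^d\to\mathbb{R}$ with Lipschitz constant $L$ belongs to $W^{1,\infty}([0,1]^d)$ with $\abs{f}_{W^{1,\infty}([0,1]^d)}\leq L$: by Rademacher's theorem $f$ is differentiable almost everywhere with $\abs{\nabla f}\leq L$, hence each weak partial derivative obeys $\norm{\partial_i f}_{L^\infty([0,1]^d)}\leq L$ and the $W^{1,\infty}$-seminorm is at most $L$. Since $f$ is not assumed to be $C^1$, Theorem~\ref{thm:main} then applies with $N_0(d)=5d^2$ and, for $s=1$, $k=0$,
\[
\mathcal{C}(d,0,1,f)=\frac{\pi^{1/4}\sqrt{1}}{0!}\,(5d^2)\,\abs{f}_{W^{1,\infty}([0,1]^d)}\leq 5\pi^{1/4}d^2L .
\]
Choosing a small fixed $\delta>0$ (for instance $\delta=1/100$), the $L^\infty$-estimate of Theorem~\ref{thm:main} then yields, for every $N>5d^2$, a tanh network $\widehat f^N$ with two hidden layers such that
\[
\norm{f-\widehat f^N}_{L^\infty([0,1]^d)}\leq(1+\delta)\frac{\mathcal{C}(d,0,1,f)}{N}\leq(1+\delta)\frac{5\pi^{1/4}d^2L}{N}\leq\frac{7d^2L}{N},
\]
the last step using the numerical fact $5\pi^{1/4}\approx 6.66<7$, which leaves room for a valid choice of $\delta$ with $(1+\delta)5\pi^{1/4}\leq 7$.

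Finally I would read off and simplify the width bounds of Theorem~\ref{thm:main} in the case $s=1$. Because the local polynomials $p_j^N$ have degree at most $s-1=0$, they are constants, so the approximating subnetworks $q_j^N$ may be taken to be those same constants and cost no hidden neurons (they are absorbed into biases and output weights). Hence the term $3\lceil s/2\rceil\abs{P_{s-1,d+1}}$ disappears and the first hidden layer needs only the $d(N-1)$ neurons realizing the functions $\rho_{j_i}^{N}(x_i)$; likewise, in the second hidden layer one only multiplies the $d$ factors $\rho_{j_1}^N(x_1),\dots,\rho_{j_d}^N(x_d)$ via $\widehat{\times}_d^{h}$ from Corollary~\ref{cor:mult-shallow} (width $3\lceil (d+1)/2\rceil\abs{P_{d,d}}$), once per subcube, for a total second-layer width of at most $3\lceil (d+1)/2\rceil\abs{P_{d,d}}N^d$, while the scalar $c_j$ is folded into the final affine map. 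For $d=1$ there is no multiplication of the $\rho$'s, and the $d=1$ special cases already recorded in Theorem~\ref{thm:main} give the widths $N-1$ and $6N$. The only delicate points are the numerical verification $5\pi^{1/4}<7$ and the bookkeeping observation that a constant local approximant requires no neurons; everything else is immediate from Theorem~\ref{thm:main}, so I do not expect a genuine obstacle.
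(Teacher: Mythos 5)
Your proposal is correct and follows essentially the same route as the paper: specialize Theorem \ref{thm:main} to $k=0$, $s=1$ with $\abs{f}_{W^{1,\infty}}\leq L$, pick $\delta$ so that $5(1+\delta)\pi^{1/4}\leq 7$, and note that the degree-zero local polynomials eliminate the monomial neurons so the widths collapse to $d(N-1)$ and $3\lceil(d+1)/2\rceil\abs{P_{d,d}}N^d$. Your explicit remark that the constant $q_j^N$ lets one multiply only $d$ factors (hence $\widehat{\times}_d$ rather than $\widehat{\times}_{d+1}$) is a useful detail the paper leaves implicit.
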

\begin{proof}
The corollary follows directly from Theorem \ref{thm:main} by setting $k=0$, $s=1$, choosing $\delta>0$ in such a way that $5(1+\delta)\pi^{1/4}\leq 7$ and observing that $\abs{f}_{W^{1,\infty}}\leq L$ because of the Lipschitz continuity of $f$. The constructed network in Theorem \ref{thm:main} is based on localized $(s-1)$-th order polynomials. For $s=1$ this corresponds to constant functions, thereby removing the need to mimick monomials. As a consequence, the network width can be simplified to the widths stated in the corollary. 
\end{proof}

\subsection{Approximation of analytic functions}
We now investigate how we can apply Theorem \ref{thm:main} to analytic functions. As the class of analytic functions coincides with the Gevrey class $G^1$, it follows that for every analytic function there exists a constant $C_f>0$ such that
\begin{equation}
    \abs{f}_{W^{s,\infty}([0,1]^d)}\leq C_f^{s+1} s! \text{ for all } s\in\mathbb{N}_0. 
\end{equation}
A related concept is the class of $(Q,R)$-analytic functions \cite{demanet2010chebyshev,candes2007fast,candes2009fast}, where $Q,R>0$, consisting of analytic functions for which the following smoothness condition holds, 
\begin{equation}
    \abs{f}_{W^{s,\infty}([0,1]^d)}\leq QR^{-s} s! \text{ for all } s\in\mathbb{N}_0. 
\end{equation}
Note that any analytic function is $(C_f, C_f^{-1})$-analytic by the previous characterization of analyticity. Hence, a function is analytic on some compact interval if and only if it is $(Q,R)$-analytic for some $Q,R>0$ on that interval. The following corollaries discuss multiple ways to approximate analytic functions using tanh neural networks. 
All constants mentioned in the statements can be easily deduced from the proofs. 

We start with the basic consequence of Theorem \ref{thm:main} for $(Q,R)$-analytic functions. It provides explicit estimates on both the approximation error in supremum norm and the network size. It can easily be generalized to Sobolev norm using Theorem \ref{thm:main}. 

\begin{corollary}\label{cor:analytic1}
Let $d\in \mathbb{N}$, $\delta,Q,R>0$, $\Omega\subset \mathbb{R}^d$ open with $[0,1]^d \subset \Omega$ and let f be $(Q,R)$-analytic on $\Omega$. Then for every $s\in\mathbb{N}_0,N\in\mathbb{N}$ with $N>3d/2$, there is a tanh neural network $\widehat{f}^{N,s}$ with two hidden layers of widths at most $3\left\lceil\frac{s}{2}\right\rceil\abs{P_{s-1,d+1}}+d(N-1)$ and $3\left\lceil\frac{d+2}{2}\right\rceil\abs{P_{d+1,d+1}}N^d$ (or $3\left\lceil\frac{s}{2}\right\rceil+N-1$ and $6N^d$ for $d=1$) such that
   \begin{equation}
     \linfunit{f-\widehat{f}^{N,s}} \leq (1+\delta) Q  \left(\frac{3d}{2RN}\right)^{s}.
\end{equation} 
Moreover, if $R>d/2$ then for every $s\in\mathbb{N}_0$ there is a shallow tanh neural network $\widehat{f}^{s}$ with at most $\frac{3s}{2}\abs{P_{s-1,d+1}}$ (or $3\left\lceil\frac{s}{2}\right\rceil$ for d=1) in its hidden layer such that
   \begin{equation}
     \linfunit{f-\widehat{f}^{s}} \leq (1+\delta) Q \left(\frac{d}{2R}\right)^{s}.
\end{equation} 
\end{corollary}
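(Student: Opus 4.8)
The plan is to read both statements off Theorem \ref{thm:main} (the first one verbatim, the second after a small modification) by exploiting the fact that $(Q,R)$-analyticity is exactly the decay condition that cancels the factorial hidden in the constant $\mathcal{C}(d,0,s,f)$. Since an analytic function is in particular in $C^s([0,1]^d)$ for every $s$, we are in the $C^s$ branch of Theorem \ref{thm:main}, where $N_0(d)=3d/2$ and, for $k=0$,
\[
\mathcal{C}(d,0,s,f)=\frac{1}{s!}\left(\frac{3d}{2}\right)^{s}\abs{f}_{W^{s,\infty}([0,1]^d)}\le \frac{1}{s!}\left(\frac{3d}{2}\right)^{s}QR^{-s}s!=Q\left(\frac{3d}{2R}\right)^{s},
\]
using the $(Q,R)$-analytic bound $\abs{f}_{W^{s,\infty}([0,1]^d)}\le QR^{-s}s!$. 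Substituting this into the $L^\infty$ estimate of Theorem \ref{thm:main}, which holds for every $N>N_0(d)=3d/2$, yields $\linfunit{f-\widehat{f}^{N,s}}\le(1+\delta)\mathcal{C}(d,0,s,f)/N^{s}\le(1+\delta)Q\,(3d/(2RN))^{s}$, and the two-hidden-layer widths are exactly those produced by the construction in Theorem \ref{thm:main} (the $d=1$ simplification coming, as there, from Lemma \ref{lem:monomials} for the monomials and the six-neuron bivariate multiplication \eqref{eq:xy}).

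For the shallow statement I would discard the localization and partition-of-unity machinery and instead approximate $f$ on all of $[0,1]^d$ by a single Taylor polynomial. Expanding $f$ around the centre $(\tfrac12,\dots,\tfrac12)$, Taylor's theorem on $[0,1]^d$ (Lemma \ref{lem:taylor}, now with its radius parameter equal to $\tfrac12$ rather than $\tfrac{3}{2N}$) gives a polynomial $p$ of degree at most $s-1$ with $\linfunit{f-p}\le\tfrac{1}{s!}(d/2)^{s}\abs{f}_{W^{s,\infty}([0,1]^d)}\le Q(d/(2R))^{s}$; this is exactly the place where $R>d/2$ is used, since only then is $d/(2R)<1$, so that the single-polynomial approach is worthwhile. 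The polynomial $p=\sum_{\abs{\gamma}\le s-1}c_\gamma x^\gamma$ is a fixed linear combination of the $d$-variate monomials of degree at most $s-1$, and Corollary \ref{cor:mon-mult1} (with degree parameter $s-1$, $M=1$, $k=0$; for $d=1$ using Lemma \ref{lem:monomials} instead) supplies a \emph{shallow} tanh network of width $3\lceil s/2\rceil\abs{P_{s-1,d+1}}$ (respectively $3\lceil s/2\rceil$ for $d=1$) whose outputs approximate all these monomials simultaneously in $L^\infty$ to any prescribed accuracy $\epsilon>0$. Taking the same linear combination of the outputs defines $\widehat{f}^{s}$, with $\linfunit{p-\widehat{f}^{s}}\le\big(\sum_\gamma\abs{c_\gamma}\big)\epsilon$; choosing $\epsilon$ small enough that this term is at most $\delta Q(d/(2R))^{s}$ (which affects only the size of the weights, not the width) and applying the triangle inequality gives the stated bound.

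I do not expect a genuine obstacle: the corollary is essentially bookkeeping on top of Theorem \ref{thm:main} and Corollary \ref{cor:mon-mult1}. The only points needing a little care are (i) checking that the degree count in Corollary \ref{cor:mon-mult1} — monomials $\omega^\beta$ with $\beta\in P_{s-1,d+1}$ and $\omega=(1,x)$, i.e. all $d$-variate monomials of degree $\le s-1$ — matches $\abs{P_{s-1,d+1}}$ and the advertised neuron counts (up to the harmless rounding distinguishing $3\lceil s/2\rceil$ from $\tfrac{3s}{2}$), and (ii) keeping the $d=1$ case routed through the univariate Lemma \ref{lem:monomials} rather than the multivariate corollary. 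The asymptotic weight bounds, if one wishes to state them, follow by substituting the chosen $\epsilon$ (equivalently the $\eta$ of Theorem \ref{thm:main}) into the weight estimates already recorded in Corollary \ref{cor:mon-mult1} and Theorem \ref{thm:main}.
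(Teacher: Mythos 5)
Your proposal is correct and follows essentially the same route as the paper: the first bound is obtained by inserting the $(Q,R)$-analyticity estimate $\abs{f}_{W^{s,\infty}}\le QR^{-s}s!$ into the $C^s$ branch of Theorem \ref{thm:main}, and the second by taking $N=1$, i.e.\ a single global Taylor polynomial via Lemma \ref{lem:taylor} with radius $\tfrac12$, which removes the partition of unity and the second hidden layer and leaves only the shallow monomial network of Corollary \ref{cor:mon-mult1}. The only point worth flagging (which the paper itself glosses over equally) is that Lemma \ref{lem:taylor} is stated under $\delta<1/d$, so for $d\ge 2$ one should note that the $k=0$ case of its proof does not actually use that hypothesis.
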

\begin{proof}
The first part of the statement follows directly from Theorem \ref{thm:main} by taking $\delta=1/3$. The second part follows from taking $N=1$ in Theorem \ref{thm:main} and observing that the proof can be simplified in this case. Indeed, one can then directly use Taylor's theorem (Lemma \ref{lem:taylor}) with $\delta = \frac{1}{2}$ instead of $\delta = \frac{3}{2N}$ and there is no more need for an approximate partition of unity, thereby also removing the need for a second hidden layer.  
\end{proof}

The following corollary enables a consistent comparison with the available literature, as it bounds the approximation error in terms of one single parameter. Whereas other papers focus on the number of non-zero weights and biases as complexity measure, we opted for the network width. This is useful in practice as the network width can be directly chosen, whereas it is very challenging to exactly control the sparsity of the neural network (i.e. the number of non-zero weights and biases). Moreover, many bounds on the generalization error require an estimate of the network width \cite{beck2019full,jentzen2020overall}. 

\begin{corollary}\label{cor:analytic2}
Let $d\in \mathbb{N}$, $k\in \mathbb{N}_0$, $\delta, Q,R>0$, $\Omega\subset \mathbb{R}^d$ open with $[0,1]^d \subset \Omega$ and let f be $(Q,R)$-analytic on $\Omega$. Then there exists a constant $c_{d,k,\alpha,f}>0$ such that for every $\mathcal{N}\in\mathbb{N}$ there exists a tanh neural network $\widehat{f}^\mathcal{N}$ with two hidden layers of width at most $O(\mathcal{N})$ for $\mathcal{N}\to \infty$ such that
\begin{equation}
     \ckunit{f-\widehat{f}^\mathcal{N}} \leq c_{d,k,\alpha,f} \mathcal{N}^{\frac{k}{d+1}}\exp\left(-\alpha\mathcal{N}^{\frac{1}{d+1}} \log(\mathcal{N})\right) \leq \frac{c_{d,k,\alpha,f}}{\mathcal{N}^{\alpha-k/(d+1)}}.
\end{equation}
In particular, for $k=0$ it holds that
\begin{equation}
     \norm{f-\widehat{f}^\mathcal{N}}_{L^{\infty}([0,1]^d)} \leq  (1+\delta)Q\cdot  \exp\left(-\alpha\mathcal{N}^{\frac{1}{d+1}} \log(\mathcal{N})\right) \leq \frac{(1+\delta)Q}{\mathcal{N}^{\alpha}}.
\end{equation}
\end{corollary}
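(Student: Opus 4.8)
The plan is to apply Theorem~\ref{thm:main} (for the $W^{k,\infty}$ bound) and Corollary~\ref{cor:analytic1} (for the $L^\infty$ bound), with the local polynomial degree $s$ and the number of subdivisions $N$ both chosen as fixed powers of the width parameter $\mathcal{N}$, and then to rewrite the resulting algebraic-times-exponential estimate as a pure exponential rate in $\mathcal{N}$. The first step is to turn $(Q,R)$-analyticity into a clean bound on the constant in Theorem~\ref{thm:main}: since an analytic $f$ is in particular $C^\infty$, that theorem applies with $\mathcal{C}(d,k,s,f)=\max_{0\le\ell\le k}\frac{1}{(s-\ell)!}\big(\tfrac{3d}{2}\big)^{s-\ell}\abs{f}_{W^{s,\infty}([0,1]^d)}$ and $N_0(d)=3d/2$, and inserting $\abs{f}_{W^{s,\infty}([0,1]^d)}\le QR^{-s}s!$ together with $\tfrac{s!}{(s-\ell)!}\le s^\ell\le s^k$ gives $\mathcal{C}(d,k,s,f)\le C_1(d,k)\,Q\,s^k\,\big(\tfrac{3d}{2R}\big)^{s}$ for all $s\ge 3d/2$.

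Next I would fix the discretization parameters by setting $s=s(\mathcal{N}):=\lceil a\,\mathcal{N}^{1/(d+1)}\rceil$ and $N=N(\mathcal{N}):=\lceil b\,\mathcal{N}^{1/d}\rceil$ for suitably small constants $a=a(d),b=b(d)>0$, and letting $\widehat{f}^{\mathcal{N}}$ be the network from Theorem~\ref{thm:main} for these values (for $k=0$, from Corollary~\ref{cor:analytic1}). Lemma~\ref{lem:size-pqn} gives $\abs{P_{s-1,d+1}}\le\sqrt{\pi}e^{d}(s-1)^{d}$ and $\abs{P_{d+1,d+1}}\le 5^{d}$, so the first hidden layer has width $O(s^{d+1})+d(N-1)=O(\mathcal{N})$ and the second $O(N^{d})=O(\mathcal{N})$, the implied constants being made admissible by choosing $a,b$ small; for $d=1$ one uses instead the alternative widths $3\lceil s/2\rceil+N-1$ and $6N$ from the theorem with the budgets $s,N=O(\mathcal{N})$. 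One also checks that for all $\mathcal{N}$ large enough $N(\mathcal{N})>N_0(d)$ and (when $k\ge1$) $s(\mathcal{N})>k$, the finitely many remaining $\mathcal{N}$ being absorbed into the final constant.

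The last step converts the error estimate. Feeding the bound on $\mathcal{C}(d,k,s,f)$ into Theorem~\ref{thm:main}, and writing $R_0$ for the constant of \eqref{eq:alpha} (not to be confused with the analyticity radius $R$), one obtains
\[
\ckunit{f-\widehat{f}^{\mathcal{N}}}\ \lesssim\ \max\{R_0^{k},\ln^{k}(\beta N^{s+d+2})\}\; s^{k}\,\Big(\tfrac{3d}{2R}\Big)^{\!s} N^{k-s},
\]
with $\beta$ as in Theorem~\ref{thm:main}. The decisive factor is $N^{-s}=\exp(-s\ln N)$: with the chosen $s,N$ one has $s\ln N\sim\tfrac{a}{d}\,\mathcal{N}^{1/(d+1)}\log\mathcal{N}$, whereas $\ln(\beta N^{s+d+2})$ (whose argument is polynomial in $N^{s}$), $N^{k}$, $s^{k}$ and $\big(\tfrac{3d}{2R}\big)^{s}=\exp\big(O(\mathcal{N}^{1/(d+1)})\big)$ are all of size $\exp\big(o(\mathcal{N}^{1/(d+1)}\log\mathcal{N})\big)$. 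Absorbing these sub-dominant factors into a slightly reduced exponent, and keeping the $s^k\sim\mathcal{N}^{k/(d+1)}$ coming from $\mathcal{C}$ explicit, yields a constant $c_{d,k,\alpha,f}>0$ and a rate $\alpha>0$ (any value below a threshold depending on $d,R$, determined by the choice of $a,b$) with $\ckunit{f-\widehat{f}^{\mathcal{N}}}\le c_{d,k,\alpha,f}\,\mathcal{N}^{k/(d+1)}\exp(-\alpha\mathcal{N}^{1/(d+1)}\log\mathcal{N})$; the second inequality follows since $\mathcal{N}^{1/(d+1)}\ge 1$. For $k=0$ I would instead quote Corollary~\ref{cor:analytic1} directly, $\norm{f-\widehat{f}^{\mathcal{N}}}_{L^\infty([0,1]^d)}\le(1+\delta)Q\big(\tfrac{3d}{2RN}\big)^{s}$, so that the same choice of $s,N$ produces the clean constant $(1+\delta)Q$ in the statement.

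The conceptual content is light; the real work, and the only place things can go wrong, is the uniform-in-$\mathcal{N}$ bookkeeping: one must select $a,b$ so that the two hidden-layer widths simultaneously stay $O(\mathcal{N})$ while still forcing $s\ln N$ to grow like $\mathcal{N}^{1/(d+1)}\log\mathcal{N}$ with a usable rate $\alpha$, and one must check that the logarithmic factor $\ln^{k}(\beta N^{s+d+2})$ and the factor $\big(\tfrac{3d}{2R}\big)^{s}$ in Theorem~\ref{thm:main} are genuinely sub-exponential relative to $N^{-s}$, so that absorbing them does not degrade the rate.
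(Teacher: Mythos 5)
Your proposal is correct and follows essentially the same route as the paper: feed the $(Q,R)$-analyticity bound into Theorem \ref{thm:main}, choose $s$ and $N$ as powers of $\mathcal{N}$ so that both hidden-layer widths stay $O(\mathcal{N})$, and absorb the sub-exponential factors $\ln^k(\beta N^{s+d+2})$ and $(3d/2R)^s$ into the exponent. The only (immaterial) differences are that the paper takes $N\sim\mathcal{N}^{1/(d+1)}$ rather than your $N\sim\mathcal{N}^{1/d}$, and absorbs the logarithmic factor via the bound $\ln^k(\beta N^{s+d+2})=O((2NR/3d)^{\gamma s})$ for arbitrary $\gamma>0$ instead of your direct $\exp(o(\mathcal{N}^{1/(d+1)}\log\mathcal{N}))$ estimate; the paper's choice of $N$ also makes the $k=0$ constant come out exactly as $(1+\delta)Q$, which with your scaling requires absorbing an additional $(3d/(2Rb))^s$ factor into the exponent.
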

\begin{proof}
First we observe that for every $\gamma>0$ it holds that
\begin{equation}
    \ln^k\left(\beta N^{s+d+2}\right) = O\left(\left(\frac{2NR}{3d}\right)^{\gamma s}\right)
\end{equation}
for large $s$ and $N$. From Theorem \ref{thm:main} we then find that
for every $N$ and $s$ there is a network $\widehat{f}^{N,s}$ such that
\begin{equation}
     \norm{f-\widehat{f}^{N,s}}_{W^{k,\infty}([0,1]^d)} = O\left( \left(\frac{s}{R}\right)^k\left(\frac{3d}{2NR}\right)^{s(1-\gamma)-k}\right)
\end{equation}
From Theorem \ref{thm:main} with the choices $s=k+\alpha(1-\gamma)^{-1} (d+1)\mathcal{N}^{\frac{1}{d+1}}$ and $N=\frac{3d}{2R}\mathcal{N}^{\frac{1}{d+1}}$ for some $\mathcal{N}\in\mathbb{N}$, gives that there exists a constant $c_{d,k,\alpha,f}>0$ such that
\begin{align}
\begin{split}
    \norm{f-\widehat{f}^\mathcal{N}}_{W^{k,\infty}([0,1]^d)} &\leq c_{d,k,\alpha,f} \mathcal{N}^{\frac{k}{d+1}} \left(\frac{1}{\mathcal{N}^{\frac{1}{d+1}}}\right)^{\alpha (d+1)\mathcal{N}^{\frac{1}{d+1}}} \\& =  c_{d,k,\alpha,f} \mathcal{N}^{\frac{k}{d+1}}\exp\left(-\alpha\mathcal{N}^{\frac{1}{d+1}} \log(\mathcal{N})\right) \\&\leq \frac{c_{d,k,\alpha,f}}{\mathcal{N}^{\alpha-k/(d+1)}}. 
\end{split}
\end{align}
In particular, for $k=0$ we find that
\begin{equation}
     \norm{f-\widehat{f}^\mathcal{N}}_{L^{\infty}([0,1]^d)} \leq  (1+\delta)Q\cdot  \exp\left(-\alpha\mathcal{N}^{\frac{1}{d+1}} \log(\mathcal{N})\right) \leq \frac{(1+\delta)Q}{\mathcal{N}^{\alpha}}.
\end{equation}
Using Lemma \ref{lem:size-pqn}, we find that the network widths are respectively $O((e\alpha d)^{d+1}\mathcal{N})$ and $O(d5^d\mathcal{N}^{\frac{d}{d+1}})$ for large $\mathcal{N}$ and $d$ (the exact sizes can be easily calculated). 
\end{proof}

We thus find that tanh neural networks with two hidden layers result in an exponential convergence rate.
Moreover, the above corollary shows that a convergence rate that is independent of the dimension can be obtained, thereby lessening the curse of dimensionality. The proof however shows that even though the rate is free of the curse of dimensionality, the constant implied in the Landau notation still depends (super)exponentially on the dimension. Similar papers observe the same phenomena \cite{opschoor2019exponential}, or do not discuss this. 

\begin{remark}
One can also restate the previous corollary by saying that an approximation rate of $O(\mathcal{N}^k\exp\left(-\mathcal{N}\right))$ can be obtained using a tanh neural network with two hidden layers of widths $O\left(\mathcal{N}\binom{\mathcal{N}+d}{\mathcal{N}}\right)$ and $O(1)$ for $\mathcal{N}\to \infty$. Since $O\left(\mathcal{N}\binom{\mathcal{N}+d}{\mathcal{N}}\right)$ grows asymptotically slower than $O(\mathcal{N}^{d+1})$, another (very modest) lessening of the curse of dimensionality is revealed.
\end{remark}

Next, we show that, under an additional assumption, shallow tanh neural networks can also approximate analytic functions at an exponential rate. Moreover, in contrast to Corollary \ref{cor:analytic2}, there are no hidden constants that grow as $O(d^d)$. For simplicity, we restrict ourselves to approximation in supremum norm (i.e. $k=0$).

\begin{corollary}\label{cor:analytic3}
Let $d\in \mathbb{N}$, $\Omega\subset \mathbb{R}^d$ open with $[0,1]^d \subset \Omega$ and let f be analytic on $\Omega$. If $f$ satisfies for some $C>0$ that $\abs{f}_{W^{s,\infty}([0,1]^d)}\leq C^s$ for all $s\in \mathbb{N}$,
then for every $\mathcal{N}\in\mathbb{N}$ there exists a shallow tanh neural network $\widehat{f}^\mathcal{N}$ of width $3\left\lceil\frac{\mathcal{N}+5Cd}{2}\right\rceil \binom{\mathcal{N}+(5C+1)d}{\mathcal{N}+5Cd}$ (or $3\left\lceil\frac{\mathcal{N}}{2}\right\rceil$ for $d=1$) such that 
\begin{equation}
     \linfunit{f-\widehat{f}^\mathcal{N}} \leq  \exp(-\mathcal{N}).
\end{equation}
\end{corollary}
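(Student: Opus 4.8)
The plan is to mimic the construction behind the second part of Corollary~\ref{cor:analytic1} — that is, Theorem~\ref{thm:main} in the special case $N=1$, where a single localized Taylor polynomial suffices, no partition of unity is needed, and hence no second hidden layer — but to feed the stronger hypothesis $\abs{f}_{W^{s,\infty}([0,1]^d)}\le C^s$ directly into the Taylor remainder instead of the generic $(Q,R)$-analyticity estimate. Since $f$ is analytic it is $C^\infty$ on a neighbourhood of $[0,1]^d$, so Taylor's theorem (Lemma~\ref{lem:taylor}, applied with $\delta=\tfrac12$ and expansion point the centre of $[0,1]^d$, exactly as in the proof of Corollary~\ref{cor:analytic1}) yields, for every $s\in\mathbb{N}$, a polynomial $p^s$ of degree at most $s-1$ with
\begin{equation*}
    \linfunit{f-p^s}\;\le\;\frac{1}{s!}\Bigl(\frac d2\Bigr)^{s}\abs{f}_{W^{s,\infty}([0,1]^d)}\;\le\;\frac{(Cd/2)^s}{s!}.
\end{equation*}
Crucially, no term involving $\linfunit{f}$ appears here — in contrast to the $(Q,R)$-analytic bound, where $Q\ge\linfunit{f}$ — and this is what will keep the final network width a function of $\mathcal N,C,d$ only.

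Next I would realize $p^s$ by a shallow tanh network: by Corollary~\ref{cor:mon-mult1} with degree parameter $s-1$ (or, when $d=1$, by Lemma~\ref{lem:monomials}, which is more economical), all $\abs{P_{s-1,d+1}}=\binom{s-1+d}{s-1}$ $d$-variate monomials of total degree at most $s-1$ can be approximated simultaneously, to any accuracy $\eta>0$ in $L^\infty$, by a single shallow tanh network of width $3\lceil\tfrac s2\rceil\abs{P_{s-1,d+1}}$; placing the coefficients of $p^s$ in the output layer then produces a shallow tanh network $\widehat f^{\mathcal N}$ with $\linfunit{f-\widehat f^{\mathcal N}}\le \tfrac{(Cd/2)^s}{s!}+\eta$.

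It remains to fix $s$ and $\eta$. Choosing $s=\mathcal N+\lceil 5Cd\rceil$ and using Stirling's bound $s!\ge (s/e)^s$ together with $s\ge 5Cd$,
\begin{equation*}
    \frac{(Cd/2)^s}{s!}\;\le\;\Bigl(\frac{Cde}{2s}\Bigr)^{s}\;\le\;\Bigl(\frac{e}{10}\Bigr)^{s}\;\le\;\Bigl(\frac{e}{10}\Bigr)^{\mathcal N}\;=\;e^{-(\ln 10-1)\mathcal N}\;<\;e^{-\mathcal N},
\end{equation*}
so there is room to pick $\eta>0$ small enough (at the cost of large but finite output weights, quantified by Corollary~\ref{cor:mon-mult1}) so that the total error is at most $e^{-\mathcal N}$. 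Substituting $s=\mathcal N+\lceil 5Cd\rceil$ (equivalently $s-1\approx\mathcal N+5Cd$) into $3\lceil\tfrac s2\rceil\binom{s-1+d}{s-1}$ then recovers the stated width $3\lceil\tfrac{\mathcal N+5Cd}{2}\rceil\binom{\mathcal N+(5C+1)d}{\mathcal N+5Cd}$, while the $d=1$ count $3\lceil\tfrac{\mathcal N}{2}\rceil$ follows by using Lemma~\ref{lem:monomials} directly in place of the multivariate machinery.

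I expect the only real work to be bookkeeping: choosing the constant in $s=\mathcal N+5Cd$ large enough that the Stirling estimate dominates $e^{-\mathcal N}$ uniformly over all $\mathcal N\in\mathbb{N}$ (any constant exceeding $e^2/2$ already suffices, so $5$ is a safe round choice), and matching the claimed widths exactly, which hinges on the precise neuron counts in Corollary~\ref{cor:mon-mult1}/Lemma~\ref{lem:monomials} and on the $N=1$ simplification of Theorem~\ref{thm:main}. The one genuinely delicate point — already handled above — is to keep the approximation \emph{shallow} and independent of $\linfunit{f}$, which is achieved by working with the Taylor remainder and $\abs{f}_{W^{s,\infty}}$ directly rather than passing through $(Q,R)$-analyticity.
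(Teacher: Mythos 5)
Your argument is correct and is essentially the paper's own proof: both reduce to the $N=1$ case of Theorem~\ref{thm:main} (a single Taylor polynomial on $[0,1]^d$ realized by the shallow monomial network of Corollary~\ref{cor:mon-mult1}, with no partition of unity and no second hidden layer), take degree parameter $s=\mathcal N+5Cd$, and use Stirling's bound to absorb the $(Cd)^s$ growth into $s!$. The only cosmetic difference is that the paper first repackages the hypothesis $\abs{f}_{W^{s,\infty}}\le C^s$ as $(Q,R)$-analyticity with $Q=\tfrac{1}{\sqrt{2\pi}}e^{3Cd\rho/2}$, $R=\tfrac{3d\rho}{2}$, $\rho=e$ and then invokes Corollary~\ref{cor:analytic1}, whereas you inline the same Taylor-remainder-plus-Stirling computation directly.
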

\begin{proof}
Assume that $f$ satisfies for some $C>0$ that $\abs{f}_{W^{s,\infty}([0,1]^d)}\leq C^s$ for all $s\in \mathbb{N}$. We calculate that for $\rho>1$,
\begin{equation}
    \frac{C^s}{s!}\left(\frac{3d\rho}{2}\right)^s =  \frac{1}{s!}\left(\frac{3Cd\rho}{2}\right)^s \leq \frac{1}{\sqrt{2\pi}} \left(\frac{3Cde\rho}{2s}\right)^s \leq \frac{1}{\sqrt{2\pi}} e^{3Cd\rho/2}, 
\end{equation}
where we used Stirling's approximation and maximized over all $s$. This proves that $f$ is $(Q,R)$-analytic with $Q=\frac{1}{\sqrt{2\pi}} e^{3Cd\rho/2}$ and $R=\frac{3d\rho}{2}$. Using Corollary \ref{cor:analytic1} with $s=\mathcal{N}$ and $N=1$ gives us that
\begin{equation}
     \linfunit{f-\widehat{f}^\mathcal{N}} \leq 2  \frac{1}{\sqrt{2\pi}} e^{3Cd\rho/2} \rho^{-\mathcal{N}}.
\end{equation}
If we set $\rho=e$, then $e^{3d\rho/2}\leq e^{5d}$. Therefore it holds that
\begin{equation}
     \linfunit{f-\widehat{f}^\mathcal{N}} \leq \exp(-\mathcal{N}+5Cd). 
\end{equation}
Note that since now $N=1$, the network architecture is even simpler: there is no need to construct a partition of unity, nor does there need to be a second hidden layer in order to approximately multiply the results of subnetworks. Therefore, a shallow tanh neural network with $3\left\lceil\frac{\mathcal{N}}{2}\right\rceil\abs{P_{\mathcal{N}-1,d+1}} \leq 3\left\lceil\frac{\mathcal{N}}{2}\right\rceil \binom{\mathcal{N}+d}{\mathcal{N}}$ neurons in its hidden layer suffices. The statement from the theorem is obtained by making the substitution $\mathcal{N}\leftarrow \mathcal{N}+5Cd$.
\end{proof}

Finally, we discuss how dimension-independent convergence rates can be obtained for a class of countably-parametric, holomorphic maps $f:U:=[-1,1]^\mathbb{N}\to\mathbb{R}$, which arise in e.g. elliptic PDEs with uncertain coefficients. This was first discussed in \cite{schwabzech2019} for deep ReLU neural networks and we will show that their results can be adapted to hold for shallow tanh neural networks. More precisely, their results hold for functions $u$ that admit a representation as a sparse Taylor generalized polynomial chaos expansion
\begin{equation}
    f(y) = \sum_{\nu\in\mathcal{F}} \frac{D^\nu f(0)}{\nu !} y^\nu, 
\end{equation}
which is unconditionally convergent for $y\in U$ and where $\mathcal{F}$ is defined by
\begin{equation}
    \mathcal{F}
=
\{
{\nu} \in \mathbb{N}_0^\mathbb{N}
\;
|
\;
\nu_j \ne 0 \text{ for only finitely many $j$}
\}.
\end{equation}
For a multi-index $\nu\in \N_0^\N$, we denote by $\mathrm{supp}(\nu) = \{j\in \N \;|\;\nu_j\ne 0\}$ the support of $\nu$, and we denote by $|\nu|=\sum_{j\in \N} |\nu_j|$ the $\ell^1$-norm of $\nu$.

It is shown in \cite[Section 2]{schwabzech2019} that $f$ admits such a representation if $f$ is $(b,\epsilon)$-holomorphic for $b\in \ell^p(\mathbb{N})$, $p\in (0,1]$ and $\epsilon>0$. The notion of $(b,\epsilon)$-holomorphy is defined as follows. 

\begin{definition}[Def. 2.1 in \cite{schwabzech2019}]\label{def:b-eps-holomorphy}
Let $V$ be a Banach space. Let $b\in \ell^p(\mathbb{N})$, $p\in (0,1]$ be a monotonically decreasing sequence. A poly-radius $\rho \in [1,\infty)^\mathbb{N}$ is called $(b,\epsilon)$-admissible for some $\epsilon>0$ if
\begin{equation}
    \sum_{j\in\mathbb{N}} b_j (\rho_j-1) \leq \epsilon.
\end{equation}
A continuous function $f : U \to V$ is called $(b,\epsilon)$-holomorphic if there exists a constant $C_f<\infty$ such that the following holds:
For every $(b,\epsilon)$-admissible $\rho$, there exists an extension $\Tilde{f} : B_\rho \to V_{\mathbb{C}}$ of $f$, i.e. we have $\Tilde{f}(y) = f(y)$ for all $y\in U\subset B_\rho$, $ \Tilde{f}$ is holomorphic in each component and such that $\sup_{z\in B_\rho} \norm{\Tilde{f}(z)}_{V_{\mathbb{C}}}\leq C_f$. Here, $B_\rho \subset \mathbb{C}^{\mathbb{N}}$ denotes the ball of polyradius $\rho$:
\[
B_\rho
=
\{
z \in \mathbb{C}^\N
\;
|
\;
|z_j| < \rho_j, \; \forall\, j\in \N
\},
\]
and $V_{\mathbb{C}} \simeq V + iV$ is the complexification of $V$.
\end{definition}

In \cite{schwabzech2019}, it is shown that for a $(b,\epsilon)$-holomorphic function $f$, an approximation rate of $O(\mathcal{N}^{1-1/p})$ can be obtained using a ReLU neural network of depth $O(\log(\mathcal{N})\log\log(\mathcal{N}))$ or using a neural network with a smoother activation function of depth $O(\log\log(\mathcal{N}))$. We show that their construction can also be used to obtain a dimension-independent approximation rate for shallow tanh neural networks. 

\begin{corollary}\label{cor:b-eps-hol}
Let $f:U=[-1,1]^\mathbb{N}\to\mathbb{R}$ be $(b,\epsilon)$-holomorphic for $b\in \ell^p(\mathbb{N})$, $p\in (0,1)$ and $\epsilon>0$. Then there exists a constant $C>0$ such that for every $\mathcal{N}\in\mathbb{N}$ there exists a shallow tanh neural network $\widehat{f}^\mathcal{N}$ of width at most $O\left(\mathcal{N} (C\log(\mathcal{N}))^{C\log(\mathcal{N})}\right)$ such that
\begin{equation}
     \norm{f-\widehat{f}^\mathcal{N}}_{L^\infty(U)} = O\left(\mathcal{N}^{1-1/p}\right) \qquad \text{ for } \mathcal{N}\to \infty.
\end{equation}
\end{corollary}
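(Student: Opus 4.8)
The plan is to reduce the problem to the finite-dimensional analytic case already handled in Corollary~\ref{cor:analytic3}, following the truncation strategy of \cite{schwabzech2019}. First I would recall from \cite[Section~2]{schwabzech2019} that $(b,\epsilon)$-holomorphy with $b\in\ell^p(\N)$ implies that the Taylor generalized polynomial chaos coefficients $\{D^\nu f(0)/\nu!\}_{\nu\in\mathcal F}$ are $\ell^p$-summable (the standard Stechkin/summability argument based on the analytic bounds over admissible polyradii). This gives a sequence of downward-closed index sets $\Lambda_\mathcal{N}\subset\mathcal F$ with $|\Lambda_\mathcal{N}|\le \mathcal N$ such that the truncated Taylor polynomial $f_{\Lambda_\mathcal{N}}(y)=\sum_{\nu\in\Lambda_\mathcal{N}} \frac{D^\nu f(0)}{\nu!}y^\nu$ satisfies
\begin{equation}
\norm{f-f_{\Lambda_\mathcal{N}}}_{L^\infty(U)} = O\left(\mathcal{N}^{1-1/p}\right).
\end{equation}
Moreover the total number of active coordinates $d_\mathcal{N} := |\bigcup_{\nu\in\Lambda_\mathcal{N}}\mathrm{supp}(\nu)|$ and the maximal degree $m_\mathcal{N} := \max_{\nu\in\Lambda_\mathcal{N}}|\nu|$ grow only like $O(\log\mathcal{N})$ — this is exactly the quantitative content extracted in \cite{schwabzech2019}, coming from the fact that $\ell^p$-summability forces the surviving multi-indices to be supported on few coordinates and to have small order.

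Second, I would observe that $f_{\Lambda_\mathcal{N}}$ depends on only $d_\mathcal{N}$ variables and is a polynomial, hence it is an analytic function on $[-1,1]^{d_\mathcal{N}}$ (after the affine change of variables $[-1,1]^{d_\mathcal{N}}\to[0,1]^{d_\mathcal{N}}$, which does not affect the network class) to which Corollary~\ref{cor:analytic3} applies. In fact $f_{\Lambda_\mathcal{N}}$ is a polynomial of degree at most $m_\mathcal{N}$, so I can alternatively invoke Corollary~\ref{cor:mon-mult1} / Lemma~\ref{lem:monomials} directly to mimic it exactly up to accuracy $\exp(-\mathcal{N})$ by a shallow tanh network. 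Either way, there is a shallow tanh network $\widehat g^\mathcal{N}$ on $[-1,1]^{d_\mathcal{N}}$ of hidden-layer width $3\lceil\frac{m_\mathcal{N}+5C d_\mathcal{N}}{2}\rceil\binom{m_\mathcal{N}+(5C+1)d_\mathcal{N}}{m_\mathcal{N}+5C d_\mathcal{N}}$ with $\norm{f_{\Lambda_\mathcal{N}}-\widehat g^\mathcal{N}}_{L^\infty}\le \exp(-\mathcal{N})$. Extending $\widehat g^\mathcal{N}$ to a function $\widehat f^\mathcal{N}$ on $U$ by ignoring the inactive coordinates (a trivial modification of the input weights) and applying the triangle inequality yields
\begin{equation}
\norm{f-\widehat f^\mathcal{N}}_{L^\infty(U)} \le \norm{f-f_{\Lambda_\mathcal{N}}}_{L^\infty(U)} + \exp(-\mathcal{N}) = O\left(\mathcal{N}^{1-1/p}\right).
\end{equation}

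Third, I would bound the width. Since both $m_\mathcal{N}$ and $d_\mathcal{N}$ are $O(\log\mathcal{N})$, the binomial factor $\binom{m_\mathcal{N}+(5C+1)d_\mathcal{N}}{m_\mathcal{N}+5C d_\mathcal{N}}$ is at most $(C'\log\mathcal{N})^{C'\log\mathcal{N}}$ for a suitable constant $C'$ (using $\binom{a+b}{a}\le(a+b)^b$), and the prefactor $3\lceil\frac{m_\mathcal{N}+5Cd_\mathcal{N}}{2}\rceil$ is $O(\log\mathcal{N})$. Absorbing the logarithmic prefactor into the base of the exponent and relabelling constants gives a width bound of $O\left(\mathcal{N}(C\log\mathcal{N})^{C\log\mathcal{N}}\right)$ — note the extra factor $\mathcal{N}$ comes from being a bit generous, or equivalently is available to account for $\mathcal{N}$ itself appearing in the admissible bookkeeping of $\Lambda_\mathcal{N}$ — which is the claimed bound.

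The main obstacle is the second step's bookkeeping: showing that $d_\mathcal{N}=O(\log\mathcal{N})$ and $m_\mathcal{N}=O(\log\mathcal{N})$ for a \emph{downward-closed} set $\Lambda_\mathcal{N}$ realizing the $\ell^p$ truncation rate. This is precisely where the full strength of $(b,\epsilon)$-holomorphy (as opposed to mere $\ell^p$-summability of coefficients) is used in \cite{schwabzech2019}, via careful estimates on the anisotropic Taylor tails over $(b,\epsilon)$-admissible polyradii; I would import these estimates essentially verbatim, since the only novelty here is replacing their deep ReLU (or smooth) network realizing the truncated polynomial with the shallow tanh realization furnished by Corollary~\ref{cor:analytic3}. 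A secondary, purely cosmetic point is checking that restricting a network on $[-1,1]^{d_\mathcal{N}}$ to act on $U=[-1,1]^\N$ by zero-padding the first-layer weight matrix does not change the width or the tanh structure, which is immediate.
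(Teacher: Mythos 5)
There is a genuine gap in your second step: you claim that the total number of active coordinates $d_\mathcal{N} := \abs{\bigcup_{\nu\in\Lambda_\mathcal{N}}\mathrm{supp}(\nu)}$ is $O(\log\mathcal{N})$, and you build the whole argument on realizing $f_{\Lambda_\mathcal{N}}$ as a single polynomial in $d_\mathcal{N}$ variables. What \cite{schwabzech2019} actually provides (and what the paper uses) is the \emph{per-index} bound $\sup_{\nu\in\Lambda_\mathcal{N}}\abs{\nu}\le C(1+\log\mathcal{N})$, hence $\abs{\mathrm{supp}(\nu)}\le C(1+\log\mathcal{N})$ for each individual $\nu$; the union of the supports over the $\mathcal{N}$ multi-indices in $\Lambda_\mathcal{N}$ is only contained in $\{1,\dots,\mathcal{N}\}$ and in general grows algebraically in $\mathcal{N}$, not logarithmically. (Already the downward-closed set containing $0$ and the unit vectors $e_1,\dots,e_{\mathcal{N}-1}$ activates $\mathcal{N}-1$ coordinates.) With $d_\mathcal{N}$ of order $\mathcal{N}$, applying Corollary \ref{cor:mon-mult1} or Corollary \ref{cor:analytic3} in dimension $d_\mathcal{N}$ produces a width of order $\binom{m_\mathcal{N}+d_\mathcal{N}}{m_\mathcal{N}}\sim \mathcal{N}^{C\log\mathcal{N}}=\exp(C\log^2\mathcal{N})$, which is far larger than the claimed $O\bigl(\mathcal{N}(C\log\mathcal{N})^{C\log\mathcal{N}}\bigr)=\mathcal{N}^{O(\log\log\mathcal{N})}$. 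Your parenthetical remark that the extra factor $\mathcal{N}$ is ``available to account for the bookkeeping'' does not repair this, because the blow-up is in the exponent, not a multiplicative prefactor.

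The fix, which is the route the paper takes, is to not approximate $f_{\Lambda_\mathcal{N}}$ as one multivariate polynomial but to approximate each monomial $y\mapsto y^\nu$, $\nu\in\Lambda_\mathcal{N}$, by its own shallow tanh subnetwork: each such monomial genuinely depends on only $\abs{\mathrm{supp}(\nu)}\le C(1+\log\mathcal{N})$ variables and has degree at most $C(1+\log\mathcal{N})$, so Corollary \ref{cor:mon-mult1} with $d=n\leftarrow C(1+\log\mathcal{N})$ gives a subnetwork of width $O\bigl((Ce(1+\log\mathcal{N}))^{C(1+\log\mathcal{N})+1}\bigr)$. Parallelizing the $\abs{\Lambda_\mathcal{N}}=\mathcal{N}$ subnetworks and taking the linear combination with the Taylor coefficients in the output layer is what produces the factor $\mathcal{N}$ in the width bound. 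The remainder of your argument (the truncation estimate, the triangle inequality, and the zero-padding of the first-layer weights) is fine once this restructuring is made.
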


\begin{proof}
There exist a constant $C>0$ and a sequence of index sets $(\Lambda_\mathcal{N})_{\mathcal{N}\in\mathbb{N}} \subset \mathcal{F}$ for which it holds that (cf. \cite[proof of Thm. 3.9]{schwabzech2019})
\begin{equation}
    \sup_{y \in U}\abs{f(y)-\sum_{\nu\in\Lambda_\mathcal{N}} \frac{D^\nu f(0)}{\nu !} y^\nu} = O\left(\mathcal{N}^{1-1/p}\right).
\end{equation}
and such that $\abs{\Lambda_\mathcal{N}}=\mathcal{N}$, supp$(\nu) \subseteq \{1,\ldots, \mathcal{N}\}$ for all $\nu \in \Lambda_\mathcal{N}$ and for all $\mathcal{N}$ \cite[proof of Thm. 3.9]{schwabzech2019}, and $\sup_{\mathcal{N}\in\mathbb{N}} \abs{\nu} \leq C(1+\log(\mathcal{N}))$, where $\abs{\nu} := \sum_{j\in \N} |\nu_j|$ \cite[Thm. 2.7]{schwabzech2019}. The latter implies in particular that $\sup_{\mathcal{N}\in\mathbb{N}} \abs{\text{supp}(\nu)} \leq C(1+\log(\mathcal{N}))$.

Based on these results from \cite{schwabzech2019}, it therefore suffices to show that we can accurately approximate all monomials $y\mapsto y^\nu$ for $\nu \in \Lambda_\mathcal{N}$ with shallow tanh neural networks. For a fixed $\nu \in \Lambda_\mathcal{N}$, the monomial $y\mapsto y^\nu$ can be approximated (to arbitrary accuracy) using Corollary \ref{cor:mon-mult1} with $d=n\leftarrow C(1+\log(\mathcal{N}))$, resulting in a shallow tanh neural network of width $O\left((Ce(1+\log(\mathcal{N})))^{C(1+\log(\mathcal{N}))+1}\right)$. The network $\hat{f}^\mathcal{N}$ from the statement can then be constructed by parallelizing all the networks that approximate the individual monomials, yielding an approximation 
\[
\left\Vert f - \hat{f}^\mathcal{N} \right\Vert_{L^\infty(U)}
= 
O(\mathcal{N}^{1-1/p}).
\]
To be precise, we take the input of this network to be $(y_1, \ldots y_\mathcal{N})$ instead of $y$, which is possible since supp$(\nu) \subseteq \{1,\ldots, \mathcal{N}\}$ for all $\nu \in \Lambda_\mathcal{N}$. As $\abs{\Lambda_\mathcal{N}}=\mathcal{N}$, the resulting width of $\hat{f}^\mathcal{N}$ is $O\left(\mathcal{N} (Ce(1+\log(\mathcal{N})))^{C(1+\log(\mathcal{N}))+1}\right)$, which is asymptotically equivalent to the width from the statement for $\mathcal{N}\to \infty$.
\end{proof}

This result implies in particular, that linear functionals of parametric solutions of PDEs can be approximated by shallow tanh neural networks \cite{schwabzech2019}. Following \cite[Section 4]{schwabzech2019}, the result can also be extended to directly approximate the parametric solution manifold, e.g. to approximate $(b,\epsilon)$-holomorphic operators of the form $f:[-1,1]^\mathbb{N}\to H^1_0([0,1])$.

\subsection{Examples}\label{sec:ex}

In this section, we illustrate the bounds derived in Theorem \ref{thm:main} with some prototypical examples. In particular, we will investigate the width, weights and sparsity of the networks from the proof of Theorem \ref{thm:main}. 

First, we demonstrate how large the networks of Theorem \ref{thm:main} are for a simple function approximation example with $d=1$ and {\color{black} $L^{\infty}$-norm}. We consider the functions
\begin{equation}
    f_a : [0,1]\to [-1,1]: x \mapsto \sin(ax), \quad a>0.
\end{equation}
For a given error tolerance $\varepsilon>0$, we look for a three-layer tanh neural network $\widehat{f}^{N,s}$, as given by Theorem \ref{thm:main}, such that provably
\begin{equation}
    \norm{f_a-\widehat{f}^{N,s}}_\infty \leq \varepsilon.
\end{equation}
From all the networks that satisfy this condition, we take the one with the minimal width. More rigorously, we select
\begin{equation}
    \text{argmin}_{s,N\in\mathbb{N}\::\: (3a/2N)^s/s! < \varepsilon} \max\left\{3\left\lceil\frac{s}{2}\right\rceil+N-1,6N\right\},
\end{equation}
where we used that $\abs{f_a}_{W^{s,\infty}}\leq a^s$ for $s\geq 1$. Alternatively, one can also set $N=1$ in Theorem \ref{thm:main}, which makes the bound more efficient as no more partition of unity is needed, thereby reducing the need for a second hidden layer. This is similar to the proof of Corollary \ref{cor:analytic1}. In this case, we select
\begin{equation}
    \text{argmin}_{s\in\mathbb{N}\::\: (a/2)^s/s! < \varepsilon} \left\{3\left\lceil\frac{s}{2}\right\rceil\right\}.
\end{equation}
We present the result in Figure \ref{fig:exp1}. For the chosen examples, a shallow (i.e. two-layer) tanh neural network achieves a similar level of error as a three-layer network of the same width. This can be explained by the fact that $\abs{f_a}_{W^{s,\infty}}$ grows as $O(a^s)$ and not as $O(a^s s!)$, such that setting $N>1$ is not required for the bound of Theorem \ref{thm:main} to be non-vacuous. Moreover, the networks suggested by Theorem \ref{thm:main} are not unreasonably large for this simple example. Yet, they still remain overestimates: we found that e.g. $f_{2\pi}$ can already be approximated to an error of $1\%$ by a shallow tanh network of width four. Finally, the exponential convergence is evident as a small increase in the network width already leads to a very large improvement in the accuracy. 

\begin{figure}
    \centering
    \includegraphics[width=0.8\textwidth]{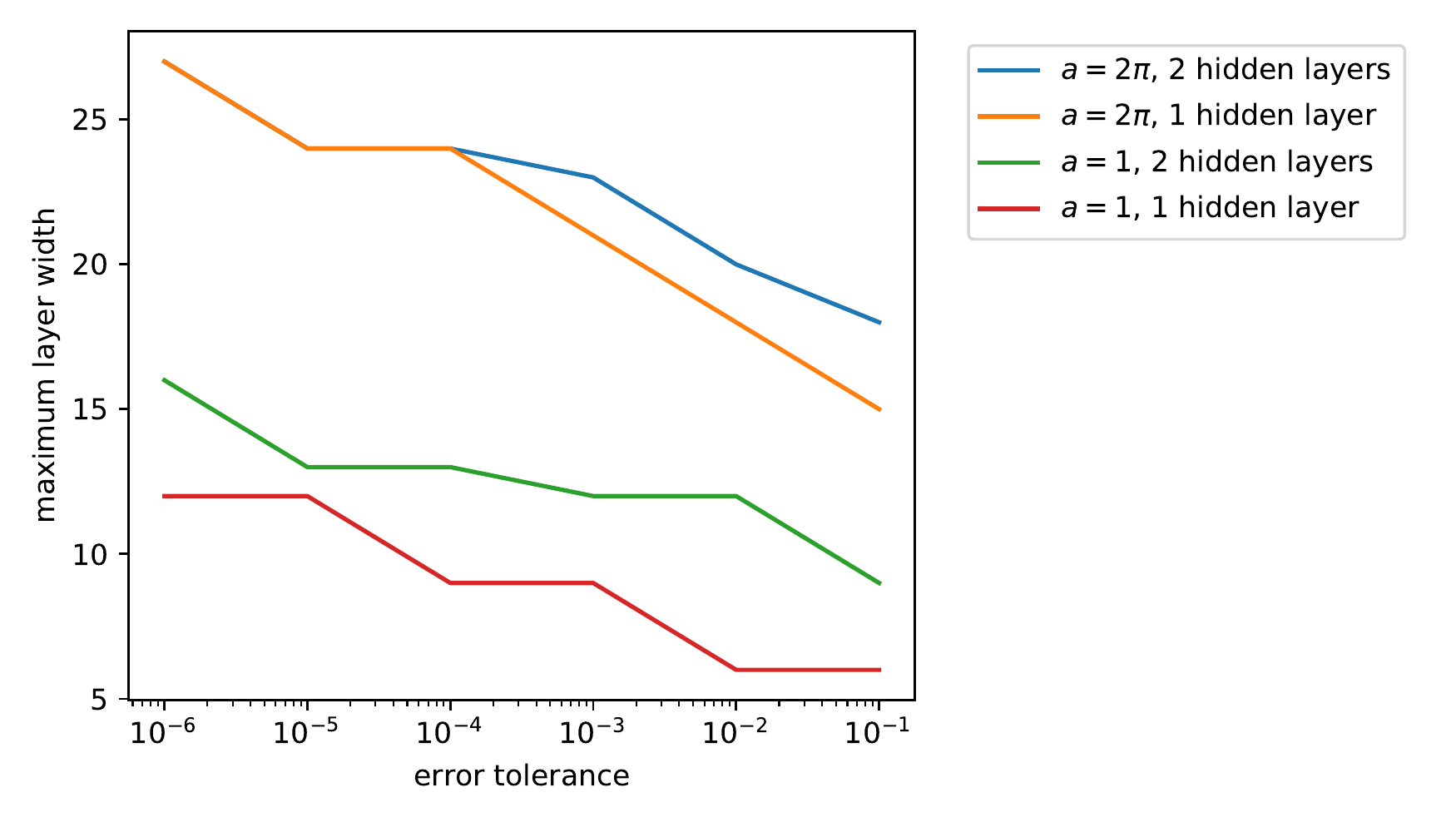}
    \caption{Needed layer width according to Theorem \ref{thm:main} to approximate the function $f_a$ to a given error tolerance. }
    \label{fig:exp1}
\end{figure}

Next, we investigate whether the blow-up of the network weights from the theoretical results is observed in practice. We approximate univariate monomials of odd power in supremum norm on the interval $[0,1]$ using shallow neural networks whose sizes are determined by Lemma \ref{lem:uni-mon-sobolev}. We generate a training set using $2000$ randomly generated points, based on the uniform distribution on $[0,1]$ and minimize the training loss for 2000 epochs using the Adam optimizer \cite{kingma2014adam}. The results can be found in Table \ref{tab:exp2} and show that the weights do not blow up in practice. Rather, the weights remain small for this example. This is possibly a consequence of the phenomenon of implicit regularization in deep learning, e.g. \cite{neyshabur2014search}. 

\begin{table}[]
    \centering
    \begin{tabular}{|c|c|c|c|c|c|}\hline
    power &  1 & 3 & 5 & 7 & 9\\\hline
    MSE    & $4.91 \cdot 10^{-7}$ & $1.54\cdot 10^{-6}$ & $2.87\cdot 10^{-5}$ & $1.13\cdot 10^{-4}$ & $6.13\cdot 10^{-5}$\\\hline
    largest weight & $3.13$ & $2.14$ & $2.27$ & $4.55$ & $4.41$\\\hline
    \end{tabular}
    \caption{MSE and largest weight (in absolute value) of shallow tanh neural networks that approximate univariate monomials with odd powers on $[0,1]$.}
    \label{tab:exp2}
\end{table}

Finally, we show that the neural networks constructed in the proof of Theorem \ref{thm:main} are not very sparse i.e., the fraction of non-zero weights of the network, compared to the total number of weights, is not small. Figure \ref{fig:exp3} shows that the fraction of non-zero weights of the network increases with increasing $s$ and decreasing $N$ (for $d=1$). For analytic functions, it is (asymptotically) more efficient to increase $s$ than $N$, as the convergence rate is $O(N^{-s})$. This lets us conclude that the constructed networks corresponding to sensible choices of $s$ and $N$ are, in general, quite dense. This is in agreement with what one observes in practice. This is in contrast to the theoretical results for deep ReLU (and other) neural networks, where the sparsity of the constructed networks generally increases with increasing accuracy \cite{yarotsky2017error,opschoor2019exponential,guhring2021approximation}. 

\begin{figure}
    \centering
    \includegraphics[width=0.8\textwidth]{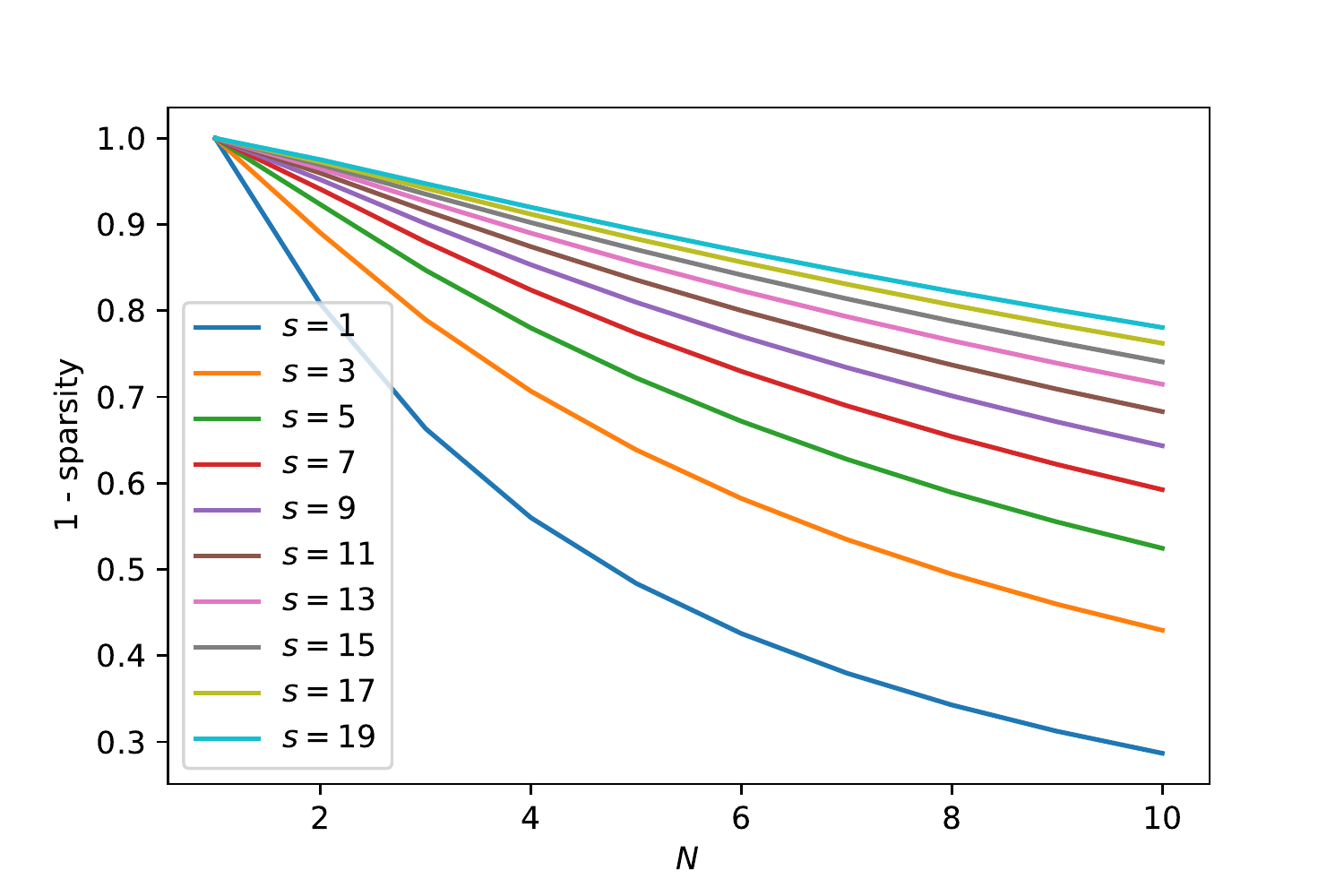}
    \caption{Fraction of non-zero weights (i.e. $1-$ sparsity) of the networks of Theorem \ref{thm:main} for different values of $s$ and $N$ in the case where $d=1$. }
    \label{fig:exp3}
\end{figure}

\section{Summary and discussion}
\label{sec:6}
The main aim of this paper was to provide explicit bounds on the error (in high-order Sobolev) norms with which a neural network with a tanh activation function approximates Sobolev-regular functions, $C^k$ functions and analytic functions. To this end, we prove such explicit bounds on the approximation error for Sobolev functions in Theorem \ref{thm:main} and for analytic functions in Corollary \ref{cor:analytic1}. In both cases, we prove these bounds for a tanh neural network with just $2$ hidden layers. Our proofs are constructive and the construction relies on three key ideas: (1) the approximation of monomials by finite differences of a smooth activation function (Lemma \ref{lem:monomial-sobolev}), (2) the approximation of the multiplication operator (Lemma \ref{cor:mult-shallow}) and (3) the approximation of a partition of unity (Section \ref{sec:pou}). In particular, we prove that a neural network with only hidden layers and a tanh activation function yields the same (or better) approximation rates for Sobolev-regular and for analytic functions.

We elaborate this point further by comparing and contrasting our approach and results with the large body of literature on approximation of functions with artificial neural networks. 

First, we compare our approach, as stated above, with other related works. The simple, yet very effective trick of approximating monomials by finite differences of smooth activation function has been around for decades \cite{pinkus1999approximation}, but is still a building block in the constructive proofs of many recent papers on neural network expressivity, e.g. \cite{rolnick2017power,ohn2019smooth,siegel2020approximation,guhring2021approximation}. To the best of the authors' knowledge, all available results build upon the observation that there is a $x\in \mathbb{R}$ such that $\sigma^{(n)}(x) \neq 0$ for all $n\in\mathbb{N}$. As such, this construction does not allow for explicit estimates on the approximation error and the network weights (see Remark \ref{rem:why-odd}). \emph{Our key novelty in this paper is to circumvent this issue by first approximating univariate monomials of odd powers and then expanding to even powers and multivariate monomials.} This allows us to obtain \emph{uniform} explicit bounds for the error in approximating multivariate monomials of a varying degree and paves the way for explicit bounds on the approximation error. 

The approximation of the multiplication operator in $d$ dimensions by a shallow neural network (Corollary \ref{cor:mult-shallow}) was discussed in \cite[Appendix A]{lin2017does} for activation functions $\sigma$ that satisfy that $\sigma(x) = \sum_k \sigma_k x^k$ where $\sigma_k \neq 0$ for $0 \leq k \leq d$. In particular, they prove that $2^d$ neurons are both sufficient and necessary. However, this construction does not allow for explicit estimates (again cf. Remark \ref{rem:why-odd}). Here, we propose a \emph{novel construction} of the multiplication operator with a shallow tanh neural network. 

As for the partition of unity, which serves as an essential ingredient in our proofs, an exact partition of unity for ReLU neural networks can be readily constructed \cite{yarotsky2017error}. Approximations of partitions of unity with neural networks with sigmoidal activation function can be found in \cite{costarelli2013approximation,costarelli2013multivariate,ohn2019smooth} and a general framework for approximations of partitions of unity was proposed in \cite{guhring2021approximation}. In this paper, we have constructed approximations of partitions of unity with shallow tanh neural networks, that were motivated by localized polynomials which arise in the Bramble-Hilbert Lemma and the Taylor's theorem. Compared to the other works mentioned above, our results on partitions of unity stand out for the explicit bounds on the approximation error and the weights. 

Given the afore-mentioned novel ideas, we were able to obtain explicit bounds on the approximation error. A suitable avenue to compare our results with results obtained in related works lies in the approximation error bounds for analytic functions. We recall that we prove approximation rates to analytic functions in the $W^{k,\infty}$-norm. Although approximation rates in this norm were proved for Sobolev functions in the very recent paper \cite{guhring2021approximation}, it is unclear if their results can be extended in an efficient way for analytic functions. A key reason for this lies in the fact that the widths of their constructed networks are not explicitly stated and the depth increases with maximal degree of monomials, inhibiting uniform control that is necessary for approximating analytic functions. 

Exponential convergence (in terms of network size) of neural networks for analytic functions in the $L^{\infty}$-norm was first proven in \cite{mhaskar1996neural} for neural networks with smooth activation functions and in \cite{wang2018exponential} for ReLU neural networks. In \cite{opschoor2019exponential,herrmann2021constructive}, the authors prove exponential convergence in $W^{1,\infty}${-norm} for ReLU neural networks. We compare our results for approximation of analytic functions with these papers in Table \ref{tab:comparison}. For \cite{mhaskar1996neural}, the parameter $\rho$ is related to the polyradius of the ellipse to which the function needs to be holomorphically extendable. In \cite{wang2018exponential}, the additional assumption is made that the analytic function admits a Taylor expansion on $[-1,1]^d$ that converges absolutely and uniformly, which does not hold for general analytic functions. For \cite{opschoor2019exponential}, the parameter $\beta$ is at least inversely proportional to the dimension $d$ and also depends on the radius of the Bernstein ellipse to which the analytic function can be holomorphically extended. From Table \ref{tab:comparison}, one can clearly observe that Corollary \ref{cor:analytic2} yields an asymptotically faster convergence in terms of network width than the other related works. In addition, our results hold in stronger norms and we provide explicit bounds on the approximation error and weights, in contrast to other papers.  For instance in \cite{mhaskar1996neural,opschoor2019exponential}, the convergence rate even depends on the (unknown) polyradius of the ellipse to which the function can be holomorphically extended. Lastly, note that Corollary \ref{cor:analytic3} assumes that $\abs{f}_{W^{s,\infty}([0,1]^d)}\leq C^s$ for some $C>0$ and all $s\in \mathbb{N}$, which implies that $f$ must be entire. In \cite[Theorem 5.4]{HSZ20_2810},  exponential expressivity of ReLU neural networks for entire functions is proven. Compared to Corollary \ref{cor:analytic3}, this result is more efficient in terms of non-zero weights, but requires $O\left(\log(1/\epsilon)\log\log(1/\epsilon)\right)$ layers to obtain an approximation with accuracy $\epsilon>0$.   

All of our approximation results, including the approximation bounds on analytic functions hold for a tanh neural network with only two hidden layers. This result, see also \cite{mhaskar1996neural}, runs contrary to the prevailing view that depth of neural networks is essential for function approximation and establishes that shallow but wide neural networks can be very expressive when it comes to function approximation and might provide some justification for the use of very shallow and wide neural networks in scientific computing \cite{LSK1,DeepOnets,MM3}. 

Finally, it is essential to mention that although we highlight our contribution in terms of the tanh activation function as it is the most commonly used of the smooth activation functions. Our results apply verbatim to the logistic or sigmoid activation function as it is a shifted and scaled tanh. However, our constructions also apply to a much larger class of smooth activation functions as elaborated in Section \ref{sec:3}.  

\begin{table}[]
    \centering
    \resizebox{\textwidth}{!}{%
    \begin{tabular}{|c|c|c|c|c|c|}
    \hline
    source & norm & activation &  depth & width& error bound \\\hline
    \cite[Thm 2.3]{mhaskar1996neural} & $L^\infty([-1,1]^d)$ & $C^\infty$ & 2 & $\mathcal{N}$ & $O\left(\rho^{-\mathcal{N}^{1/d}}\right)$ \\\hline
     \cite[Thm. 6]{wang2018exponential} &$L^\infty([-1+\delta, 1 - \delta]^d)$ & ReLU & $O(\mathcal{N})$ & $d+4$ & $O\left(\exp(-d\delta \mathcal{N}^{1/2d})\right)$\\\hline
     \cite[Thm. 3.6]{opschoor2019exponential} & $W^{1,\infty}([-1,1]^d)$ & ReLU & $O(\mathcal{N}^{\frac{1}{d+1}}\log(\mathcal{N}))$ & $O(\mathcal{N})$ & $O\left(\exp(-\beta \mathcal{N}^{\frac{1}{d+1}})\right)$ \\\hline
     this work & $W^{k,\infty}([0,1]^d)$ & tanh & 3 & $O(\mathcal{N})$ & $O\left(\mathcal{N}^{\frac{k}{d+1}}\exp(- \mathcal{N}^{\frac{1}{d+1}}\ln(\mathcal{N}))\right)$ \\\hline
    \end{tabular}
    }
    \caption{Comparison of upper bounds on the approximation error for analytic functions by neural networks. }
    \label{tab:comparison}
\end{table}
We conclude by pointing out some limitations of the presented results. The most important limitation is the fact that the amplitude of the weights in our constructive network can grow very fast (Theorem \ref{thm:main}). In practice, implicit and explicit regularization mechanisms during training will ensure that such growth of weights will not happen. In fact, we present examples to empirically show that the gradient-descent based training procedure manages to find rather small weights and biases that still provide a very high accuracy. We therefore believe that our bounds are useful in practice, more as upper bounds for setting the network size.  

Another limitation, which we share with other published results on approximation with neural networks, is that our results suffer from the curse of dimensionality. We could however prove that it is possible to obtain a dimension-independent convergence rate to analytic functions in Corollary \ref{cor:analytic2}. Another possible mitigation of the curse of dimensionality for the approximation rate is when the underlying map is $(b,\epsilon)$-holomorphic, see Corollary \ref{cor:b-eps-hol} for the precise result. However in these cases, the constants (and hence the network size) can still grow exponentially in the input dimension.  Fortunately, one can argue that a large number of high-dimensional functions are in fact compositions of low-dimensional functions, which might explain the success of deep learning in high dimensions \cite{poggio2017and}. For instance, the Kolmogorov-Arnold superposition theorem \cite{kolmogorov1957representation} even states that all $d$-variate functions are in fact compositions of univariate functions and the sum of $d$ numbers, which also can be used to lessen the curse of dimensionality \cite{montanelli2020error}. 

Finally, the weights in our constructed networks are continuous with respect to the function of interest $f$. It has been proven that the best neural approximation cannot be achieved using continuous weight selection \cite{kainen1999approximation}. An example of how discontinuous weight selection can improve the approximation rate can be found in \cite{yarotsky2018optimal}. 

\section*{Acknowledgements}
SL and SM received funding from the European Research Council (ERC) under the European Union’s Horizon 2020 research and innovation programme (grant agreement No. 770880)

\appendix

\section{Auxiliary results}

\begin{lemma}\label{lem:tanh-derivatives}
It holds for every $n\in\mathbb{N}$ that $\abs{\tanh^{(2n-1)}{(0)}} \geq 1$. 
\end{lemma}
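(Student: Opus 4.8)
The plan is to exploit the Riccati-type identity $y'=1-y^2$ satisfied by $y=\tanh$, together with the initial value $y(0)=0$, to obtain a closed recursion for the derivatives of $\tanh$ at the origin. Writing $a_n:=\tanh^{(n)}(0)$ and differentiating $y'=1-y^2$ exactly $n$ times by the Leibniz rule gives $y^{(n+1)}=-\sum_{k=0}^n\binom{n}{k}y^{(k)}y^{(n-k)}$ for $n\ge 1$; evaluating at $0$ and discarding the terms $k=0$ and $k=n$, which vanish because $y(0)=0$, yields
\[
a_{n+1}=-\sum_{k=1}^{n-1}\binom{n}{k}a_k\,a_{n-k},\qquad n\ge 1.
\]

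Since $\tanh$ is odd, $a_{2j}=0$, so only odd-order derivatives survive. Setting $b_j:=a_{2j-1}=\tanh^{(2j-1)}(0)$ and specializing the recursion to $n=2m$, only the summands with $k$ odd contribute, i.e.\ $k=2j-1$ with $1\le j\le m$, and after the reindexing $2m-k=2(m-j+1)-1$ one obtains, for $m\ge 1$,
\[
b_{m+1}=-\sum_{j=1}^m\binom{2m}{2j-1}\,b_j\,b_{m-j+1}.
\]
The crucial structural feature is that both indices on the right, $j$ and $m-j+1$, lie in $\{1,\dots,m\}$, which makes the recursion amenable to a strong induction.

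I would then prove by strong induction on $m$ the statement that $b_m\in\mathbb{Z}$ and $(-1)^{m-1}b_m\ge 1$. The base case is $b_1=\tanh'(0)=1$. For the induction step, the hypothesis gives that each factor $b_j$ (resp.\ $b_{m-j+1}$) is a nonzero integer of sign $(-1)^{j-1}$ (resp.\ $(-1)^{m-j}$), so each summand $\binom{2m}{2j-1}b_j\,b_{m-j+1}$ is a nonzero integer of sign $(-1)^{j-1+m-j}=(-1)^{m-1}$. Hence the whole sum is an integer of sign $(-1)^{m-1}$ and of absolute value at least $1$ (indeed at least $2m$, coming from the $j=1$ term), and negating it yields $b_{m+1}\in\mathbb{Z}$ with $(-1)^m b_{m+1}\ge 1$. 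Taking $m=n$ gives $\lvert\tanh^{(2n-1)}(0)\rvert=\lvert b_n\rvert\ge 1$, which is the claim.

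The only mildly delicate point is the combinatorial bookkeeping in passing from the recursion for $a_{n+1}$ to the reduced recursion for $b_{m+1}$: one must check that the parity constraints leave exactly the terms $k=2j-1$ with $1\le j\le m$ and that the reindexing of $2m-k$ is correct, after which the sign count $(-1)^{j-1+m-j}=(-1)^{m-1}$ is immediate. Everything else is routine. (An alternative, less self-contained route would be to note from the same ODE that $a_n\in\mathbb{Z}$ for all $n$, and then use $\tanh(ix)=i\tan x$ together with the positivity of the tangent numbers to rule out $a_{2n-1}=0$; the recursion above has the advantage of not citing that fact.)
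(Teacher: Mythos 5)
Your proof is correct, and it takes a genuinely different route from the paper. The paper expands $\tanh$ in its power series $\sum_{n\ge 1}\frac{2^{2n}(2^{2n}-1)B_{2n}}{(2n)!}x^{2n-1}$ and reads off $\tanh^{(2n-1)}(0)=\frac{2^{2n}(2^{2n}-1)B_{2n}}{2n}$, then asserts that this quantity (the $n$-th tangent number, up to sign) has absolute value at least $1$; that last step is stated without justification and implicitly leans on known facts about Bernoulli/tangent numbers. You instead work entirely from the Riccati identity $\tanh'=1-\tanh^2$: differentiating $n$ times by Leibniz, evaluating at $0$, and using oddness yields the closed recursion $b_{m+1}=-\sum_{j=1}^m\binom{2m}{2j-1}b_jb_{m-j+1}$ with both indices in $\{1,\dots,m\}$, and your strong induction (sign $(-1)^{m-1}$, integrality, $\abs{b_m}\ge 1$) is airtight — I checked the parity bookkeeping and the reindexing $2m-k=2(m-j+1)-1$, and the base case $b_1=1$ and the sign count $(-1)^{j-1+m-j}=(-1)^{m-1}$ are all correct (the recursion reproduces $1,-2,16,\dots$ as it should). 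What your approach buys is self-containedness and strictly more information: you get integrality, strict sign alternation, and even the sharper bound $\abs{b_{m+1}}\ge 2m\abs{b_m}$ for free, with no appeal to Bernoulli numbers. What the paper's approach buys is brevity and an explicit closed formula for the derivative values, which could be useful elsewhere; but as a proof of the stated inequality yours is the more complete argument.
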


\begin{proof}
For $\abs{x}<\frac{\pi}{2}$, the power series expansion of tanh at $x$ is given by
\begin{equation}
    \tanh{(x)} = \sum_{n=1}^\infty \frac{2^{2n}(2^{2n}-1)B_{2n}}{(2n)!}x^{2n-1}, 
\end{equation}
where $B_n$ is the $n$-th Bernoulli number. One can then calculate that for every $n\in\mathbb{N}$, 
\begin{equation}\label{eq:lb-tanh0}
    \abs{\tanh^{(2n-1)}{(0)}} = \abs{\frac{2^{2n}(2^{2n}-1)B_{2n}}{2n}} \geq 1. 
\end{equation} 
This concludes the proof of the statement. 
\end{proof}

\begin{lemma}\label{lem:alpha}
Let $s\in 2\mathbb{N}-1$. It holds that
\begin{equation}
    \inf_{\alpha>0} \frac{2^{s/2}(1+\alpha)^{(s^2+s)/2}}{\alpha^{s/2}} \leq \sqrt{e} (2es)^{s/2},  
\end{equation}
where the infimum is reached at $\alpha=1/s$.
\end{lemma}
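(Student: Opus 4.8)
The plan is to minimize the function $g(\alpha) := 2^{s/2}(1+\alpha)^{(s^2+s)/2}\alpha^{-s/2}$ over $\alpha > 0$ by an elementary first‑derivative test, and then simply evaluate at the minimizer. First I would pass to the logarithm, writing $\log g(\alpha) = \tfrac{s}{2}\log 2 + \tfrac{s^2+s}{2}\log(1+\alpha) - \tfrac{s}{2}\log\alpha$, and differentiate to obtain $(\log g)'(\alpha) = \tfrac{s^2+s}{2(1+\alpha)} - \tfrac{s}{2\alpha}$. Setting this equal to zero gives $\alpha(s^2+s) = s(1+\alpha)$, that is $\alpha s^2 = s$, so the unique critical point on $(0,\infty)$ is $\alpha = 1/s$.

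Next I would check that this critical point is the global minimum. Since $(s^2+s)/2 > s/2$ for every $s \ge 1$, the exponent of $(1+\alpha)$ dominates that of $\alpha$, so $g(\alpha) \to +\infty$ both as $\alpha \to 0^+$ and as $\alpha \to \infty$. Hence the infimum of $g$ over $(0,\infty)$ is attained at an interior point, which must be a critical point, and $\alpha=1/s$ is the only one; this justifies the claim in the statement that the infimum is reached at $\alpha=1/s$.

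Finally, I would evaluate at $\alpha = 1/s$: one has $g(1/s) = 2^{s/2}\,(1+1/s)^{(s^2+s)/2}\, s^{s/2} = 2^{s/2}\,s^{s/2}\,\big((1+1/s)^s\big)^{(s+1)/2}$. Applying the elementary bound $(1+1/s)^s \le e$, this is at most $2^{s/2}\,s^{s/2}\,e^{(s+1)/2} = \sqrt{e}\,(2es)^{s/2}$, which is exactly the asserted bound. I do not anticipate any real obstacle here; the only point requiring a little care is the sign bookkeeping in the derivative test, which is routine.
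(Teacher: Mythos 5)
Your argument is correct and follows exactly the same route as the paper: the paper's ``elementary calculations'' are precisely the first-derivative computation you carry out to locate the minimizer at $\alpha=1/s$, and both proofs conclude with the same evaluation and the bound $(1+1/s)^s\leq e$. You merely make explicit the steps the paper leaves implicit, including the check that the critical point is a global minimum.
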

\begin{proof}
Some elementary calculations show that
\begin{equation}
     \inf_{\alpha>0} \frac{2^{s/2}(1+\alpha)^{(s^2+s)/2}}{\alpha^{s/2}} = 2^{s/2}\left(1+\frac{1}{s}\right)^{(s^2+s)/2} s^{s/2}. 
\end{equation}
Moreover, it holds that
\begin{equation}
    \left(1+\frac{1}{s}\right)^{(s^2+s)/2} = \left(\left(1+\frac{1}{s}\right)^s\right)^{(s+1)/2} \leq e^{(s+1)/2}.
\end{equation}
The statement follows immediately from these inequalities. 
\end{proof}

\begin{lemma}\label{lem:dysoninv}
Let $n,q\in\mathbb{N}$ and $P_{n,q} = \{\alpha\in  \mathbb{N}_0^q : \abs{\alpha} = n\}$. If $D = (D_{\alpha,\beta})_{\alpha,\beta\in P_{n,q}}$ is defined as in \eqref{eq:dyson}, then $D$ is invertible and 
\begin{equation}
    \norm{D^{-1}}_\infty \leq (n!)^3 \abs{P_{n,q}}^2 2^n. 
\end{equation}
\end{lemma}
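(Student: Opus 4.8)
The plan is to reduce the estimate, via an explicit factorization of $D$, to bounding the inverse of an integer matrix, and then to control that inverse through an explicit combinatorial (polarization) formula rather than through a generic determinant bound. First I would record the factorization $D=\tfrac{1}{n^{n}}A\Lambda$, where $A=(\alpha^{\beta})_{\alpha,\beta\in P_{n,q}}$ is an integer matrix and $\Lambda=\operatorname{diag}\!\big(\binom{n}{\beta}\big)_{\beta\in P_{n,q}}$. Equivalently, writing $\ell_{\alpha}:=\tfrac1n\sum_{i}\alpha_{i}\omega_{i}$, the matrix $D$ represents the identity on the $m:=\abs{P_{n,q}}$-dimensional space of homogeneous degree-$n$ polynomials in $q$ variables, carrying the monomial basis $\{\omega^{\beta}\}$ to the system $\{\ell_{\alpha}^{n}\}$ through the multinomial identity \eqref{eq:multinomial}. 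Since $n$-th powers of linear forms span that space and there are exactly $m$ of the $\ell_{\alpha}$, the $\ell_{\alpha}^{n}$ form a basis, which re-proves the invertibility of $D$ (hence of $A$) asserted in \cite{moak1990combinatorial}. As $\binom{n}{\beta}\ge 1$, this gives $\norm{D^{-1}}_{\infty}=n^{n}\norm{\Lambda^{-1}A^{-1}}_{\infty}\le n^{n}\norm{A^{-1}}_{\infty}$, so it suffices to estimate the maximal absolute row sum of $A^{-1}$.

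The entry $(D^{-1})_{\gamma\alpha}$ is precisely the coefficient of $\ell_{\alpha}^{n}$ in the expansion of $\omega^{\gamma}$ (the multinomial Stirling numbers of \cite{moak1990combinatorial}), and $A^{-1}=\tfrac{1}{n^{n}}\Lambda D^{-1}$. To make these coefficients explicit I would polarize the monomial $\omega^{\gamma}$: applying the $\gamma$-fold forward finite difference to $t\mapsto\big(\sum_{i}t_{i}\omega_{i}\big)^{n}$ and using that this polynomial is homogeneous of degree $n=\abs{\gamma}$ — so the difference is independent of the step size and equals $n!\,\omega^{\gamma}$ up to that power of the step — yields
\[
\omega^{\gamma}=\frac{1}{n!}\sum_{0\le k\le\gamma}(-1)^{n-\abs{k}}\binom{\gamma}{k}\Big(\sum_{i=1}^{q}k_{i}\omega_{i}\Big)^{n}.
\]
Each linear form $\sum_{i}k_{i}\omega_{i}$ with $\abs{k}=n$ equals $n\,\ell_{k}$; the remaining terms, with $\abs{k}<n$, are re-expanded over the basis $\{\ell_{\alpha}^{n}\}_{\abs{\alpha}=n}$ using \eqref{eq:multinomial} and iterating, or, more efficiently, Moak's closed form for the inverse.

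Tracking the contributions then gives an entrywise bound $\abs{(D^{-1})_{\gamma\alpha}}\le (n!)^{3} m\,2^{n}$: the factor $2^{n}$ comes from $\sum_{k\le\gamma}\binom{\gamma}{k}=2^{\abs{\gamma}}=2^{n}$, the powers of $n!$ from the multinomial coefficients $\binom{n}{\beta}\le n!$ together with the interplay of the $1/n!$ in the polarization and the $n^{n}$ in the normalization $\ell_\alpha$, and the single factor $m$ from collecting terms over the (at most $m$) basis elements $\ell_\alpha^n$. Summing the $\abs{(D^{-1})_{\gamma\alpha}}$ over the $m$ columns of a fixed row then yields $\norm{D^{-1}}_{\infty}\le (n!)^{3} m^{2} 2^{n}$, which is the assertion.

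The hard part is the middle step: passing from the over-complete polarization identity, which involves the forms $\sum_{i}k_{i}\omega_{i}$ for all $k\le\gamma$ rather than only those on the simplex $\abs{k}=n$, to a sufficiently tight bound on the coefficients relative to the genuine basis $\{\ell_{\alpha}^{n}\}_{\abs{\alpha}=n}$, so that the final estimate stays polynomial in $m=\abs{P_{n,q}}$ — a naive Cramer-plus-Hadamard bound on the cofactors of $A$ is exponential in $m$ and hence useless here. This is exactly the bookkeeping organized by the multinomial-Stirling-number calculus of \cite{moak1990combinatorial}, and I would rely on their closed form for the inverse while supplying the elementary size estimates above; equivalently one can exploit the block-triangular structure of $A$ obtained by ordering $P_{n,q}$ according to the number of vanishing coordinates of $\alpha$ (the rows with $\alpha_{i}=0$ are supported on $\beta_{i}=0$), which makes both the invertibility and the recursive entrywise bounds transparent.
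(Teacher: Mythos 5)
Your reduction to bounding $A^{-1}$ and your spanning argument for invertibility are fine (and the invertibility argument via $n$-th powers of linear forms is arguably more self-contained than the paper's bare citation of \cite{moak1990combinatorial}). The gap is in the quantitative step, which is the entire content of the lemma. The polarization identity expresses $\omega^\gamma$ over the forms $\big(\sum_i k_i\omega_i\big)^n$ for \emph{all} $0\le k\le\gamma$, and only the terms with $\abs{k}=n$ lie in the basis $\{\ell_\alpha^n\}_{\abs{\alpha}=n}$; re-expanding the terms with $\abs{k}<n$ over that basis is precisely the problem of computing $D^{-1}$, so ``iterating'' is circular, and equation \eqref{eq:multinomial} goes in the wrong direction for this purpose (it expands $\ell_\alpha^n$ in monomials, not the reverse). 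Consequently the accounting you give for the entrywise bound $(n!)^3 m\, 2^n$ --- $2^n$ from $\sum_{k\le\gamma}\binom{\gamma}{k}$, $(n!)^3$ from an unspecified ``interplay,'' a single factor of $m$ from ``collecting terms'' --- is not a derivation; nothing in the sketch rules out picking up a factor of $m$ (or worse) at each stage of the re-expansion, which is exactly the failure mode you yourself flag as the reason a Cramer--Hadamard bound is useless.

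Your fallback, to ``rely on Moak's closed form for the inverse while supplying the elementary size estimates above,'' is essentially what the paper does, but the estimates you supplied do not transfer. Moak's formula is $D^{-1}=BL^{-1}\Lambda^{-1}SLB$, where $S$ is built from Stirling numbers of the \emph{first} kind, $B$ from signed binomial coefficients, and $L,\Lambda$ are diagonal with factorial entries; the paper bounds each factor entrywise using $\abs{s(k,m)}\le\sum_{k}\abs{s(k,m)}=m!$ and $\max_{\alpha'\in I_{n,q}}\alpha'!\le n!$, chains these through the five-fold product to get the entrywise bound $(n!)^3\abs{P_{n,q}}2^n$ (the $2^n$ arising from $\sum_{\gamma'\le\alpha'}\binom{\alpha'}{\gamma'}$, the single factor $\abs{P_{n,q}}$ from the matrix product with $B$), and then the row sum supplies the second factor $\abs{P_{n,q}}$. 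None of this bookkeeping appears in your proposal, and the polarization coefficients you computed are not the Stirling numbers entering Moak's formula. To close the gap you would need either to state that factorization and carry out these entrywise estimates, or to make the block-triangular recursion you mention at the end fully explicit with a genuine induction on the support size of $\alpha$.
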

\begin{proof}
Following \cite{moak1990combinatorial}, let $P_{n,q} = \{\alpha\in  \mathbb{N}_0^q : \abs{\alpha} = n\}$ and $I_{n,q} = \{\alpha'\in  \mathbb{N}_0^{q-1} : \abs{\alpha'} \leq n\}$. Let $s(k,m)$ be Stirling numbers of the first kind, for $k \leq m$, defined by
\begin{equation}\label{eq:stirling-numberrs}
    x(x-1)\cdots(x-m+1) = \sum_{k=0}^m s(k,m) x^k. 
\end{equation}
For $\alpha', \beta' \in I_{n,q}$, define $S_{\alpha', \beta'} = \prod_{i=1}^{q-1} s(\alpha'_i, \beta'_i)$, where $S(\alpha', \beta') = 0$ unless $\alpha'_i\leq \beta'_i$ for all $i$.  Denote by $S$ the corresponding matrix, where the order of rows and columns reflects the lexicographic order on $I_{n,q}$. Next, define $B$ by
\begin{equation}
    B_{\alpha', \beta'} = \binom{\alpha'}{\beta'}(-1)^{\abs{\beta'}} := \prod_{i=1}^{q-1} \binom{\alpha'_i}{\beta'_i}(-1)^{\beta'_i} \quad \text{for }\alpha', \beta' \in I_{n,q}.
\end{equation}
Finally, let $L$ and $\Lambda$ be diagonal matrices defined by $L_{\alpha',\alpha'} = (-1)^{\abs{\alpha'}}/\alpha'!$ and $\Lambda_{\alpha',\alpha'} = n(n-1)\cdots (n-\abs{\alpha'}+1)/\alpha'!$ for $\alpha'\in I_{n,q}$. It then holds that \cite[Corollary 2]{moak1990combinatorial}, 
\begin{equation}\label{eq:dysoninv}
    D^{-1} = BL^{-1}\Lambda^{-1}SLB. 
\end{equation}
To prove an upper bound on the supremum norm of $D^{-1}$, we first note that $\abs{s(k,m)} \leq \sum_{k=0}^m \abs{s(k,m)} = m!$ (which can be seen by setting $x=-1$ in \eqref{eq:stirling-numberrs}) and thus $S(\alpha', \beta') \leq \beta'!$. In addition, it holds for any $\alpha'\in I_{n,q}$ that
\begin{equation}
    1 \leq \frac{n!}{\alpha'!(n-\abs{\alpha'})!} \leq \frac{n!}{\alpha'!}, 
\end{equation}
which gives us that $\max_{\alpha'\in I_{n,q}} (\alpha'!) \leq n!$. 
This gives us consequently
\begin{align}
    &\abs{L^{-1}\Lambda^{-1}S}_{\alpha', \beta'} \leq (\alpha'!)^2 \beta'! \leq (n!)^2 \beta'!, \\
    &\abs{L^{-1}\Lambda^{-1}SL}_{\alpha', \beta'} \leq (n!)^2, \\
    &\abs{L^{-1}\Lambda^{-1}SLB}_{\alpha', \beta'} \leq (n!)^2 \sum_{\gamma'\in I_{n,q}} \binom{\gamma'}{\beta'} \leq (n!)^3 \abs{I_{n,q}},\\
    &\abs{BL^{-1}\Lambda^{-1}SLB}_{\alpha', \beta'} \leq (n!)^3 \abs{I_{n,q}} \sum_{\gamma'\in I_{n,q}}\binom{\alpha'}{\gamma'} = (n!)^3 \abs{I_{n,q}} 2^{\abs{\gamma'}} \leq (n!)^3 \abs{I_{n,q}} 2^n.
\end{align}
This and \eqref{eq:dysoninv} let us conclude that $\norm{D^{-1}}_\infty \leq (n!)^3 \abs{I_{n,q}}^2 2^n$. The lemma then follows from the existence of a one-to-one correspondence of elements in $I_{n,q}$ and $P_{n,q}$. 
\end{proof}

\begin{lemma}\label{lem:bound-der-tanh}
Let $m\in \mathbb{N}$. Then it holds that 
\begin{equation}
    \abs{\sigma^{(m)}(x)} \leq (2m)^{m+1}\min\{\exp(-2x),\exp(2x)\} \quad \text{for all } x\in \mathbb{R}.
\end{equation}
\end{lemma}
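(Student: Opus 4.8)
The plan is to change variables so that all derivatives of $\tanh$ become simple rational functions of a new variable, and then to control the polynomial coefficients that appear. Write $u=e^{-2x}$, so that $\sigma(x)=\tanh(x)=\frac{1-u}{1+u}=-1+\frac{2}{1+u}$, and introduce the operator $\theta:=u\frac{d}{du}$. Since $\frac{du}{dx}=-2u$, the chain rule gives $\frac{d}{dx}=-2\theta$, hence $\sigma^{(m)}(x)=(-2)^m\theta^m[\sigma]$ for every $m\in\mathbb N$. As $\theta$ annihilates constants, for $m\ge 1$ this simplifies to $\sigma^{(m)}(x)=2(-2)^m\,\theta^m\!\big[\tfrac{1}{1+u}\big]$, so that $\abs{\sigma^{(m)}(x)}=2^{m+1}\,\big|\theta^m[\tfrac{1}{1+u}]\big|$ evaluated at $u=e^{-2x}$. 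It therefore suffices to bound $\theta^m[\tfrac{1}{1+u}]$.

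Next I would show, by induction on $m\ge 1$, that
\[
\theta^m\!\Big[\frac{1}{1+u}\Big]=\frac{(-1)^m\,u\,A_m(u)}{(1+u)^{m+1}},
\]
where $A_m$ is a polynomial of degree at most $m-1$, with $A_1\equiv 1$ and $A_{m+1}(u)=(mu-1)A_m(u)-u(1+u)A_m'(u)$. The induction step is just the quotient rule: applying $\theta=u\frac{d}{du}$ to the displayed expression and simplifying produces exactly the stated recursion, and the factor $u$ in the numerator reappears because of the $u$ in front of $\frac{d}{du}$; the degree bound is preserved since $(mu-1)A_m$ and $u(1+u)A_m'$ both have degree at most $m$.

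The key step is an a priori bound on the coefficients of $A_m$. Writing $A_m(u)=\sum_{j=0}^{m-1}a_{m,j}u^j$, the recursion translates to $a_{m+1,k}=(m-k+1)a_{m,k-1}-(k+1)a_{m,k}$, with the convention that coefficients with indices outside the valid range are zero; since for $k\in\{0,\dots,m\}$ both $m-k+1$ and $k+1$ are positive, the triangle inequality together with reindexing gives
\[
\sum_{k}\abs{a_{m+1,k}}\le\sum_{j=0}^{m-1}\big[(m-j)+(j+1)\big]\abs{a_{m,j}}=(m+1)\sum_{j}\abs{a_{m,j}}.
\]
Starting from $\sum_j\abs{a_{1,j}}=1$, induction yields $\sum_j\abs{a_{m,j}}\le m!$, and hence $\abs{A_m(u)}\le m!\max\{1,u^{m-1}\}\le m!\,(1+u)^{m-1}$ for $u\ge 0$. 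Plugging this in and writing $v:=1/u=e^{2x}$,
\[
\abs{\sigma^{(m)}(x)}=2^{m+1}\frac{u\abs{A_m(u)}}{(1+u)^{m+1}}\le 2^{m+1}m!\,\frac{u}{(1+u)^2}=2^{m+1}m!\,\frac{v}{(1+v)^2}\le 2^{m+1}m!\,\min\{u,v\},
\]
where the middle equality is the identity $\frac{u}{(1+u)^2}=\frac{v}{(1+v)^2}$ and the last inequality uses $\frac{t}{(1+t)^2}\le t$ for $t\ge 0$ applied with $t=u$ and $t=v$. Since $2^{m+1}m!\le 2^{m+1}m^{m+1}=(2m)^{m+1}$ and $\min\{u,v\}=\min\{e^{-2x},e^{2x}\}$, this is precisely the claim. (Alternatively one could prove only the $e^{-2x}$ bound and deduce the $e^{2x}$ bound from the fact that $\sigma$ is odd, so $\abs{\sigma^{(m)}}$ is even.)

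The only genuinely delicate point is the combinatorial estimate $\sum_j\abs{a_{m,j}}\le m!$: one has to get the polynomial recursion for $A_m$ exactly right — in particular the $(mu-1)$ factor and the $u(1+u)A_m'$ term — and then verify that the triangle-inequality bound on the coefficient recursion collapses to the clean multiplicative factor $m+1$. The remaining ingredients (the change of variables, the quotient-rule induction, and the elementary inequality $\frac{t}{(1+t)^2}\le t$) are routine.
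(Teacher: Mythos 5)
Your proof is correct, and it takes a genuinely different route from the paper. The paper invokes a closed-form expression for $\tanh^{(m)}$ in terms of Stirling numbers of the second kind (citing Boyadzhiev), bounds ${m \brace k}\le k^m/k!$, and then uses $\abs{1+\sigma(x)}\le 2\min\{1,e^{2x}\}$ together with the symmetry $\abs{\sigma^{(m)}(-x)}=\abs{\sigma^{(m)}(x)}$. You instead substitute $u=e^{-2x}$ and iterate the Euler operator $\theta=u\,\tfrac{d}{du}$ on $\tfrac{1}{1+u}$, obtaining $\theta^m[\tfrac{1}{1+u}]=(-1)^m u A_m(u)/(1+u)^{m+1}$ and controlling $\sum_j\abs{a_{m,j}}\le m!$ via the coefficient recursion; I checked the recursion $A_{m+1}=(mu-1)A_m-u(1+u)A_m'$, the translated coefficient identity $a_{m+1,k}=(m-k+1)a_{m,k-1}-(k+1)a_{m,k}$, the positivity of the factors on the valid index range, and the telescoping to $(m+1)\sum_j\abs{a_{m,j}}$ — all are right. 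What your approach buys: it is fully self-contained (no external derivative formula needed), it produces both exponential decays simultaneously through the clean identity $\tfrac{u}{(1+u)^2}=\tfrac{v}{(1+v)^2}$, and it actually yields the sharper constant $2^{m+1}m!$, which you then relax to $(2m)^{m+1}$ only at the very end. What the paper's route buys is brevity, at the cost of relying on a cited identity. The polynomials $A_m$ you construct are essentially Eulerian-type polynomials, which is consistent with the Stirling-number formula appearing in the reference the paper uses.
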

\begin{proof}
In \cite{boyadzhiev2009derivative}, the following formula for the derivative of the hyperbolic tangent is proven, 
\begin{equation}\label{eq:der-tanh}
    \sigma^{(m)}(x) = (-2)^m (\sigma(x)+1) \sum_{k=0}^m \frac{k!}{2^k}{m \brace k}(\sigma(x)-1)^k, 
\end{equation}
where ${m \brace k}$ denote Stirling numbers of the second kind, for which it holds that ${m \brace k}\leq \frac{k^m}{k!}$. This then gives us
\begin{equation}\label{eq:tanh-bound1}
    \abs{\sigma^{(m)}(x)} \leq 2^m \abs{1+\sigma(x)} \sum_{k=0}^m k^m \leq 2^m m^{m+1} \abs{1+\sigma(x)} \leq (2m)^{m+1} \exp(2x), 
\end{equation}
as $ \sum_{k=0}^m k^m \leq m\cdot m^m \leq  m^{m+1}$. 
Furthermore one can note that $\sigma^{(m)}(-x) = - \sigma^{(m)}(x)$, which gives us
\begin{equation}
    \abs{\sigma^{(m)}(x)} \leq 2^m m^{m+1}\abs{1-\sigma(x)} \leq (2m)^{m+1} \exp(-2x). 
\end{equation}
The statement follows easily. 
\end{proof}

\begin{lemma}\label{lem:alpha-growth}
The conditions stated in \eqref{eq:alpha} for $k>0$ are satisfied if
\begin{equation}
    \alpha = N \max\left\{R,\ln(\frac{(2k)^{k+1}(Nk)^k}{e^k\epsilon})\right\}.
\end{equation}
\end{lemma}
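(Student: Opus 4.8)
The plan is to verify the three requirements of \eqref{eq:alpha} one at a time for the value $\beta := \alpha/N = \max\bigl\{R,\ \ln\bigl((2k)^{k+1}(Nk)^k/(e^k\epsilon)\bigr)\bigr\}$. The first, $\alpha/N\ge R$, is immediate from the definition of $\beta$ as a maximum. The only property of the logarithm I will use is the one-sided estimate $e^{-\beta}\le e^k\epsilon/\bigl((2k)^{k+1}(Nk)^k\bigr)$, which holds whether or not $R$ realises the maximum, so no case distinction between the two arguments of $\max$ is ever needed; I also record that $\beta\ge R>0$, hence $\min\{e^{-2\beta},e^{2\beta}\}=e^{-2\beta}$ and Lemma \ref{lem:bound-der-tanh} applies in the form $\abs{\sigma^{(m)}(\beta)}\le (2m)^{m+1}e^{-2\beta}$.

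For the third and hardest requirement, I would fix $1\le m\le k$ and start from $\alpha^m\abs{\sigma^{(m)}(\alpha/N)}=N^m\beta^m\abs{\sigma^{(m)}(\beta)}\le N^m(2m)^{m+1}\beta^m e^{-2\beta}$. The main obstacle is the polynomial factor $\beta^m$ competing against the exponential decay: I propose to peel off one factor $e^{-\beta}$ and use the elementary inequality $\sup_{t>0}t^m e^{-t}=(m/e)^m$, which gives $\beta^m e^{-2\beta}\le (m/e)^m e^{-\beta}$ and hence $\alpha^m\abs{\sigma^{(m)}(\alpha/N)}\le (Nm/e)^m(2m)^{m+1}e^{-\beta}$. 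It then remains to make this prefactor uniform in $m$; a one-line computation shows $\frac{d}{dm}\log\!\bigl((Nm/e)^m(2m)^{m+1}\bigr)=\log(2Nm^2)+1+1/m>0$ for $m\ge1$, so the prefactor is non-decreasing on $[1,k]$ and therefore bounded by $(Nk/e)^k(2k)^{k+1}=(2k)^{k+1}(Nk)^k/e^k$. Combining this with $e^{-\beta}\le e^k\epsilon/\bigl((2k)^{k+1}(Nk)^k\bigr)$ yields precisely $\alpha^m\abs{\sigma^{(m)}(\alpha/N)}\le\epsilon$ for every $1\le m\le k$.

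For the second requirement I would use the elementary bound $1-\sigma(x)=1-\tanh(x)=2/(e^{2x}+1)\le 2e^{-2x}$, so $1-\sigma(\alpha/N)\le 2e^{-2\beta}=2(e^{-\beta})^2\le 2C_k^2\epsilon^2$ with $C_k:=e^k/\bigl((2k)^{k+1}(Nk)^k\bigr)$. Since $k\mapsto e^k/(2k)^{k+1}$ is decreasing (the ratio of consecutive terms equals $\tfrac{e}{2(k+1)}(k/(k+1))^{k+1}<1$), one has $C_k\le e/4$ and hence $2C_k^2\le e^2/8<1$; for $\epsilon\le1$ this bounds the right-hand side by $2C_k^2\epsilon<\epsilon$, while for $\epsilon>1$ the requirement is trivial since $1-\tanh<1$. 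Assembling the three estimates completes the argument; the only slightly delicate point, as noted, is the balancing of $\beta^m$ against $e^{-2\beta}$ in the third requirement, everything else being routine.
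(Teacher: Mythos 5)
Your proof is correct and follows essentially the same route as the paper: both peel off one factor of $e^{-\alpha/N}$, bound the remaining $\alpha^m e^{-\alpha/N}$ by its supremum $(Nm/e)^m$, and then invoke $e^{-\alpha/N}\le e^k\epsilon/\bigl((2k)^{k+1}(Nk)^k\bigr)$ from the definition of $\alpha$. You are somewhat more explicit than the paper on two points it leaves implicit --- the uniformity over $1\le m\le k$ (via the monotonicity of the prefactor in $m$, which avoids any assumption that $\alpha\ge 1$) and the verification of the second condition $1-\sigma(\alpha/N)\le\epsilon$ --- but these are refinements of the same argument rather than a different one.
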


\begin{proof}
The first condition of \eqref{eq:alpha} is trivially satisfied when $\alpha$ is chosen as in the statement. From Lemma \ref{lem:bound-der-tanh}, it follows that 
\begin{equation}
    \alpha^k (2k)^{k+1}\exp(-2\alpha/N) \leq \epsilon
\end{equation}
is a sufficient condition that implies the other conditions of \eqref{eq:alpha}. Using $\max_{\alpha>0} \alpha^k \exp(-\alpha/N) \leq (Nk)^k \exp(-k)$ we find that
\begin{equation}
    \alpha^k \exp(-2\alpha/N) = \alpha^k \exp(-\alpha/N) \exp(-\alpha/N) \leq (Nk)^k \exp(-k) \exp(-\alpha / N).
\end{equation}
The statement follows directly. 
\end{proof}

\begin{lemma}\label{lem:leibniz}
Let $d\in\mathbb{N}$, $k\in\mathbb{N}_0$, $\Omega\subset \mathbb{R}^d$ and $f,g\in W^{k,\infty}(\Omega)$. Then it holds that
\begin{equation}
    \ck{fg}\leq 2^k \ck{f}\ck{g}.
\end{equation}
\end{lemma}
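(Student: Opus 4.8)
The plan is to invoke the general Leibniz rule for (weak) derivatives of a product and then bound the resulting combinatorial factor. First I would recall that for $f,g\in W^{k,\infty}(\Omega)$ and any multi-index $\alpha$ with $\abs{\alpha}\le k$ one has the identity
\begin{equation}
    D^\alpha(fg) = \sum_{\beta\le\alpha}\binom{\alpha}{\beta} D^\beta f\, D^{\alpha-\beta}g
\end{equation}
in the sense of weak derivatives. This is legitimate because every factor $D^\beta f$ and $D^{\alpha-\beta}g$ (with $\abs{\beta},\abs{\alpha-\beta}\le\abs{\alpha}\le k$) lies in $L^\infty(\Omega)\subset L^1_{\mathrm{loc}}(\Omega)$, so all the products appearing are locally integrable; the identity then follows by induction on $\abs{\alpha}$ from the one-variable case $\abs{\alpha}=1$ (equivalently, by a standard mollification argument). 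In particular this already shows $fg\in W^{k,\infty}(\Omega)$.

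Next, fix a multi-index $\alpha$ with $\abs{\alpha}\le k$. Taking $L^\infty(\Omega)$-norms in the Leibniz identity and using $\norm{D^\beta f}_{L^\infty(\Omega)}\le\ck{f}$ and $\norm{D^{\alpha-\beta}g}_{L^\infty(\Omega)}\le\ck{g}$ for every $\beta\le\alpha$, I obtain
\begin{equation}
    \norm{D^\alpha(fg)}_{L^\infty(\Omega)} \le \Bigl(\sum_{\beta\le\alpha}\binom{\alpha}{\beta}\Bigr)\ck{f}\,\ck{g}.
\end{equation}
The combinatorial sum factorizes coordinatewise, and by the one-dimensional binomial theorem applied in each coordinate,
\begin{equation}
    \sum_{\beta\le\alpha}\binom{\alpha}{\beta} = \prod_{i=1}^d \sum_{\beta_i=0}^{\alpha_i}\binom{\alpha_i}{\beta_i} = \prod_{i=1}^d 2^{\alpha_i} = 2^{\abs{\alpha}} \le 2^k.
\end{equation}

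Finally, since $\ck{fg} = \max_{\abs{\alpha}\le k}\norm{D^\alpha(fg)}_{L^\infty(\Omega)}$, taking the maximum of the bound above over all $\alpha$ with $\abs{\alpha}\le k$ yields $\ck{fg}\le 2^k\ck{f}\ck{g}$, which is the claim. The only genuinely non-routine point in this argument is the justification of the Leibniz rule for weak (rather than classical) derivatives; but since every factor that occurs is bounded, hence locally integrable, this reduces to a standard density/approximation fact, and the remaining steps are elementary estimates.
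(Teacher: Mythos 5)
Your proof is correct and takes the same route as the paper, which simply states that the lemma ``follows directly from the general Leibniz rule''; you have merely filled in the details (the validity of the Leibniz identity for weak derivatives, the coordinatewise binomial bound $\sum_{\beta\le\alpha}\binom{\alpha}{\beta}=2^{\abs{\alpha}}\le 2^k$, and the maximum over $\abs{\alpha}\le k$). Nothing further is needed.
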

\begin{proof}
The statement follows directly from the general Leibniz rule.
\end{proof}

\begin{lemma}\label{lem:faa-di-bruno}
Let $d,m,n\in\mathbb{N}$, $\Omega_1\subset \mathbb{R}^d$, $\Omega_2\subset \mathbb{R}^m$, $f\in C^n(\Omega_1; \Omega_2)$ and $g\in C^n(\Omega_2; \mathbb{R})$. Then it holds that 
\begin{equation}
    \norm{g \circ f}_{W^{n,\infty}} \leq 16(e^2n^{4}md^2)^{n} \norm{g}_{W^{n,\infty}} \max_{1\leq i\leq m}\norm{(f)_i}_{W^{n,\infty}}^n.
\end{equation}
\end{lemma}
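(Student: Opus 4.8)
The plan is to reduce the $W^{n,\infty}$ bound to a pointwise estimate on each partial derivative $D^\alpha(g\circ f)$ with $\abs{\alpha}\le n$, and then to control these via the multivariate Faà di Bruno formula together with a crude combinatorial count of the resulting terms. Since $\norm{g\circ f}_{W^{n,\infty}} = \max_{\abs{\alpha}\le n}\norm{D^\alpha(g\circ f)}_{L^\infty(\Omega_1)}$, it suffices to bound each $\norm{D^\alpha(g\circ f)}_{L^\infty}$ by the right-hand side. For $\alpha=0$ this is immediate since $\norm{g\circ f}_{L^\infty}\le\norm{g}_{L^\infty}\le\norm{g}_{W^{n,\infty}}$; in what follows we may and do assume $\max_i\norm{(f)_i}_{W^{n,\infty}}\ge 1$, so that this also dominates the claimed bound.

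For $1\le\abs{\alpha}=\ell\le n$ I would write $D^\alpha(g\circ f)$ as a finite sum of terms, each of the form $c\cdot\big((D^\lambda g)\circ f\big)\prod_{r=1}^{\abs{\lambda}}D^{\beta_r}(f)_{j_r}$, where $1\le\abs{\lambda}\le\ell$, each $\beta_r\in\mathbb{N}_0^d$ has $\abs{\beta_r}\ge 1$, the $\beta_r$ sum with multiplicity to $\alpha$, each $j_r\in\{1,\dots,m\}$, and $c$ is a combinatorial coefficient bounded by $\alpha!\le\ell!$. This is exactly the multivariate Faà di Bruno formula (of Constantine and Savits), and it can also be obtained by inducting on $\abs{\alpha}$ using the chain rule $\partial_{x_i}(g\circ f)=\sum_{j=1}^m\big((\partial_j g)\circ f\big)\,\partial_{x_i}(f)_j$ together with the Leibniz rule.

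The bulk of the work — and the main obstacle — is the combinatorial bookkeeping: counting the terms and absorbing their multiplicities and coefficients into the factor $16(e^2n^4md^2)^n$. Here I would coarsen the multi-index data of a term to the underlying set-partition data: the number of terms with fixed $\abs{\lambda}=p$ is at most (number of set partitions of an $\ell$-element set into $p$ blocks)$\times m^{p}$, and summing over $p\le\ell$ the total count is bounded by the Bell number $B_\ell$ times $m^\ell$ times a polynomial-in-$d$ factor accounting for the at most $d^{\ell}$ admissible multi-indices $\beta_r$ (and at most $m^\ell$ admissible $\lambda$). Using the generous bounds $B_\ell\le\ell^\ell$ and $\ell!\le\ell^\ell$, all of these factors, together with the coefficient bound $c\le\ell!$, collect into $(e^2\ell^4md^2)^\ell\le(e^2n^4md^2)^n$, with the leading constant $16$ absorbing the residual slack; the comfortable exponent $n^4$ is precisely what lets these crude estimates suffice.

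It then remains to bound each surviving term: $\abs{(D^\lambda g)\circ f}\le\abs{g}_{W^{\abs{\lambda},\infty}}\le\norm{g}_{W^{n,\infty}}$, and $\abs{D^{\beta_r}(f)_{j_r}}\le\norm{(f)_{j_r}}_{W^{n,\infty}}\le\max_i\norm{(f)_i}_{W^{n,\infty}}$; since there are $\abs{\lambda}\le\ell\le n$ such factors and the maximum is $\ge 1$, their product is $\le\max_i\norm{(f)_i}_{W^{n,\infty}}^{\,n}$. Combining the term count with this per-term bound and taking the maximum over $\abs{\alpha}\le n$ yields $\norm{D^\alpha(g\circ f)}_{L^\infty}\le 16(e^2n^4md^2)^n\,\norm{g}_{W^{n,\infty}}\,\max_i\norm{(f)_i}_{W^{n,\infty}}^{\,n}$, which is the assertion. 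I expect no conceptual difficulty beyond making the combinatorial constant explicit enough to fit inside $16(e^2n^4md^2)^n$; a sharper accounting is possible but unnecessarily painful, and the stated bound is deliberately loose to avoid it.
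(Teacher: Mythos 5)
Your strategy is the same as the paper's: invoke the multivariate Fa\`a di Bruno formula of Constantine--Savits, bound each summand by $\norm{g}_{W^{n,\infty}}\max_i\norm{(f)_i}_{W^{n,\infty}}^{n}$, and absorb a deliberately crude count of the summands and their coefficients into the prefactor $16(e^2n^4md^2)^n$. The only genuine difference is how the count is organized: the paper bounds the number of admissible multi-index collections in its set $p(\nu,\lambda)$ by the cardinalities $\abs{P_{n,(m+1)n}}$ and $\abs{P_{n,(d+1)n}}$ (via Lemma \ref{lem:size-pqn} and Stirling), whereas you coarsen to set partitions and use Bell numbers; both land in the same ballpark. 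Two caveats. First, your tally double-counts in $m$: you charge $m^{p}$ for assigning a component index to each block \emph{and} another $m^{\ell}$ for the choice of $\lambda$, giving $m^{2\ell}$ against a budget of only $m^{\ell}$ in $(e^2\ell^4md^2)^{\ell}$, so the ``collect into'' step as written fails once $m\gtrsim e^2\ell^2 d$. The fix is immediate --- $\lambda$ is determined by the block-to-component assignment, so the extra $m^{\ell}$ must simply be dropped, after which $B_\ell\cdot\ell!\cdot m^{\ell}d^{\ell}\leq \ell^{2\ell}m^{\ell}d^{\ell}$ fits with room to spare. Second, your reduction to the case $\max_i\norm{(f)_i}_{W^{n,\infty}}\geq 1$ is not actually without loss of generality: a term with $\abs{\lambda}<n$ carries only $\abs{\lambda}$ factors of $f$, so when this norm is below one the claimed right-hand side need not dominate (and the lemma as stated can fail, e.g.\ for $g(y)=y$ and $f$ with tiny norm). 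The paper's own proof makes the identical silent assumption in the step $\prod_j(f_{l_j})^{k_j}\leq\max_i\norm{(f)_i}_{W^{n,\infty}}^{n}$, so you are no worse off --- indeed more honest for flagging it --- but the normalization is a genuine hypothesis being used, not a harmless convention.
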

\begin{proof}
Let $\nu\in\mathbb{N}^d$ with $\abs{\nu}=n$. We use the multivariate Faà di Bruno formula \cite{constantine1996multivariate}, 
\begin{equation}
    D^\nu (g\circ f) = \sum_{1\leq \abs{\lambda}\leq n} D^\lambda g \sum_{p(\nu,\lambda)} (\nu !) \prod_{j=1}^n  \frac{(f_{l_j})^{k_j}}{k_j! (l_j!)^{\abs{k_j}}},
\end{equation}
where $(f_\mu)_i = D^{\mu} (f)_i$ for $1\leq i \leq m$ and the set $p(\nu,\lambda)$ is defined as
\begin{align}
    \begin{split}
p(\nu,\lambda) = \{&(\kappa,\ell):=(k_1,\ldots,k_n; l_1, \ldots, l_n):\text{ for some } 1\leq s\leq n, \\
&k_i=0 \text{ and } l_i=0 \text{ for } 1\leq i\leq n-s; \abs{k_i}>0 \text{ for } n-s+1\leq i\leq n; \\
&\text{ and } 0 \prec l_{n-s+1} \prec \cdots \prec l_n \text{ are such that }\\
&\sum_{i=1}^n k_i=\lambda, \sum_{i=1}^n \abs{k_i}l_i = \nu
\}, 
    \end{split}
\end{align}
where $a\prec b$ either means that $\abs{a}<\abs{b}$ or $a<b$ according to lexicographic ordering; furthermore the vectors $k_i$ are $m$-dimensional and the $l_i$ are $d$-dimensional. From the stated conditions, it follows directly that $\sum_{i=1}^n\abs{k_i}\leq n$ and $\sum_{i=1}^n\abs{l_i}\leq n$. Next, we bound the complexity of $p(\nu,\lambda)$. From $\sum_{i=1}^n\abs{k_i}\leq n$, it follows that the number of $\kappa$ is bounded above by $\abs{P_{n,(m+1)n}}$, which can in turn be bounded by $\sqrt{\pi}e^n(mn)^n$ by Lemma \ref{lem:size-pqn}. Similarly, it follows that the number of $\ell$ is bounded above by $\abs{P_{n,(d+1)n}}$, which can in turn be bounded by $\sqrt{\pi}e^n(dn)^n$ by Lemma \ref{lem:size-pqn}. Therefore, $\abs{p(\nu,\lambda)} \leq \pi (e^2n^2md)^n$. Finally, we can make the estimates that $\abs{\{\lambda: 1\leq \abs{\lambda}\leq n\}}\leq \abs{P_{n,d+1}}\leq \sqrt{\pi}e^nd^n$, $D^\lambda g \leq \norm{g}_{W^{n,\infty}}$, $\nu! \leq n!$ and $\prod_{j=1}^n (f_{l_j})^{k_j} \leq \max_{1\leq i\leq m} \norm{(f)_i}_{W^{n,\infty}}^n$. Together with Stirling's approximation, this yields
\begin{align}
\begin{split}
     \norm{D^\nu (g\circ f)}_\infty &\leq \sqrt{\pi} e^{n} d^n  \norm{g}_{W^{n,\infty}} \cdot \pi (e^2n^2md)^n \cdot n! \cdot  \max_{1\leq i\leq m}\norm{(f)_i}_{W^{n,\infty}}^n \\ &\leq 16(e^2n^{4}md^2)^{n} \norm{g}_{W^{n,\infty}} \max_{1\leq i\leq m}\norm{(f)_i}_{W^{n,\infty}}^n. 
\end{split}
\end{align}
\end{proof}

\begin{lemma}[Bramble-Hilbert]\label{lem:bramble-hilbert2}
Let $\Omega\subset \mathbb{R}^d$ be an open and bounded set of diameter $0<h<e^{-1/2}d^{-3/2}$ which is star-shaped with respect to every point in an open ball $B\subset \Omega$ with diameter $\rho h$. Then for every $f\in W^{s,\infty}(\Omega)$ there exists a polynomial $\hat{f}$ of degree at most $s-1$ such that for any $k \in \mathbb{N}_0$ with $k<s$ it holds that, 
\begin{equation}
    \norm{f-\hat{f}}_{W^{k,\infty}(\Omega)} \leq \frac{\sqrt{s}\pi^{1/4}(d\sqrt{de} h)^{s-k}}{(s-k-1)!}   \abs{f}_{W^{s,\infty}(\Omega)}.
\end{equation}
\end{lemma}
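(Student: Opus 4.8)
The plan is to carry out the classical Bramble--Hilbert construction via the \emph{averaged Taylor polynomial}, keeping every constant explicit. Fix a nonnegative $\phi\in C_c^\infty(B)$ with $\int_B\phi=1$ and set
\begin{equation}
    \hat f(x):=\int_B \phi(y)\sum_{\abs{\alpha}\le s-1}\frac{D^\alpha f(y)}{\alpha!}(x-y)^\alpha\,dy,
\end{equation}
which is a polynomial of degree at most $s-1$. First I would record the derivative identity: since only the monomials $(x-y)^\alpha$ depend on $x$, differentiating under the finite sum and substituting $\gamma=\alpha-\beta$ shows that, for every multi-index $\beta$ with $\abs{\beta}\le s$, $D^\beta\hat f$ equals the averaged Taylor polynomial of order $s-\abs{\beta}$ of $D^\beta f$. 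This step needs no regularity beyond $f\in W^{s,\infty}(\Omega)$ and is a one-line computation on the monomials $(x-y)^\alpha$.

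Next I would derive the remainder representation. Because $\Omega$ is star-shaped with respect to every point of $B$, the segment $\{y+t(x-y):t\in[0,1]\}$ lies in $\Omega$ for all $x\in\Omega$ and $y\in B$. Applying Taylor's formula with integral remainder to $t\mapsto D^\beta f(y+t(x-y))$ --- legitimate for a.e.\ $y\in B$ by the absolute-continuity-on-lines property of Sobolev functions (equivalently, via the embedding $W^{m,\infty}(\Omega)\hookrightarrow C^{m-1,1}(\overline{\Omega})$) --- and averaging against $\phi(y)\,dy$ yields, with $m:=s-\abs{\beta}\ge1$,
\begin{equation}
    D^\beta(f-\hat f)(x)=m\sum_{\abs{\alpha}=m}\frac{1}{\alpha!}\int_B\phi(y)(x-y)^\alpha\int_0^1(1-t)^{m-1}D^{\alpha+\beta}f\big(y+t(x-y)\big)\,dt\,dy.
\end{equation}
Crucially this involves only derivatives of $f$ of total order at most $s$, which is exactly why the averaged rather than pointwise Taylor polynomial is used.

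The $L^\infty$ estimate is then routine bookkeeping: bounding $\abs{(x-y)^\alpha}\le\abs{x-y}^m\le(\mathrm{diam}\,\Omega)^m=h^m$, $\int_0^1(1-t)^{m-1}\,dt=1/m$, $\norm{D^{\alpha+\beta}f}_{L^\infty(\Omega)}\le\abs{f}_{W^{s,\infty}(\Omega)}$ (as $\abs{\alpha+\beta}=s$), $\int_B\phi=1$, and using the multinomial identity $\sum_{\abs{\alpha}=m}\tfrac{1}{\alpha!}=\tfrac{d^m}{m!}$ (alternatively, bounding the number of such $\alpha$ by $\abs{P_{m,d}}$ via Lemma~\ref{lem:size-pqn} and invoking Stirling to produce the factor $\pi^{1/4}$), one obtains
\begin{equation}
    \norm{D^\beta(f-\hat f)}_{L^\infty(\Omega)}\le\frac{d^m}{m!}\,h^m\,\abs{f}_{W^{s,\infty}(\Omega)}\le\frac{\sqrt m\,\pi^{1/4}\,(d\sqrt{de}\,h)^m}{(m-1)!}\,\abs{f}_{W^{s,\infty}(\Omega)},
\end{equation}
the last inequality merely inserting slack ($1/m\le1$, $1\le\sqrt m\,\pi^{1/4}$, $d^m\le(d\sqrt{de})^m$). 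Taking the maximum over $\abs{\beta}\le k$ and using the hypothesis $h<e^{-1/2}d^{-3/2}$, i.e.\ $d\sqrt{de}\,h<1$, the term with $\abs{\beta}=k$ dominates all others, since for $\abs{\beta}=j<k$ one has $(d\sqrt{de}h)^{s-j}\le(d\sqrt{de}h)^{s-k}$, $(s-j-1)!\ge(s-k-1)!$ and $\sqrt{s-j}\le\sqrt s$; this gives the claimed bound.

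The only genuinely delicate point is making the remainder representation rigorous for merely $W^{s,\infty}$ (rather than $C^s$) data: one must verify, by Fubini together with the ACL characterisation of Sobolev functions, that for a.e.\ $y\in B$ the map $t\mapsto D^\beta f(y+t(x-y))$ is $C^{s-\abs{\beta}-1}$ in $t$ with absolutely continuous top derivative, so that the integral remainder is valid and the a.e.-defined derivatives $D^\alpha f$ with $\abs{\alpha}=s$ legitimately appear under the integral sign. The required regularity and slicing are available because star-shapedness with respect to a ball forces $\Omega$ to be a John (hence Lipschitz) domain; everything else --- the derivative identity, the pointwise bounds, and the absorption of the lower-order terms --- is elementary.
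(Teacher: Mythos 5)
Your proof is correct, but it takes a different route from the paper's: the paper does not redo the averaged-Taylor-polynomial construction at all. Instead it quotes the penultimate inequality from Dur\'an's proof of the Bramble--Hilbert lemma (with $p=q=\infty$), namely
\begin{equation*}
    \abs{f-\hat f}_{W^{m,\infty}(\Omega)} \leq (s-m)\Big(\sum_{\beta\in P_{s-m,d}}(\beta!)^{-2}\Big)^{1/2} h^{s-m}\sqrt{\abs{P_{s-m,d}}}\,\abs{f}_{W^{s,\infty}(\Omega)},
\end{equation*}
and then spends its effort on combinatorial bookkeeping: bounding $\sqrt{\abs{P_{s-m,d}}}$ via Lemma \ref{lem:size-pqn}, bounding $\sum(\beta!)^{-2}$ by $(2d)^{2(s-m)}/(2(s-m))!$ via the multinomial theorem, and using $(2(s-m))!\geq 4^{s-m}(s-m)((s-m-1)!)^2$ to arrive at the per-order bound $\sqrt{s-m}\,\pi^{1/4}(d\sqrt{de}\,h)^{s-m}/(s-m-1)!$, before majorizing over $0\le m\le k$ exactly as you do (this is where the hypothesis $d\sqrt{de}\,h<1$ enters, in both arguments identically). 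Your self-contained $L^\infty$ computation actually buys something: working directly with the sup norm avoids the Cauchy--Schwarz losses baked into Dur\'an's $L^p$ estimate, and your intermediate bound $d^mh^m/m!$ is strictly sharper than the stated one --- the factors $\sqrt{m}$, $\pi^{1/4}$ and $(de)^{m/2}$ are pure slack you insert only to match the lemma. What the paper's route buys is brevity and the outsourcing of the one genuinely delicate point you correctly flag, namely the validity of the integral-remainder Taylor expansion along segments for merely $W^{s,\infty}$ data; your ACL/Fubini sketch is adequate (a mollification-and-limit argument is the cleanest way to nail it down), though the parenthetical claim that star-shapedness with respect to a ball makes $\Omega$ Lipschitz is not needed for the argument and is best omitted.
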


\begin{proof}
By setting $p=q = \infty$ in the penultimate equation in the proof of the main theorem in \cite{duran1983polynomial} (note that this reference uses a different definition of Sobolev norm), it follows that there exists a polynomial $\hat{f}$ of degree at most $s-1$ such that for $0\leq m < s$ it holds that,
\begin{equation}
    \abs{f-\hat{f}}_{W^{m,\infty}(\Omega)} \leq (s-m) \left(\sum_{\beta\in P_{s-m,d}} (\beta !)^{-2}\right)^{1/2} h^{s-m} \sqrt{\abs{P_{s-m,d}}}\abs{f}_{W^{s,\infty}(\Omega)}.
\end{equation}
Using Lemma \ref{lem:size-pqn}, we find that
$\sqrt{\abs{P_{s-m,d}}} \leq \pi^{1/4}(ed)^{(s-m)/2}$
and from the multinomial theorem it follows that
\begin{equation}
    \sum_{\beta\in P_{s-m,d}} (\beta !)^{-2} \leq \sum_{\beta'\in P_{2(s-m),2d}} (\beta' !)^{-1} = \frac{(2d)^{2(s-m)}}{(2(s-m))!}. 
\end{equation}
One can also calculate that $(2(s-m))! \geq 4^{s-m} (s-m) ((s-m-1)!)^2$. Combining the previous observations, we find
\begin{equation}
     \abs{f-\hat{f}}_{W^{m,\infty}(\Omega)} \leq \frac{\sqrt{s-m}(d\sqrt{de} h)^{s-m}}{(s-m-1)!}   \abs{f}_{W^{s,\infty}(\Omega)}.
\end{equation}
Majorizing over $0\leq m \leq k$ then gives the upper bound from the statement. 

\end{proof}

\begin{lemma}[Taylor's theorem]\label{lem:taylor}
Let $d,s \in \mathbb{N}$, $0<\delta<1/d$. Then for every $f\in C^{s}([-\delta,\delta]^d)$ there exists a polynomial $\hat{f}$ of degree at most $s-1$ such that for any $k \in \mathbb{N}_0$ with $k<s$ it holds that, 
\begin{equation}
    \norm{f-\hat{f}}_{W^{k,\infty}([-\delta,\delta]^d)} \leq \frac{(d\delta)^{s-k}}{(s-k)!}\abs{f}_{W^{s,\infty}([-\delta,\delta]^d)}
\end{equation}
\end{lemma}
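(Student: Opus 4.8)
The plan is to use the multivariate Taylor expansion of $f$ about the origin, truncated to order $s-1$, and then estimate the remainder in each Sobolev seminorm up to order $k$. Concretely, I would set
\begin{equation}
  \hat f(x) = \sum_{|\alpha| \le s-1} \frac{D^\alpha f(0)}{\alpha!}\, x^\alpha,
\end{equation}
which is a polynomial of degree at most $s-1$. The first step is to write, for each multi-index $\gamma$ with $|\gamma| \le k$, the derivative $D^\gamma(f - \hat f)$ and observe that $D^\gamma \hat f$ is precisely the order-$(s-1-|\gamma|)$ Taylor polynomial of $D^\gamma f$ about $0$ (the lower-order terms of the Taylor polynomial of $D^\gamma f$ survive differentiation, and the terms of $\hat f$ of degree $< |\gamma|$ are annihilated). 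Hence $D^\gamma(f-\hat f)$ is exactly the Taylor remainder of the function $D^\gamma f \in C^{s-|\gamma|}$ at order $s-1-|\gamma|$.

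The second step is to apply the Lagrange (or integral) form of the Taylor remainder to $D^\gamma f$: for $x \in [-\delta,\delta]^d$ there is $\xi$ on the segment from $0$ to $x$ with
\begin{equation}
  D^\gamma(f-\hat f)(x) = \sum_{|\beta| = s-|\gamma|} \frac{D^{\beta+\gamma} f(\xi)}{\beta!}\, x^\beta.
\end{equation}
Taking absolute values, bounding $|x^\beta| \le \delta^{s-|\gamma|}$, bounding each $|D^{\beta+\gamma} f(\xi)| \le \abs{f}_{W^{s,\infty}}$ (since $|\beta+\gamma| = s$), and using the multinomial identity $\sum_{|\beta| = s-|\gamma|} \frac{1}{\beta!} = \frac{d^{\,s-|\gamma|}}{(s-|\gamma|)!}$ gives
\begin{equation}
  \norm{D^\gamma(f-\hat f)}_{L^\infty} \le \frac{(d\delta)^{s-|\gamma|}}{(s-|\gamma|)!}\,\abs{f}_{W^{s,\infty}([-\delta,\delta]^d)}.
\end{equation}
The condition $\delta < 1/d$ guarantees $d\delta < 1$, so the bound is largest when $|\gamma|$ is largest; since the $W^{k,\infty}$ norm is the maximum over $0 \le |\gamma| \le k$ of these seminorms, majorizing over $|\gamma| \le k$ (i.e.\ taking the $|\gamma| = k$ term) yields the claimed estimate. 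One small point to handle: taking the maximum over $0 \le m \le k$ of $(d\delta)^{s-m}/(s-m)!$ — since $d\delta<1$ and $s-m$ decreases as $m$ increases, and $1/(s-m)!$ increases, one checks the worst term is indeed $m=k$; this is where $\delta<1/d$ is used.

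I do not expect a serious obstacle here — this is the standard Taylor-with-remainder argument and the only mildly delicate bookkeeping is (i) verifying that $D^\gamma \hat f$ is exactly the truncated Taylor polynomial of $D^\gamma f$, which follows from the identity $D^\gamma x^\alpha = \frac{\alpha!}{(\alpha-\gamma)!} x^{\alpha-\gamma}$ for $\alpha \ge \gamma$ and $D^\gamma x^\alpha = 0$ otherwise, matched against $D^{\alpha-\gamma} f(0)/(\alpha-\gamma)!$; and (ii) tracking the multinomial constant $\sum_{|\beta|=n} 1/\beta! = d^n/n!$. The regularity $f \in C^s$ (as opposed to merely $W^{s,\infty}$) is exactly what makes the pointwise Lagrange remainder form legitimate, so no mollification or density argument is needed.
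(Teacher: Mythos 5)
Your proposal is correct and follows essentially the same route as the paper's proof: the degree-$(s-1)$ Taylor polynomial at the origin, the observation that $D^\beta \hat f$ is the truncated Taylor polynomial of $D^\beta f$, the Lagrange remainder, the multinomial identity $\sum_{|\gamma|=n}1/\gamma! = d^n/n!$, and the use of $d\delta<1$ to majorize over the seminorm orders. No gaps.
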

\begin{proof}
We give a constructive proof. For $f\in C^{s}([-\delta,\delta]^d)$, we define the polynomial $\hat{f}$ as
\begin{equation}
    \hat{f}(x) = \sum_{\abs{\alpha}\leq s-1} \frac{D^\alpha f(0)}{\alpha!} x^\alpha. 
\end{equation}
Then take $\beta\in\mathbb{N}_0^d$ with $\abs{\beta}\leq k$. It then holds that
\begin{equation}
    D^\beta \hat{f}(x) = \sum_{\substack{\abs{\alpha}\leq s-1\\ \alpha\geq \beta}} \frac{D^\alpha f(0)}{\alpha!}   \frac{\alpha!}{(\alpha-\beta)!}x^{\alpha-\beta} = \sum_{\abs{\gamma}\leq s-1-\abs{\beta}} \frac{D^\gamma D^\beta f(0)}{\gamma!} x^\gamma.
\end{equation}
For $x\in[-\delta,\delta]^d$, Taylor's theorem guarantees the existence of a constant $c\in(0,1)$ such that
\begin{equation}
    D^\beta f(x) = \sum_{\abs{\gamma}\leq s-1-\abs{\beta}} \frac{D^\gamma D^\beta f(0)}{\gamma!} x^\gamma + \sum_{\abs{\gamma} = s-\abs{\beta}} \frac{D^\gamma D^\beta f(cx)}{\gamma!} x^\gamma. 
\end{equation}
The previous equalities, together with the multinomial theorem, then prove that
\begin{equation}
    \norm{D^\beta f-D^\beta \hat{f}}_{L^\infty([-\delta,\delta]^d)} \leq C_s \sum_{\abs{\gamma} = s-\abs{\beta}} \frac{\delta^{\abs{\gamma}} }{\gamma!} = \frac{C_s (d\delta)^{s-\abs{\beta}}}{(s-\abs{\beta})!},
\end{equation}
where $C_s := \abs{f}_{W^{s,\infty}([-\delta,\delta]^d)} $. Under the assumption that $\delta<1/d$ we then can conclude that
\begin{equation}
     \norm{f-\hat{f}}_{W^{k,\infty}([-\delta,\delta]^d)} \leq \frac{C_s (d\delta)^{s-k}}{(s-k)!}.
\end{equation}
\end{proof}

\bibliography{ref}

\end{document}